\theoremstyle{plain}
\newtheorem{thm}{Theorem}
\newtheorem{lem}{Lemma}
\newtheorem{Bob}{Definition}
\newtheorem{prop}{Proposition}
\theoremstyle{remark}
\newtheorem{claim}{Claim}
\newtheorem{cor}{Corollary}
\sloppy \setlength{\parskip}{1 mm}
\title[Ergodic homogeneous algorithms]{Ergodic  homogeneous multidimensional  continued fraction algorithms}
\begin{document}

\maketitle

\centerline{ Jonathan Chaika \footnote{Partially supported by NSF grant DMS 1004372.}}
\smallskip
{\footnotesize
 \centerline{Departament of Mathematics, University of Chicago}
   \centerline{411 Eckhart Hall, 5734 S. University Avenue, Chicago, Illinois 60637, USA}
   \centerline{jonchaika@math.uchicago.edu}
}

\medskip

\centerline{ Arnaldo Nogueira\footnote{Partially supported by  ANR Perturbations.\\
{\it{Mathematical Subject Classification (2000)}: 11K55, 28D99.}}}
\smallskip
{\footnotesize
 \centerline{Institut de Math\'ematiques de Luminy,  Aix-Marseille Universit\'e}
 \centerline {163, avenue de Luminy - Case 907, 13288 Marseille Cedex 9, France}
 \centerline{arnaldo.nogueira@univ-amu.fr}} 

\noindent 
\today\\\

\begin{abstract}
\noindent
Homogeneous continued fraction algorithms are multidimensional generalizations of the classical Euclidean algorithm, 
the dissipative map
$$
(x_1,x_2) \in \mathbb{R}_+^2 \longmapsto
\left\{ \begin{array}{ll}         
         (x_1 - x_2, x_2), & \mbox{if $x_1 \geq x_2$} \\
         (x_1, x_2 - x_1), & \mbox{otherwise.}
\end{array}
\right.
$$
We focus on those which 
 act piecewise linearly on finitely many copies of positive cones which we call Rauzy induction type algorithms. 
 In particular, a variation  Selmer algorithm  belongs to this class. 
We prove that Rauzy induction type algorithms, as well as Selmer algorithms, are ergodic with respect to Lebesgue measure. 
\end{abstract}

\section{ Introduction}

\noindent
Here we study the ergodic properties of homogeneous continued fraction algorithms. 
We have two motivations. First to prove the ergodicity of multidimensional continued fraction algorithms, among them  Selmer algorithm. Second, to provide a proof of Veech's result \cite{metric} that (non-normalized) Rauzy induction algorithm acting on the space of interval exchange transformations 
is ergodic with respect to Lebesgue measure, without appealing to zippered rectangles and Teichm\"{u}ller theory, whose set up cannot be extended to other algorithms. In particular our proof works for any  Rauzy induction type algorithm.
We recall that our result  will imply that any normalization of all these algorithms is also ergodic. 

Considering our first motivation: these algorithms have been studied with the hopes of advancing classical results on diophantine approximation from dimension 1 to higher dimensions. 
 It is likely that the methods of this paper could be used to prove some diophantine results. Indeed, Chaika used similar methods to prove \cite[Theorem 9]{iet kurz}.
It seems that the ergodicity of these algorithms can be used to prove some multidimensional diophantine inhomogeneous approximation results as in Laurent and Nogueira \cite{lan} for the  one-dimensional case.

Among references about continued fraction algorithms, we mention  \cite{pu},  \cite{lag},  \cite{bgu} and  \cite{sc}.   Beyond diophantine approximation, we recall that the Jacobi-Perron algorithm was introduced  with the aim of obtaining an extension of Lagrange's theorem which characterizes quadratic surds using the classical continued fractions. These algorithms also appear naturally  in  situations which go beyond Dynamical Systems and Number Theory. For example, in \cite{km} an algorithm is used to describe a model in Percolation Theory and in \cite{bch}  it is introduced an  algorithm that yields all infimum sequences of finitely many  letters ordered lexicographically.

Considering  our second motivation: Veech's original motivation for proving the ergodicity of Rauzy induction was to show that a certain invariant of  interval exchange transformations (the Sah-Arnoux-Fathi invariant) was not measurable. He related the ergodicity of this infinite measure dissipative action to a finite measure preserving action on a different finite measure space.
 He used extensive results from \cite{gauss} and \cite{metric} to set up the Hopf argument and show that the action on the finite measure space was ergodic.
  He then showed that the Rauzy induction was ergodic. 
  Our approach can be thought of as an analogy with the Hopf argument where induction contracts the past and expands the future. 
  We use probability theory and results of Kerchoff \cite{ker} which rely on elementary methods in place of ergodic theory.
    It is likely that one could use the theory of random Markov compacta developed by Bufetov
     in \cite{buf} to extend Veech's original proof of ergodicity more directly to the settings we address.

The Rauzy induction algorithm acts on a directed graph whose nodes are permutations belonging to a Rauzy class. We call 
{\it Rauzy induction type algorithm} any algorithm which acts on a directed graph which has the same features of a Rauzy graph.  
Throughout the paper all these algorithms will be denoted  by $\mathcal{I}$. 
Our main result is the following.

\begin{thm} \label{main}
Let $\mathcal{I}:\mathbb{R}^d_+  \times \mathcal{P}\rightarrow \mathbb{R}^d_+  \times \mathcal{P}$ be a Rauzy induction type algorithm, 
then the map $\mathcal{I}$ is ergodic  with respect to Lebesgue measure.\end{thm}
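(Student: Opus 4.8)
The plan is to show that any $\mathcal I$-invariant measurable set $E\subseteq\mathbb R^d_+\times\mathcal P$ with $\mathrm{Leb}(E)>0$ is conull, arguing in the spirit of the Hopf argument: forward iteration of $\mathcal I$ (``the future'') expands a cylinder onto an entire copy, the inverse branches (``the past'') contract copies into thin cylinders, and a sufficiently good geometry of these cylinders at infinitely many scales forces the density of $E$ at a single point to spread to the whole space. Because $\mathcal I$ is homogeneous of degree one and contracts each ray toward the origin, one first reduces to the projectivized algorithm $\widehat{\mathcal I}$ on the simplex $\Delta_{d-1}\times\mathcal P$ — equivalently, an $\mathcal I$-invariant set is a union of rays modulo a null set — and I would work from the start on $\Delta_{d-1}\times\mathcal P$. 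The Rauzy-graph hypothesis supplies the symbolic structure: to each admissible word $w$ of length $n$ in the finite, strongly connected graph $\mathcal P$ corresponds a cylinder $C_w\subseteq\Delta_{d-1}\times\{p\}$ on which $\widehat{\mathcal I}^n$ is a projective bijection onto a full copy $\Delta_{d-1}\times\{q(w)\}$, with inverse branch $\widehat A_w$ determined by a nonnegative integer matrix $A_w$ of determinant $\pm1$ (a product of $n$ generators); for a point $x$ I write $w_n(x)$ for its level-$n$ word and $C_n(x)$ for the corresponding cylinder. Iterating the invariance $\widehat{\mathcal I}^{-1}E=E$ gives $\widehat A_w\big(E\cap(\Delta_{d-1}\times\{q(w)\})\big)=E\cap C_w$ modulo a null set, and the same with $E^c$ in place of $E$.

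The geometric core is a distortion estimate governed by the \emph{balancedness} of the matrices $A_w$. Birkhoff's projective contraction theorem shows that once $A_w$ is strictly positive, $C_w$ has small Hilbert diameter, so cylinders along an itinerary shrink to a point; but this is useless for Lebesgue, since a cylinder of small Hilbert diameter may be a wildly eccentric sliver. The decisive input is the quantitative analysis of Kerckhoff \cite{ker}: it controls the Lebesgue measure of the set of points whose itinerary has not yet produced a \emph{balanced} matrix product (one all of whose entries lie within a fixed bounded ratio of one another), and, through a Borel--Cantelli argument, shows that Lebesgue-almost every itinerary is \emph{balanced-recurrent} — it produces such a product at infinitely many times $n_1<n_2<\cdots$. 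When $A_{w_{n_k}(x)}$ is balanced its entries all grow like $\|A_{w_{n_k}(x)}\|\to\infty$, so (i) $C_{n_k}(x)$ is comparable to a Euclidean ball at its own scale and shrinks regularly to $x$, and (ii) the distortion of $\widehat A_{w_{n_k}(x)}$ on $\Delta_{d-1}$ is $1+o(1)$ as $k\to\infty$ — i.e.\ at balanced times one has the asymptotically isometric distortion needed for a density argument.

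Granting this, the proof closes. Put $m_q:=\mathrm{Leb}\big(E\cap(\Delta_{d-1}\times\{q\})\big)/\mathrm{Leb}(\Delta_{d-1})$ for the finitely many $q\in\mathcal P$, and choose $x$ that is simultaneously a Lebesgue density point of $E$ and has a balanced-recurrent itinerary (a full-measure condition). Along the balanced times $n_k$, combining the invariance relation with (ii),
\[
\frac{\mathrm{Leb}\big(E\cap C_{n_k}(x)\big)}{\mathrm{Leb}\big(C_{n_k}(x)\big)}=\big(1+o(1)\big)\,m_{q(w_{n_k}(x))};
\]
since $C_{n_k}(x)\ni x$ shrinks regularly to the density point $x$, the left-hand side tends to $1$, hence $m_{q(w_{n_k}(x))}\to1$. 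As $m_q$ takes only finitely many values, some node $q_\star$ has $m_{q_\star}=1$, i.e.\ $E$ is conull in the copy over $q_\star$. Strong connectivity of $\mathcal P$, together with the standard fact that almost every itinerary visits every node, then lets one cover each copy $\Delta_{d-1}\times\{q\}$ modulo a null set by countably many cylinders that iterates of $\widehat{\mathcal I}$ map onto $\Delta_{d-1}\times\{q_\star\}$; pulling the null set $E^c\cap(\Delta_{d-1}\times\{q_\star\})$ back along the corresponding inverse branches yields $\mathrm{Leb}(E^c)=0$. Undoing the reduction to the simplex finishes the proof.

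The step I expect to be the main obstacle is precisely the balanced-recurrence statement with its geometric consequences (i)--(ii): positivity of a matrix product (Birkhoff contraction) is cheap but controls only the Hilbert metric, whereas here one needs Lebesgue-bounded eccentricity and near-isometric distortion on a full-measure set, and one needs this \emph{without} the Birkhoff ergodic theorem, which is unavailable both because ergodicity is the goal and because these algorithms carry only a $\sigma$-finite, typically infinite, absolutely continuous invariant measure. This is exactly where the estimates of Kerckhoff \cite{ker} and genuinely probabilistic (Borel--Cantelli, large-deviation-type) reasoning are essential, much as in \cite[Theorem 9]{iet kurz}. Secondary difficulties are the bookkeeping for the proliferating cylinders and the connectivity combinatorics of $\mathcal P$, and the reduction to the projectivization, which, though standard, relies on the almost-everywhere effectiveness of the radial contraction and is not entirely formal.
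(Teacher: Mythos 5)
Your proposal founders on its very first step. You assert that, because $\mathcal{I}$ is homogeneous of degree one, an $\mathcal{I}$-invariant set is a union of rays modulo a null set, and that one may therefore pass to the projectivized map $\widehat{\mathcal{I}}$ on $\Delta_{d-1}\times\mathcal{P}$. This ``reduction'' is in fact essentially the theorem itself. The normalized map $I$ is a measure-theoretic factor of the cone map $\mathcal{I}$ via the radial projection, and ergodicity passes from a system to its factors, not the other way round. Homogeneity of $\mathcal{I}$ only means that the scaling action commutes with $\mathcal{I}$, so $tE$ is invariant whenever $E$ is; it does not force $E=tE$. Concretely, for the Euclidean algorithm the full $\mathcal{I}$-orbit of $\bar x$ equals $SL(2,\mathbb{Z})\bar x\cap\mathbb{R}^2_+$, a countable set meeting each ray in at most finitely many points, and an invariant set is a union of such orbits; there is no a priori reason it should be radially saturated, and proving that it nevertheless must be either null or conull is precisely Veech's theorem. (A toy model: $x\mapsto x/2$ on $(0,\infty)$ is degree-one homogeneous with a trivial normalization, yet admits many nontrivial Lebesgue-invariant sets, namely any set invariant under doubling.) Since the paper's explicitly stated goal is to reprove the cone statement without Teichm\"uller theory, and it records at the end of Subsection~3.3 that the ergodicity of the \emph{normalization} already follows from \cite{ker}, your reduction would trivialize the entire problem.

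What your sketch actually proves, granting the Kerckhoff balancedness input, is ergodicity of $I$ on $\Delta_{d-1}\times\mathcal{P}$ --- which the paper indeed uses as an ingredient (for instance in Lemma~\ref{start} and in the choice of a density point of $V^c$ satisfying condition~(\ref{point condition})). The genuinely new content of the paper's proof is the machinery that handles the radial direction inside the cone: Proposition~\ref{hits well} and its corollaries show the backward orbit of a fixed set fills wedge-shaped annuli $\Delta_{\beta^{-1}2^k,2^{k+1}}$ proportionally; Lemma~\ref{run condition} locates balls at arbitrarily small scales where the invariant set has density near $1$; and, crucially, Proposition~\ref{compare} uses Assumption~1 of Subsection~3.3 --- the existence of a loop, so that powers of the loop matrix $\tilde N$ grow polynomially --- to transport mass between thin radial slices $\Delta_{2^k(1+l\epsilon),2^k(1+(l+1)\epsilon)}$, so that the scale of the density ball can be matched against the radial scale of the wedge. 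That slicing step is exactly where the homogeneous problem is strictly harder than the projective one and is entirely absent from your proposal; without it, the density argument you run controls only the conditional measure of $E$ on the simplex, i.e.\ the normalized statement. You correctly identified the Hopf-type motivation and the role of Kerckhoff's balance estimates, but the missing ingredient is the radial transport, not the transverse distortion control.
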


 In fact once established that they are ergodic with respect to Lebesgue measure it  implies that they are exact, so they satisfy a $0-1$ law. A result proved by Miernowski and Nogueira  \cite{mie} says that a non-singular ergodic map $T$ is exact  if, and only if, it fulfills the following intersection property: for every positive measure set $A$ there exists a $k=k(A) \geq 0$ such that the set $T^k(A) \cap T^{k+1}(A)$ is a positive measure set. There it is proved that the Rauzy induction algorithm is exact. A Rauzy induction type algorithm keeps all properties of the Rauzy induction algorithm responsible for the intersection property, in particular the existence of a {\it loop} in the graph, 
 therefore  the same claim holds for all Rauzy induction type algorithms. 
 
 \begin{cor} \label{exact} Let $\mathcal{I}$ be a Rauzy induction type algorithm, 
then the map $\mathcal{I}$ is exact  with respect to Lebesgue measure.\end{cor}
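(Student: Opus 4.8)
The plan is to deduce Corollary~\ref{exact} from Theorem~\ref{main} together with the criterion of Miernowski and Nogueira quoted above. First I would record that $\mathcal{I}$ is non-singular for Lebesgue measure: by definition a Rauzy induction type algorithm acts piecewise linearly on finitely many copies of the positive cone, so on each piece it is an invertible linear map followed by a relabelling of the copy, and such a map preserves the Lebesgue measure class. Combined with the ergodicity supplied by Theorem~\ref{main}, the result of \cite{mie} reduces the claim to the \emph{intersection property}: for every Borel set $A$ with positive Lebesgue measure there is an integer $k=k(A)\ge 0$ with $|\mathcal{I}^{k}(A)\cap\mathcal{I}^{k+1}(A)|>0$.

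To verify the intersection property I would exploit the combinatorial structure exactly as in the treatment of Rauzy induction in \cite{mie}. For an admissible word $w$ in the graph let $\Delta_w$ be the subcone on which the first $|w|$ steps of $\mathcal{I}$ follow $w$; on $\Delta_w$ the map $\mathcal{I}^{|w|}$ is a single invertible linear map onto the full cone over the terminal node of $w$, and, after projectivizing, the diameters of the $\Delta_w$ tend to $0$, so the family $\{\Delta_w\}$ generates the Borel $\sigma$-algebra modulo null sets. Fix a \emph{loop} $g$ in the graph, i.e.\ an admissible word based at a node $\pi$ starting and ending at $\pi$; such a loop exists for a Rauzy induction type algorithm by hypothesis, and as in the Rauzy graph one may take it so that a single further passage around it moves the cone $C$ over $\pi$ into itself. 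Given $A$ with $|A|>0$, ergodicity makes $\bigcup_{n\ge 0}\mathcal{I}^{n}(A)$ conull; choosing a cylinder $\Delta_v$ terminating at $\pi$ around a Lebesgue density point of some forward image $\mathcal{I}^{n}(A)$, the linear map $\mathcal{I}^{|v|}$ pushes $\mathcal{I}^{n}(A)\cap\Delta_v$ to a subset of $C$ of relative density close to $1$; thus, with $j=n+|v|$, the set $\mathcal{I}^{j}(A)$ occupies almost all of a fixed bounded region of $C$.

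The argument then closes by one more application of the loop. Since the step of $\mathcal{I}$ realizing the loop restricts on (a subcone of) $C$ to an explicit linear bijection onto $C$ with constant Jacobian, applying it to $\mathcal{I}^{j}(A)$ again fills almost all of the same bounded region of $C$; hence $\mathcal{I}^{j}(A)$ and $\mathcal{I}^{j+1}(A)$ both have density close to $1$ there, so their intersection has positive Lebesgue measure. Taking $k=j$ gives the intersection property, and by \cite{mie} the map $\mathcal{I}$ is exact. The step I expect to be the main obstacle is precisely this last one: producing the overlap at two \emph{consecutive} times rather than merely at two distinct times, which forces one to use a specific feature of the graph, namely a loop along which the relevant cone is carried into itself in a single step (as in the Rauzy case), so that the extra iterate needed to pass from $\mathcal{I}^{k}(A)$ to $\mathcal{I}^{k+1}(A)$ can be absorbed into going once more around that loop. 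Everything else is the routine transcription, to the piecewise-linear cone setting, of the exactness argument for Rauzy induction already carried out in \cite{mie}.
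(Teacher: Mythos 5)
Your proposal follows essentially the same route as the paper: both invoke Theorem~\ref{main} for ergodicity, then appeal to the Miernowski--Nogueira criterion (exactness $\Leftrightarrow$ ergodicity plus the intersection property) and observe that the loop in the graph lets one transplant the intersection-property argument of \cite{mie} from Rauzy induction to any Rauzy induction type algorithm. The paper leaves that transplantation at the level of a one-sentence remark, whereas you sketch the cylinder/density-point argument explicitly; the two are the same argument at different levels of detail, and you correctly flag the genuinely delicate step (producing overlap at two \emph{consecutive} times, which is what the loop is for).
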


Our approach can also be applied to Selmer algorithm (see \cite{sc}).
Let 
$$
\Sigma_d=\{\bar x=(x[1] ,\ldots,  x[d]) \in \mathbb{R}^d_+:x[1] \leq \ldots \leq  x[d]\}
$$ 
and $\sigma_{\bar x}$ be the permutation which arranges $x[1] ,\ldots, x[d]-x[1]$ in ascending order. 
The map 
\begin{equation}\label{selmer}
S:\bar x \in \Sigma_d \mapsto 
 \sigma_{\bar x} (x[1] ,\ldots,  x[(d-1)], x[d]-x[1]) \in \Sigma_d
 \end{equation}
is  called {\it Selmer algorithm}. As an application of Theorem \ref{main},  we obtain.

\begin{cor}\label{exact2} 
The Selmer  algorithm  is  ergodic   with respect to Lebesgue measure.
\end{cor}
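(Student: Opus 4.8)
The plan is to realize the Selmer algorithm $S$ as a measure--theoretic factor of a Rauzy induction type algorithm $\mathcal{I}$ on $\mathbb{R}^d_+\times\mathcal{P}$, and then to invoke Theorem \ref{main}. The reason $S$ is not literally of that form as written is that it lives on the single ordered cone $\Sigma_d$ and the sorting permutation $\sigma_{\bar x}$ depends on $\bar x$; although $S$ is piecewise linear, one must record the combinatorics of the sorting in an auxiliary state in order to display the dynamics on finitely many copies of $\mathbb{R}^d_+$ with genuinely linear branches. First I would introduce the unfolded space $\mathbb{R}^d_+\times\mathcal{P}$, where $\mathcal{P}$ is a suitable collection of orderings (permutations) of $\{1,\dots,d\}$ and the copy indexed by $\pi$ is identified with the cone $C_\pi=\{\bar y\in\mathbb{R}^d_+:y[\pi(1)]\le\cdots\le y[\pi(d)]\}$, which is linearly isomorphic to $\mathbb{R}^d_+$. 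On this copy I would set $\widetilde S(\bar y,\pi)=(\bar y',\pi')$, where $\bar y'$ differs from $\bar y$ only in the coordinate $y'[\pi(d)]=y[\pi(d)]-y[\pi(1)]$, and $\pi'$ is the unique ordering with $\bar y'\in C_{\pi'}$, obtained from $\pi$ by moving the index $\pi(d)$ down to its new rank. A direct check shows that the projection $p:\mathbb{R}^d_+\times\mathcal{P}\to\Sigma_d$, $p(\bar y,\pi)=(y[\pi(1)],\dots,y[\pi(d)])$, intertwines $\widetilde S$ and $S$ off the null set where two coordinates coincide: $p\circ\widetilde S=S\circ p$ almost everywhere.

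Next I would verify that $\widetilde S$, with an appropriately chosen $\mathcal{P}$, is a Rauzy induction type algorithm in the sense of the paper. Each branch of $\widetilde S$ acts by the elementary matrix subtracting one coordinate from another (composed with a coordinate relabelling), so each branch is invertible with nonnegative integer inverse matrix, exactly as for Rauzy induction; the assignments $\pi\mapsto\pi'$ define a directed graph on $\mathcal{P}$, and one has to check that, after restricting $\mathcal{P}$ to a suitable invariant sub--collection, this graph carries the defining features of a Rauzy graph — in particular that it is strongly connected and contains a loop whose associated product of branch matrices is strictly positive (this is the loop underlying Corollary \ref{exact} as well). This is where the variant of Selmer's algorithm alluded to in the abstract enters: one fixes $\mathcal{P}$ so that these properties hold. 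Granting this, Theorem \ref{main} applies and gives that $\widetilde S$ is ergodic with respect to Lebesgue measure on $\mathbb{R}^d_+\times\mathcal{P}$.

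Finally I would transfer ergodicity through $p$. Since $p$ restricted to each copy $C_\pi$ is a linear bijection onto $\Sigma_d$ with constant Jacobian and $\mathcal{P}$ is finite, the pushforward $p_*(\mathrm{Leb})$ is a positive constant multiple of Lebesgue measure on $\Sigma_d$, hence equivalent to it; combined with $p\circ\widetilde S=S\circ p$ almost everywhere and non--singularity, this makes $(S,\Sigma_d,\mathrm{Leb})$ a measure--theoretic factor of the ergodic system $\widetilde S$: if $A=S^{-1}A$ up to a null set then $p^{-1}(A)=\widetilde S^{-1}(p^{-1}A)$ is $\widetilde S$--invariant, hence null or conull, hence $A$ is null or conull. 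Thus $S$ is ergodic with respect to Lebesgue measure, which is the corollary.

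The step I expect to be the main obstacle is the combinatorial verification of the second paragraph: pinning down the correct set $\mathcal{P}$ and confirming that the induced transition graph meets the axioms of a Rauzy induction type algorithm, especially strong connectivity and the existence of a loop yielding a strictly positive matrix. The measure--theoretic ingredients — the unfolding, the intertwining identity, and the equivalence of $p_*\mathrm{Leb}$ with Lebesgue measure on $\Sigma_d$ — are routine.
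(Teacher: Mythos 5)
Your overall shape — unfold $S$ to a piecewise--linear map on copies of a cone indexed by permutations, bring it under Theorem~\ref{main}, and push ergodicity through the factor map — is the right one, and the measure--theoretic factor argument at the end is sound. The unfolding is also essentially what the paper does: it introduces $\mathcal{S}$ on $\Omega\times\mathcal{P}$ (after first restricting the Selmer map to its absorbing cone $\Omega_d$), with the bookkeeping permutation recording how coordinates have been reordered under sorting.

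The gap is exactly at the step you yourself flag as the main obstacle. The paper makes a point of observing, in Section~5, that the unfolded Selmer map does \emph{not} satisfy Assumption~1 of Subsection~3.3: in Kerckhoff's language, the Selmer branches produce nontrivial isolated column sets, i.e.\ Kerckhoff's Proposition~1.4 fails. So $\mathcal{S}$ is \emph{not} a Rauzy induction type algorithm, and Theorem~\ref{main} cannot be invoked as a black box. This cannot be repaired by a cleverer choice of the bookkeeping set $\mathcal{P}$ or by a different unfolding, because the obstruction is in the subtraction pattern of the Selmer matrices, not in the combinatorics of the graph (the graph in Figure~1 is in fact strongly connected). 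What the paper does instead is to bypass Assumption~1 entirely and prove directly the property that Assumption~1 was only a sufficient condition for, namely Assumption~3 of Subsection~4.1 (the $\beta$--balance estimate, i.e.\ Kerckhoff's Corollary~1.7). That is the content of Proposition~\ref{selmer balanced general}, whose proof is a genuinely Selmer--specific piece of dynamics: the decay of the entry in the critical position, Lemma~\ref{leave critical}, Lemma~\ref{combine}, the jacobian comparison Lemma~\ref{jacobian bound}, and Lemma~\ref{d critical} showing the coordinate corresponding to $v[d]$ eventually visits the critical position with definite probability before the column norm grows much. Since the proof of Theorem~\ref{main} consumes only the balance estimate rather than Assumption~1 itself, the argument then goes through. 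Your proposal is missing precisely this replacement for Assumption~1, which is the central technical contribution of the section; the verification you postpone as ``routine combinatorics'' is in fact false for Selmer, and the fix is the direct balance estimate of Proposition~\ref{selmer balanced general}.
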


The paper is organized as follows. In Section 2 we define interval exchange transformations and the Rauzy induction algorithm.  
In Section 3 we extend the definition of the Rauzy induction algorithm, where the Rauzy graph is replaced by any graph which fulfills  the hypotheses  in the end of Section 3. Our main theorem, Theorem \ref{main}, holds for any  Rauzy type algorithm. 
 The proof of our main theorem (Theorem \ref{main}) is in  Section 4.  Our proof is based on the work of Kerckhoff \cite{ker}.
Section 5 is devoted to the  Selmer algorithm. There we show that the dynamics of Selmer algorithm can be described by a variation of the Rauzy type algorithm. However one of the assumptions of a Rauzy type algorithm which concerns a central property of the simplicial system,  is not fulfilled. This property is proved in Proposition \ref{selmer balanced general}. 
For completeness, in Section 6 
we give examples of  classical homogeneous algorithms.  We also include examples of homogeneous continued fraction algorithms which  are not ergodic.

\section{Homogeneous  algorithms}

\noindent
Let $\mathbb{R}_+^d=\{ (x[1],\ldots, x[d])\in \mathbb{R}^d :  x[i] \geq 0,1\leq  i \leq d\}$ denote the positive cone, where $d\geq2$. Throughout the paper $\bar x$ will also be identified with the corresponding column-vector, so, if $M$ is a $d\times d$-matrix,   $M\bar x$  is well defined.

The {\it homogeneous Euclidean algorithm} is the  map defined by
\begin{equation}\label{E1}
\mathcal{I}: \bar x=(x[1] ,x[2]) \in \mathbb{R}_+^2 \longmapsto
\left\{ \begin{array}{ll}         
         (x[1] - x[2], x[2]), & \mbox{if $x[1] \geq x[2]$} \\
         (x[1], x[2] - x[1]), & \mbox{otherwise.}
\end{array}
\right.
\end{equation}
Let $x[1]$ and $x[2]$ be positive integers, thus the action of successive powers of $\mathcal{I}$ on  $\bar x=(x[1],x[2])$ corresponds to the application of the Euclidean algorithm for finding the \emph{greatest common divisor}  of $x[1]$ and $x[2]$,  say $\delta$. There exists a $k\geq 1$ such that  $\mathcal{I}^k(\bar x)=(0,\delta)$ which is the source of the name for the transformation $\mathcal{I}$. 

Let $\gamma_1=\left( \begin{array}{cc}
1 &   1 \\
0  & 1 
\end{array} \right)$ and $\gamma_2=\left( \begin{array}{cc}
1 &   0 \\
1  & 1 
\end{array} \right)$ be the elementary matrices of $SL(2,\mathbb{Z})$.
 In matricial form (\ref{E1}) is given by
$$
\mathcal{I}: \bar x=\left( \begin{array}{c}
x[1] \\
x[2]
\end{array} \right) \in \mathbb{R}_+^2 \longmapsto
\left\{ \begin{array}{ll}         
         \gamma_1^{-1}\bar x, & \mbox{if $x[1] \geq x[2]$} \\
         \gamma_2^{-1}\bar x, & \mbox{otherwise}
\end{array}
\right.
$$ 
and has a striking property (see \cite{poin}): for every $\bar x \in \mathbb{R}_+^2$, the full orbit of $\bar x$
$$
\cup_{k\geq 0}\cup_{l\geq 0} \mathcal{I}^{-l}(\{\mathcal{I}^{k}(\bar x)\})= SL(2,\mathbb{Z}) \{ \bar x\} \cap  \mathbb{R}_+^2.
$$

We call {\it homogeneous algorithms} maps which are multidimensional generalizations  of (\ref{E1}).

Next we present the Rauzy induction algorithm acting on the space of interval exchange transformations which was introduced by Rauzy (see \cite{ra}). For completeness sake we begin with the definition of interval exchange transformation.

\subsection{Interval exchange transformation}

Here our main reference is Veech \cite{gauss}. Let  $\mathfrak{S}_d$ be the set of permutations on $d$ letters.  An exchange of $d$ intervals is defined by two parameters $(\bar x,\pi)\in \mathbb{R}_+^d\times \mathfrak{S}_d$.  Let $I^{\bar x}=[0,\vert \bar x \vert )$, where 
$\bar x=( x[1] ,\ldots, x[d])$. We set $\alpha_0(\bar x)=0$ and $\alpha_i(\bar x)=x[1] +\ldots+ x[i]$, for $1\leq i \leq d$. The points $\alpha_i(\bar x)$ partition the interval $I^{\bar x}$ into $d$ subintervals $I^{\bar x}_i=[\alpha_{i-1}(\bar x),\alpha_i(\bar x) )$ of length $\bar x_i$ and $\pi$ is used to permute the subintervals $I^{\bar x}_i$. We set $\bar x^{\pi}=( x^{\pi}[1], \ldots ,  x^{\pi}[d])$, where  $x^{\pi}[i] = x[\pi^{-1}(i)]$, for $1\leq i \leq d$. 
The one-one onto map  defined by 
$$
y \in  I^{\bar x}_i \mapsto y- \alpha_{i-1}(\bar x) + \alpha_{\pi(i)-1}(\bar x^{\pi}) , \; \mbox{ for } \; 1\leq i \leq d,
$$
is the so called $(\bar x,\pi)$-{\it interval exchange} $T_{(\bar x,\pi)}:I^{\bar x}\to I^{\bar x^{\pi}}_{\pi(i)}= I^{\bar x}$.  
The map $T_{(\bar x,\pi)}$ acts as a translation on each subinterval $I^{\bar x}_i$, thus it preserves the Lebesgue measure. 

We say  that a permutation $\pi \in \mathfrak{S}_d$ is $ irreducible$, if $1\leq k\leq d$ and $\pi\{1,\ldots,k\}= \{1,\ldots,k\}$ imply $k=d$. In others words, for an irreducible permutation $\pi$,  if $y>0$ and $T_{(\bar x,\pi)}([0,y))=[0,y)$, then $y=\alpha_d(\bar x)$. We denote by $\mathfrak{S}_d^0$ the set of irreducible permutations of $\mathfrak{S}_d$.

If $\pi$ is not irreducible, for every $\bar x\in\mathbb{R}^d_+$ the corresponding $(\bar x,\pi)$-interval exchange may be seen as two separated exchanges of $k$ and $d-k$ intervals. In particular it is not ergodic with respect to Lebesgue measure. In what follows only irreducible permutations will be considered.

\subsection{Rauzy inductive process}

Here we follow  \cite[Section 2]{nru}. Let $T_{(\bar x,\pi)}$ be an interval exchange given by an irrational vector $\bar x$ which means that its coordinates are rationally independent and $\pi \in \mathfrak{S}_d^0$. The so-called {\it Rauzy induction} assigns to $T_{(\bar x,\pi)}$ a first return map induced on a suitable subinterval of $I^{\bar x}$. We partition $\mathbb{R}_+^d$ into two sub-cones   
$$
\mathcal{C}'=\{\bar x :  x[d]> x^{\pi}[d]\} \; \mbox{ and } \;\mathcal{C}''=\{\bar x : x^{\pi}[d] >  x[d] \}
$$
and define the induction on each of them separately. If $\bar x \in \mathcal{C}'$, we define
$$
T':[0, \alpha_{d-1}(\bar x^{\pi})) \rightarrow [0, \alpha_{d-1}(\bar x^{\pi}))
$$
to be the first return map induced by $T_{(\bar x,\pi)}$ on the interval $[0, \alpha_{d-1}(\bar x^{\pi}))$. A computation shows that $T'$ is still a $d$-interval exchange. The couple of parameters $(\bar x',\pi')$ corresponding to $T'$ is described as follows. Consider the $n\times n$-matrix
$$ 
A_{\pi}'=\left( \begin{array}{ccccccc}
1 &   & & & & & \\
   & 1& & & & 0 & \\
   &    & & \ddots & && \\
   & 0 & &&&& \\
   &&&-1&&&1
\end{array} \right),
$$
where $(A_{\pi}')_{d, \pi^{-1}d}=-1$. Then $\bar x'=A_{\pi}'\bar x$ and the permutation $\pi'$ is given by
$$
\pi'(j)= 
\left\{ \begin{array}{ll}         
         \pi(j), & \mbox{if $\pi(j)\leq  \pi(d)$,} \\
         \pi(j)+1, & \mbox{if $\pi(d)<\pi(j)< d$,}\\
         \pi(n)+1, & \mbox{if $\pi(j)=d$.}
         \end{array}
\right.
$$

If $\bar x \in \mathcal{C}''$, we define 
$$
T'':[0, \alpha_{d-1}(\bar x)) \rightarrow [0, \alpha_{d-1}(\bar x))
$$
by inducing $T_{(\bar x,\pi)}$ on the interval $[0, \alpha_{d-1}(\bar x))$. Then $T''$ is also an $n$-interval exchange. We consider the $GL(d,\mathbb{Z})$ matrix
\begin{equation}\label{pi1}
A_{\pi}''=\left( \begin{array}{ccccccc}
1 &             &    & & & & \\
   & \ddots &    &  & && \\
   &              & 1&   &            & &   -1\\
   &              &0 & 0&\ldots  &0& 1\\
   &              &   & 1&            && \\
    &             &   &   & \ddots & & \\
    &             &   &   &             & 1& 0
\end{array} \right),
\end{equation}
where $(A_{\pi}'')_{\pi^{-1}d,d}=-1$, and set $\bar x''=A_{\pi}''\bar x$. Let the permutation $\pi''$ be given by
\begin{equation}\label{pi2}
\pi''(j)= 
\left\{ \begin{array}{ll}         
         \pi(j), & \mbox{if $j \leq  \pi^{-1}(d)$} \\
         \pi(n), & \mbox{if $j= \pi^{-1}(d)+1$}\\
         \pi(j-1), & \mbox{otherwise.}
         \end{array}
\right.
\end{equation}
We have $T''=T_{(\bar x'',\pi'')}$.

Let $\pi_0\in \mathfrak{S}_d^0$ be a fixed permutation and define $\mathcal{P}$ to be the set of all permutations $\pi\in \mathfrak{S}_d^0$ which can be reached by the successive iterations of the Rauzy induction starting at some $T_{(\bar x,\pi_0)}$, $\bar x\in\mathbb{R}_+^d$.  The set $\mathcal{P}$ is called the {\it Rauzy class of permutations of $\pi_0$}, or the {\it Rauzy class of} $\pi_0$ for short.  

In order to study the possible sequences of permutations arising from this process,  we construct a directed graph $\mathcal{G}$ whose nodes are the permutations $\pi\in \mathcal{P}$. For every $\pi\in  \mathcal{P}$ an arrow goes from $\pi$ to each of $\pi'$ and $\pi''$ given by (\ref{pi1}) and (\ref{pi2}) respectively. For $n=2$ we have only one Rauzy class whose graph consists of one node with two loops attached. 

The next lemma allows us to iterate the above inductive process.
 
\begin{lem}[\cite{nru}, Lemma 2.4] 
Let $\bar x$ be irrational and $\pi$ irreducible. Then both $\bar x',\bar x''$ are irrational and both $\pi',\pi''$ irreducible. 
\end{lem}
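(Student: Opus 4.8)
The plan is to prove the two assertions separately. \emph{Irrationality.} Since $\bar x'=A_{\pi}'\bar x$ and $\bar x''=A_{\pi}''\bar x$ with $A_{\pi}'$ and $A_{\pi}''$ in $GL(d,\mathbb{Z})$ (for $A_{\pi}''$ this is noted above; $A_{\pi}'$ is triangular with unit diagonal), these matrices are invertible over $\mathbb{Q}$, so they carry any vector with rationally independent coordinates to another such vector: if $\sum_i q_i\,x'[i]=0$ for some $q=(q_1,\dots,q_d)\in\mathbb{Q}^d\setminus\{0\}$, then $((A_{\pi}')^{\top}q)^{\top}\bar x=q^{\top}(A_{\pi}'\bar x)=0$ with $(A_{\pi}')^{\top}q\neq0$, contradicting the rational independence of the coordinates of $\bar x$; likewise for $\bar x''$. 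Positivity of the coordinates of $\bar x'$, $\bar x''$ is immediate because, as recalled above, $T'$ and $T''$ are genuine $d$-interval exchanges. I note in passing that rational independence keeps $\bar x$ off the hyperplane $\{x[d]=x^{\pi}[d]\}=\{x[d]=x[\pi^{-1}(d)]\}$ --- a nontrivial relation once $\pi^{-1}(d)\neq d$, which holds since $\pi$ is irreducible --- so for irrational $\bar x$ exactly one of $\bar x'$, $\bar x''$ is defined, and I argue with whichever one occurs.

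For irreducibility I would argue by contradiction, passing through the dynamics. Suppose $\bar x\in\mathcal{C}'$ and $\pi'$ is reducible, say $\pi'\{1,\dots,k\}=\{1,\dots,k\}$ for some $1\le k\le d-1$ (the case $\bar x\in\mathcal{C}''$ is handled in the same way with $\pi''$ and $\bar x''$). By the standard fact that reducible combinatorial data give a decomposable interval exchange, $T_{(\bar x',\pi')}$ maps the subinterval $[0,c)$, $c=\alpha_k(\bar x')$, onto itself: the first $k$ subintervals of the $\bar x'$-partition fill $[0,c)$, and because $\pi'$ permutes $\{1,\dots,k\}$ their images occupy the positions $1,\dots,k$ of the reordered partition and hence again fill $[0,c)$; moreover $0<c<|\bar x'|$ since $1\le k\le d-1$ and $\bar x'$ has positive coordinates. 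Now $T'=T_{(\bar x',\pi')}$ is by construction the first-return map of $T=T_{(\bar x,\pi)}$ to the interval $J=[0,|\bar x'|)$, and $[0,c)\subseteq J$ is invariant under this first-return map; therefore, for any $y\in[0,c)$, every point of the forward $T$-orbit of $y$ that lies in $J$ in fact lies in $[0,c)$ --- the successive returns of the orbit to $J$ being precisely the iterates $(T')^{m}(y)\in[0,c)$, $m\ge0$ --- so the orbit misses the nonempty open subinterval $(c,|\bar x'|)$ of $[0,|\bar x|)$ and is not dense. But $\bar x$ has rationally independent coordinates, so it satisfies Keane's condition and $T_{(\bar x,\pi)}$ is minimal, which forces every forward orbit to be dense in $[0,|\bar x|)$: a contradiction. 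Hence $\pi'$, and symmetrically $\pi''$, is irreducible.

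The main obstacle will be the final step. One has to confirm that rational independence of the coordinates is enough to invoke Keane's theorem --- any connection would manifest a nontrivial rational relation among the $x[i]$ --- and one must use the formulation of minimality as density of \emph{forward} semi-orbits rather than full orbits. The rest is either routine (the linear algebra for irrationality) or a standard geometric observation (reducibility of the combinatorial datum producing a proper invariant subinterval). There is also an entirely combinatorial alternative: check directly from the explicit formulas for $\pi'$ and $\pi''$ that no proper initial segment $\{1,\dots,k\}$ is invariant; this sidesteps the dynamics but splits into several cases according to the position of $\pi^{-1}(d)$ relative to $k$, so I would present the dynamical argument.
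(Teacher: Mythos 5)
The paper itself gives no proof of this lemma; it is cited from the reference [nru] (their Lemma 2.4), where, as is standard in the literature going back to Rauzy and Veech, the irreducibility of $\pi'$ and $\pi''$ is verified by a direct combinatorial case check on the explicit formulas, and irrationality is the easy observation that $A_\pi',A_\pi''\in GL(d,\mathbb{Z})$ preserve rational independence. Your irrationality argument is that standard one. For irreducibility you take a genuinely different, dynamical route: a reducible $\pi'$ would give a proper $T'$-invariant subinterval $[0,c)\subsetneq J=[0,|\bar x'|)$, and since $T'$ is the first-return map of $T=T_{(\bar x,\pi)}$ to $J$, the forward $T$-orbit of any $y\in[0,c)$ would miss $(c,|\bar x'|)$, contradicting the minimality of $T$ guaranteed by Keane's theorem (irreducible $\pi$ plus rationally independent lengths). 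This is correct, and it is conceptually cleaner than the case split --- it explains \emph{why} irreducibility must persist under induction --- but it pays for the elegance by invoking Keane's minimality theorem, a much deeper result than the elementary combinatorial check requires. There is no circularity, since Keane (1975) predates Rauzy induction and does not rely on it, and minimality of $T$ also guarantees that the first-return map to $J$ is everywhere defined, so the iterates $(T')^m(y)$ you use do exist. Your side remark that rational independence forces $\bar x$ into exactly one of $\mathcal{C}',\mathcal{C}''$ (because $\pi^{-1}(d)\neq d$ for irreducible $\pi$) is also correct and resolves the slight ambiguity in the lemma's phrasing about ``both'' $\bar x'$ and $\bar x''$.
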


Let $\pi$ be irreducible, so we associate to almost every pair $(\bar x,\pi)$ an infinite sequence $(\bar x^{(k)},\pi^{(k)})$ of points and an infinite  sequence  of elementary matrices. Denote by $A^{(k)}$ the product of the first $k$ elementary matrices so that $A^{(k+1)}=A^{(k)}E$, where $E$ is an elementary matrix. The point $\bar x^{(k)}$ belongs to the positive cone $A^{(k)}\mathbb{R}_+^d$. If $v_1,\ldots,v_d$ are the column-vectors of the matrix $A^{(k)}$, then there exist $1\leq j\neq k \leq d$ such that the column-vectors of the matrix $A^{(k+1)}$ equal $v'_1,\ldots,v'_d$ where $v'_i=v_i$, for every $i\neq j$, and $v'_j=v_j+v_k$. 

In order to prove that the interval exchange $T_{(\bar x,\pi)}$ is uniquely ergodic it suffices to establish that
$\displaystyle \cap_{k\geq 0} A^{(k)} \mathbb{R}_+^d$ is a one-dimensional set.

It has been proved in \cite{mie} that 

\begin{lem}  Let $\pi \in \mathfrak{S}_d^0$ be such that $\pi(d-1)=d$ and $\pi''$ be the permutation defined by (\ref{pi2}). Then $\pi''=\pi$. \end{lem}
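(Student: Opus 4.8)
The plan is a direct case check against the piecewise formula (\ref{pi2}). The single observation that makes everything collapse is that the hypothesis $\pi(d-1)=d$ is equivalent to $\pi^{-1}(d)=d-1$, so the ``breakpoint'' index $\pi^{-1}(d)+1$ appearing in (\ref{pi2}) is exactly $d$, the largest possible index. Consequently the three branches of (\ref{pi2}) reduce to only two nonvacuous cases.

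First I would record that $\pi^{-1}(d)=d-1$ and substitute this into (\ref{pi2}). For $j$ in the first branch, i.e. $j\le \pi^{-1}(d)=d-1$, the formula gives $\pi''(j)=\pi(j)$; this covers all of $\{1,\dots,d-1\}$. For the second branch, $j=\pi^{-1}(d)+1=d$, so it applies to the single value $j=d$ and gives $\pi''(d)=\pi(n)=\pi(d)$ (here $n=d$). The third ``otherwise'' branch would require $j>\pi^{-1}(d)+1=d$, which is impossible for $j\in\{1,\dots,d\}$, so it contributes nothing. Collecting the two active cases, $\pi''(j)=\pi(j)$ for every $j\in\{1,\dots,d\}$, i.e. $\pi''=\pi$.

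I do not anticipate any genuine obstacle: the only point requiring a moment's care is the bookkeeping of indices — reconciling the symbols $n$ and $d$ used interchangeably in (\ref{pi1})--(\ref{pi2}), and checking that the ``otherwise'' case is truly empty under the hypothesis rather than silently overlapping the second case. Once the equivalence $\pi(d-1)=d\iff\pi^{-1}(d)=d-1$ is made explicit, the verification is immediate. (One may also remark, as a sanity check consistent with the Rauzy picture, that $\pi(d-1)=d$ is precisely the degenerate configuration in which inducing ``type $\mathcal{C}''$'' leaves the combinatorial datum unchanged, which is why the statement is needed later when handling loops in the graph.)
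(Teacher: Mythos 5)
Your proof is correct: the key observation that $\pi(d-1)=d$ is equivalent to $\pi^{-1}(d)=d-1$ makes the ``otherwise'' branch of (\ref{pi2}) vacuous, the second branch applies only to $j=d$ and returns $\pi(d)$ (with $n=d$), and the first branch returns $\pi(j)$ on $\{1,\dots,d-1\}$, so $\pi''=\pi$. The paper itself does not spell out an argument but defers to \cite{mie}; your direct case check is exactly the verification one expects, and you correctly flag the one notational wrinkle (the interchange of $n$ and $d$) as harmless.
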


The above lemma proves that, at such node, the Rauzy graph has a loop. 
\begin{Bob}\label{loop}
A {\it loop permutation}  is an irreducible permutation $\pi$ with either $\pi'=\pi$ or $\pi''=\pi$. 
\end{Bob}
Every Rauzy class has a loop permutation.

The following lemma concerns the structure of the Rauzy graph for $d\geq 3$.

\begin{lem}[\cite{nru}, Lemma 2.2 and 2.4] Let $\pi_0 \in \mathfrak{S}_d^0$ and $\mathcal{G}$ be its graph. For every $\pi_1, \pi_2 \in \mathcal{G}$ there is a path in $\mathcal{G}$ starting at $\pi_1$ and reaching $\pi_2$. Moreover, every $\pi\in  \mathcal{G}$ has exactly two followers and two predecessors in $ \mathcal{G}$. \end{lem}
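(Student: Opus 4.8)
This is in essence Rauzy's original argument, and the plan is to reduce everything to one combinatorial fact: iterating either of the operations $\pi\mapsto\pi'$ or $\pi\mapsto\pi''$ of (\ref{pi1})--(\ref{pi2}) is periodic. First I would make this precise by reading off the action of $\pi\mapsto\pi''$ on the one--line notation $(\pi(1),\dots,\pi(d))$: if $k=\pi^{-1}(d)$, formula (\ref{pi2}) keeps positions $1,\dots,k$ fixed (so $d$ stays in position $k$) and cyclically shifts the block $(\pi(k+1),\dots,\pi(d))$; hence $k$ is an invariant of the iteration and $d-k$ successive bottom moves return $\pi$ --- the case $d-k=1$ being exactly the loop lemma quoted just above. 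Dually, with $\ell=\pi(d)$, formula (\ref{pi1}) sends the value $d$ to $\ell+1$ and raises each of $\ell+1,\dots,d-1$ by one, i.e.\ it cyclically shifts the values $\{\ell+1,\dots,d\}$ among their (fixed) positions, so $d-\ell$ top moves return $\pi$. Since both operations preserve irreducibility (the iteration lemma in the excerpt), each is a self--map of the finite set $\mathfrak S^0_d$ every point of which is periodic, hence a bijection of $\mathfrak S^0_d$.

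From this the local statement is short. The arrows out of $\pi$ point to $\pi'$ and $\pi''$ by definition; and because the top, resp.\ bottom, operation is a bijection of $\mathfrak S^0_d$, there is a unique $\sigma$ with $\sigma'=\pi$ and a unique $\tau$ with $\tau''=\pi$, which are the two arrows into $\pi$. So each node has in-- and out--degree $2$. To see that the two out--neighbours, and the two in--neighbours, are genuinely distinct for $d\ge3$, I would just look at the last coordinate: $\pi'(d)=\pi(d)$ always, while $\pi''(d)\neq\pi(d)$ unless $\pi''=\pi$, and $\pi'=\pi''=\pi$ would force $\pi\{1,\dots,d-2\}=\{1,\dots,d-2\}$, contradicting irreducibility; the same remark handles $\sigma\neq\tau$.

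For strong connectivity, let $F(\pi)$ be the set of permutations reachable from $\pi$ by a directed path in $\mathcal G$ (the empty path allowed), so $F$ is transitive, contains its argument, and $\mathcal P=F(\pi_0)$ by definition of the Rauzy class. The key observation --- again powered by the periodicity lemma --- is that $F(\pi)$ is also closed under \emph{inverse} moves: if $\rho\in F(\pi)$ and $\sigma$ maps to $\rho$ under, say, the bottom operation, then $\sigma$ and $\rho$ sit on one common cycle of that operation, so traversing that cycle once gives a directed path from $\rho$ back to $\sigma$, whence $\sigma\in F(\rho)\subseteq F(\pi)$; the top case is identical. Now take $\pi_1\in\mathcal P=F(\pi_0)$; reversing a directed path $\pi_0\to^*\pi_1$ exhibits $\pi_0$ as reachable from $\pi_1$ by inverse moves, hence $\pi_0\in F(\pi_1)$, hence $\mathcal P\subseteq F(\pi_1)$ by transitivity, and since trivially $F(\pi_1)\subseteq F(\pi_0)=\mathcal P$ we get $F(\pi_1)=\mathcal P$. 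Thus every $\pi_2\in\mathcal P$ is reachable from every $\pi_1\in\mathcal P$; incidentally the same closure shows the two predecessors of any $\pi\in\mathcal P$ again lie in $\mathcal P$, so they belong to $\mathcal G$.

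I expect the periodicity lemma of the first step to be the only place requiring thought --- extracting the cyclic--shift picture cleanly from the matrices (\ref{pi1})--(\ref{pi2}) and checking it is compatible with irreducibility. After that the degree count and the connectivity argument are bookkeeping, the mildly delicate point being the $d\ge3$ distinctness of the two neighbours, the one spot where one must handle the explicit permutation formulas rather than argue abstractly.
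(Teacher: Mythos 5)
Your proof is correct, and it is the standard argument (going back to Rauzy) for strong connectivity and regularity of the Rauzy diagram: exhibit each of the top and bottom operations as a periodic permutation of $\mathfrak{S}^0_d$ by reading off the cyclic-shift structure from the explicit formulas, deduce bijectivity and hence in-degree $2$, check distinctness of the two neighbours via the last coordinate and irreducibility, and get strong connectivity from closure of reachability sets under inverse moves along cycles. Note that the paper itself does not prove this lemma but simply cites \cite{nru}, Lemmas 2.2 and 2.4, where the same periodicity-based argument appears.
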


\subsection{Rauzy induction algorithm}

Let $ \mathcal{P}$ be a Rauzy class in $\mathfrak{S}_d^0$. The inductive process described in the previous subsection defines an algorithm $ \mathcal{I}$ acting on the parameter space $\mathbb{R}_+^d\times  \mathcal{P}$ by 
\begin{equation}\label{Ind}
\mathcal{I}:  (\bar x,\pi) \in \mathbb{R}_+^d \times \mathcal{P}  \longmapsto \left\{ \begin{array}{lll} (\bar x',\pi')  & \text{if} & \bar x_n > \bar x_n^{\pi}, \\ (\bar x'',\pi'') & \text{if} & \bar x_n <\bar x_n^{\pi} 
\end{array} \right.
\end{equation} 
which is called the {\it Rauzy induction} of interval exchange transformations. 

The space $\mathbb{R}^d_+ \times \mathcal{P}$ is endowed with Lebesgue measure.

\begin{thm}[\cite{metric}, Theorem 1.6] 
For every Rauzy class $\mathcal{P}$, the map $\mathcal{I}$ is ergodic on $\mathbb{R}^d_+  \times \mathcal{P}$ with respect to Lebesgue measure.\end{thm}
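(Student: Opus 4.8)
The plan is to recast the Hopf argument in combinatorial–probabilistic terms, using Kerckhoff's metric estimates in place of ergodic theory. First I would record the Markov structure of $\mathcal{I}$: to each finite path $w$ in the Rauzy graph $\mathcal{G}$ — a word encoding the permutations visited and the branch chosen at each step — there corresponds a cylinder $\Delta_w=A_w\mathbb{R}^d_+\times\{\pi_{\mathrm{start}(w)}\}$, where $A_w$ is the product of the associated elementary matrices. For a fixed $\pi_0$ the cylinders of length $k$ starting at $\pi_0$ partition $\mathbb{R}^d_+\times\{\pi_0\}$ up to a null set, and $\mathcal{I}^{|w|}$ restricts to an affine bijection $\Delta_w\to\mathbb{R}^d_+\times\{\pi_{\mathrm{end}(w)}\}$ whose first coordinate is $\bar x\mapsto A_w^{-1}\bar x$. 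Since $\det A_w=\pm1$, the map $\mathcal{I}$ preserves Lebesgue measure, and it suffices to prove that every $\mathcal{I}$-invariant set of positive measure is co-null.

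The analytic mechanism is bounded distortion, which holds only along \emph{balanced} cylinders — those $\Delta_w$ for which the columns of $A_w$, and the entries within each column, are pairwise comparable up to a fixed factor $\theta$. For such $w$ the projectivized restriction of $\mathcal{I}^{|w|}$ distorts normalized Lebesgue measure on the simplex by a factor depending only on $\theta$ and $d$, an elementary fact about simplices with comparable vertices. The essential input, taken from Kerckhoff \cite{ker}, is that balanced cylinders are abundant: there are $\theta>1$, $c>0$, $N\in\mathbb{N}$, depending only on $d$, such that inside any cylinder the union of the $\theta$-balanced sub-cylinders obtained after at most $N$ further steps occupies a proportion at least $c$ (for normalized Lebesgue measure). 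A conditional Borel--Cantelli argument then shows that almost every forward itinerary meets $\theta$-balanced cylinders infinitely often.

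Now the Hopf-type step. I would first prove that the projectivized map $\widehat{\mathcal{I}}$ on $\Delta\times\mathcal{P}$ is ergodic with respect to Lebesgue measure. Given a $\widehat{\mathcal{I}}$-invariant set $\widehat E$ of positive measure, the martingale convergence theorem (L\'evy's upward theorem), applied to the refining filtration of cylinders, gives that the Lebesgue density of $\widehat E$ in $\Delta_w(x)$ tends to $\mathbf 1_{\widehat E}(x)$ for a.e.\ $x$. Fixing a density point $x\in\widehat E$ and $\varepsilon>0$, I would select, among the infinitely many balanced cylinders $\Delta_w(x)$ along the itinerary of $x$, one in which the complement of $\widehat E$ has density at most $\varepsilon$; pushing forward by $\mathcal{I}^{|w|}$ and combining invariance with bounded distortion, the complement of $\widehat E$ then has density at most $\kappa(\theta,d)\,\varepsilon$ in the \emph{entire} fibre $\Delta\times\{\pi_{\mathrm{end}(w)}\}$. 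Since $\mathcal{P}$ is finite, some $\pi^\ast$ recurs along a sequence $\varepsilon\to0$, so $\Delta\times\{\pi^\ast\}\subseteq\widehat E$ up to a null set; since $\mathcal{G}$ is strongly connected and both branches always carry positive measure, almost every itinerary from any $\pi'$ eventually reaches $\pi^\ast$, whence $\widehat E$ is co-null. Finally, $\mathcal{I}$ on $\mathbb{R}^d_+\times\mathcal{P}$ is the $\mathbb{R}_{>0}$-cocycle extension of $\widehat{\mathcal{I}}$ by the norm factor $\bar x\mapsto|A_w^{-1}\bar x|$, and ergodicity lifts from $\widehat{\mathcal{I}}$ to $\mathcal{I}$ because this cocycle has essential values equal to all of $\mathbb{R}$; this is where the loop in $\mathcal{G}$ (Definition~\ref{loop}) is used, the long sojourns at a loop permutation producing a continuum of cocycle values.

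The main obstacle is the abundance of balanced cylinders and its interaction with the density argument: the entire scheme works only because one can return, with conditionally bounded-below probability, to configurations on which distortion is controlled, and establishing this recurrence is exactly the point at which Kerckhoff's combinatorial analysis is indispensable and cannot be replaced by soft ergodic-theoretic arguments.
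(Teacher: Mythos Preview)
Your proposal is sound in outline but takes a genuinely different route from the paper's proof of Theorem~\ref{main} (of which the stated theorem is the special case of a Rauzy class). You factor through the projectivized map: first reprove Kerckhoff's ergodicity of $I$ on $\Delta\times\mathcal{P}$ via bounded distortion on balanced cylinders, then lift to $\mathcal{I}$ on $\mathbb{R}^d_+\times\mathcal{P}$ by invoking the essential-range criterion for the log-norm cocycle. The paper instead takes the ergodicity of $I$ as a known input (it is used in Lemma~\ref{start} and Corollaries~\ref{hits well perm}--\ref{hits real well}) and works \emph{directly} in the infinite-measure cone: Lemma~\ref{matrices} manufactures, inside any wedge $p_\Delta^{-1}(U)$ and at every dyadic scale $2^k$, a large disjoint family of $\beta$-balanced induction matrices; Proposition~\ref{hits well} uses these to show that preimages of any positive-measure slab fill each wedge with definite density; and Proposition~\ref{compare} uses the loop matrix $\tilde N$ concretely---via the decomposition $M=A\tilde N^m B$ and variation of $m$---to slide that density between thin radial slabs $\Delta_{2^k(1+l\epsilon),2^k(1+(l+1)\epsilon)}$. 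A Lebesgue density argument (Lemma~\ref{run condition} plus the end of Section~4) then excludes density points of $V^c$.

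What each buys: your packaging is cleaner and more conceptual, and your first half is essentially a self-contained account of Kerckhoff's theorem; the paper's argument is more hands-on but avoids the cocycle machinery entirely. The one real gap in your proposal is the step ``the cocycle has essential values equal to all of $\mathbb{R}$; this is where the loop is used, the long sojourns at a loop permutation producing a continuum of cocycle values.'' This is asserted, not proved: for each fixed sojourn length the cocycle increment along the loop is a determined function of the entry point, and you must still rule out that the cocycle is cohomologous to one taking values in a lattice $a\mathbb{Z}$. The honest way to close this gap is precisely the computation in Proposition~\ref{compare}: writing induction matrices as $A\tilde N^m B$ with $A$ balanced and the entries of $B$ comparable, one sees that $|A\tilde N^{m+s}B\bar u|-|A\tilde N^m B\bar u|$ can be made to lie in an arbitrarily short interval around any target, for a positive-measure set of itineraries---which is exactly the non-arithmeticity you need. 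So the two arguments converge at this point; you should either carry out that estimate or cite it.
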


In order to illustrate the definition of $ \mathcal{I}$, we will now describe explicitly its action in the easiest cases $d=2,3$. In what follows the permutations are represented in the form $\pi=(\pi^{-1}(1) \; \ldots \; \pi^{-1}(d))$.
\\

\noindent {\bf 1.} In the case of $d=2$, there is only one irreducible permutation on two letters, the transposition $(21)$ which constitutes its own Rauzy class. The action of $\ \mathcal{I}$ on the second coordinate is thus trivial. On the first coordinate $ \mathcal{I}$ acts as the Euclidean algorithm  defined by (\ref{E1}).

\noindent {\bf 2.} For $d=3$ there is also only one Rauzy class and it contains all irreducible permutations on three letters, namely $(231)$, $(321)$, $(312)$. The Rauzy induction is decribed as follows:
$$
\mathcal{I}(\bar x,(231))=
\left\{ \begin{array}{ll}         
         ((x[1],x[2],x[3]-x[1]), (231)), & \mbox{if $x[3] > x[1]$}, \\
         ((x[1]-x[3],x[3],x[2]), (321)), & \mbox{if $x[1] > x[3]$},
\end{array}
\right.
$$ 
$$
\mathcal{I}(\bar x,(321))=
\left\{ \begin{array}{ll}         
          ((x[1],x[2],x[3]-x[1]), (312)), & \mbox{if $x[3] > x[1]$}, \\
         ((x[1]-x[3],x[3],x[2]), (231)), & \mbox{if $x[1] > x[3]$},
\end{array}
\right.
$$
$$
\mathcal{I}(\bar x,(312))=
\left\{ \begin{array}{ll}         
                  ((x[1],x[2],x[3]-x[2]), (321)), & \mbox{if $x[3] > x[2]$}, \\
         ((x[1],x[2]-x[3],x[3]), (312)), & \mbox{if $x[2] > x[3]$},
\end{array}
\right.
$$
From the above expressions the Rauzy graph can be deduced.\\

\noindent 
{\bf 3.} One may check that for $d=4$ we get two distinct Rauzy classes of irreducible permutations, one generated by $(4321)$ and another by $(3412)$.

\section{Rauzy induction type algorithm}

Motivated by the Rauzy induction algorithm (\ref{Ind}),  we can define other homogeneous continued fraction algorithms which fit the approach proposed by Kerckhoff in \cite{ker}. Our first definition  
consists of considering a graph which is not  a Rauzy graph,  but it satisfies the same conditions.  Another definition will be illustrated in Section $5$ with the example of the Selmer  algorithm which is not defined in  the whole positive cone.

\subsection{Others graphs} 
Let $d\geq 2$ be fixed. Let  $\Omega$ be a finite set of symbols and $\mathcal{P}$ a subset of the set
$$
\{(i,j,\omega) \mid 1 \leq i<j\leq d,\omega \in \Omega \}.
$$
Next we consider a directed graph $\mathcal{G}$ whose nodes are the elements $\pi \in \mathcal{P}$ 
and associates to each node $\pi \in \mathcal{P}$ a pair of nodes $(\pi',\pi'') \in \mathcal{P} \times \mathcal{P}$, where $\pi'\neq \pi''$. It means that at every node $\pi$ there is a arrow leaving from $\pi$ to $\pi'$ and another one  from $\pi$ to $\pi''$.

The graph $\mathcal{G}$ allows us to define a homogeneous algorithm 
$$
\mathcal{I}: \mathbb{R}_+^d \times \mathcal{P} \rightarrow \mathbb{R}_+^d \times \mathcal{P}. 
$$
Let $\bar x \in \mathbb{R}_+^d$ and $\pi=(i,j, \omega) \in \mathcal{P}$, the map $\mathcal{I}$ is given by
\begin{equation}\label{type}
\mathcal{I}(\bar x,\pi)=
\left\{ \begin{array}{ll}         
         (( x[1], \ldots,x[i]- x[j], \ldots,x[j],\ldots,x[d]),\pi'), & \mbox{if $x[i] \geq x[j]$} \\
         (( x[1], \ldots,x[i], \ldots,x[j]- x[i],\ldots,x[d] ),\pi''), & \mbox{otherwise.}
\end{array}
\right.
\end{equation}
We remark that, for each of the above cases, there is an elementary  $d\times d$-matrix, denoted by $M((\bar x, \pi),1)$, such that, if 
$\mathcal{I}(\bar x,\pi)= (\bar x',\sigma)$, then
$$
M((\bar x, \pi),1) \bar x'= \bar x.
$$

\subsection{Example} Here we present  a $d$-dimensional Rauzy induction type algorithm which is not given by a Rauzy induction, for every $d\geq 3$.   The algorithm is defined by the  graph whose node set equals $\mathcal{P}=\{1,2,\ldots , d\}$. The node $\pi=i$ is associated to the pair of nodes $(\pi',\pi'')$, where $\pi'=i$ and 
$
\pi'' =\left\{ \begin{array}{ll}         
         i+1, & \mbox{if $i\leq d-1$} \\
         1, & \mbox{if $i=d$.}
\end{array}
\right.$

Let $\bar x \in \mathbb{R}_+^d$, we set for    $1\leq i \leq d-1$ 
$$
\mathcal{I}(\bar x,i) =
\left\{ \begin{array}{ll}         
         (( x[1], \ldots,x[i]- x[i+1], x[i+1],\ldots,x[d]),i), & \mbox{if $x[i] \geq x[i+1]$} \\
         (( x[1], \ldots,x[i], x[i+1]- x[i],\ldots,x[d]),i+1), & \mbox{if  $x[i+1] > x[i]$,} 
\end{array}
\right.
$$
otherwise
$$
\mathcal{I}(\bar x,d) =
\left\{ \begin{array}{ll}         
         (( x[1],  \ldots,x[d-1],x[d]- x[1]),d), & \mbox{if  $x[d] \geq x[1]$} \\
         (( x[1]- x[d], x[2], \ldots,x[d] ),1), & \mbox{if  and $x[1] > x[d]$.}
\end{array}
\right.
$$
We notice that $\mathcal{I}$ has a loop at each node. 

\subsection{Assumptions} 

We call {\it Rauzy induction type algorithm} any map $\mathcal{I}$ (\ref{type}) which satisfies the following$:$\\
1) The graph $\mathcal{G}$ contains no nontrivial isolated set in the meaning of \cite[p. 262]{ker}.\\ 
2) Each node of the graph has two incoming arrows which  come from distinct vertices.\\
3) The graph $\mathcal{G}$ has a {\it loop}, that is, there exists $\pi  \in \mathcal{P}$  such that $\pi\in \{ \pi',\pi'' \}$, so that the matrix norm grows polynomially.\\
4) Let $(\bar x,\pi) \in  \mathbb{R}_+^d \times \mathcal{P}$, where $\bar x$ is irrational, and $\mathcal{I}^k(\bar x,\pi)= (\bar{x}^{(k)},\pi^{(k)})$, for every $k\geq 0$, then $\bar x^{(k)}$ converges to the origin, as $k \rightarrow \infty$.\\

\medskip

In particular all above assumptions hold for the Rauzy induction algorithm (\ref{Ind}).

Under these assumptions, applying the approach given in \cite{ker} to study interval exchange transformations, we obtain  the ergodicity of the normalization of any Rauzy induction type algorithm. Precisely,  let 
$$
\Delta_{d-1}=\{(x[1] ,\ldots, x[d]) \in \mathbb{R}_+^d: \vert \bar x\vert=x[1 ]+\ldots + x[d]=1\}
$$ 
be the $(d-1)$-dimensional simplex, then the  normalization of $\mathcal{I}$ 
\begin{equation}\label{norm}
I:(\bar x,\pi) \in \Delta_{d-1} \times \mathcal{P} \mapsto (\frac{\bar x'}{\vert \bar x' \vert},\sigma) \in \Delta_{d-1} \times \mathcal{P}, \; \mbox{ where } \; \mathcal{I}(\bar x,\pi)= (\bar x',\sigma),
\end{equation}
is ergodic with respect to Lebesgue measure in $\Delta_{d-1} \times \mathcal{P}$.

The approach developed in  \cite{mie} can be applied to any Rauzy induction type algorithm. Theorem \ref{main} and  \cite{mie} together give  Corollary \ref{exact}.

\section{Proof of Theorem 1}

Next we recall  the notation that will be used.

Let $\mathcal{I}:\mathbb{R}_+^d \times \mathcal{P}\to \mathbb{R}_+^d\times \mathcal{P}$ be
the Rauzy induction type algorithm we want to show is ergodic, where  $\mathcal{P}$ is a finite set.

Let $\lambda$ denote Lebesgue measure on $\mathbb{R}_+^d\times \mathcal{P}$ and  $\lambda_0$ denote the normalized Lebesgue measure on $\Delta \times \mathcal{P}$.
In an abuse of notation $\lambda$ will also denote Lebesgue measure on $\mathbb{R}_+^d$ and $\lambda_0$ will also denote Lebesgue measure on $\Delta$.

Throughout of the section let $\bar x$ be understood to be a vector in $\mathbb{R}_+^d$ or
$\Delta$ as appropriate.

Let $\Delta_{a,b}=\{\bar x =(x[1],\ldots ,x[d])  \in \mathbb{R}_+^d \mid  |\bar x|=\sum x[i] \in [a,b]\}$. So, $\Delta=\Delta_{1,1}$.

Let $p_{\Delta}:\mathbb{R}^d_+ \to \Delta$ by $\displaystyle p_{\Delta}(\bar v)=\frac{\bar v}{|\bar v|}$.

Let $\pi_1: \mathbb{R}^d_+ \times \mathcal{P}\to \mathbb{R}^d_+$ and $\pi_2: \mathbb{R}^d_+ \times \mathcal{P}\to \mathcal{P}$ be projections on the first and second coordinate, respectively.

Let $M((\bar x,\sigma), n)$ be the matrix that goes with $n$ steps of induction for $(\bar x,\sigma)$. When there is no confusion $\sigma$ is suppressed. This matrix is chosen so that 
$$
M(\bar x,n) \mathcal{I}^n\bar x=\bar x.
$$

If $M$ is a matrix let $M_{\Delta}$ denote the set $M\mathbb{R}_+^d \cap \Delta$.

Let $I: \Delta\times \mathcal{P} \to \Delta\times \mathcal{P}$ be the normalization defined  by  (\ref{norm}).

Let  $M=(M[{i,j}])$ be a real positive $(d\times d)$-matrix, we define $\displaystyle C_j(M)= \sum_{1\leq i \leq d} M[{i,j}]$, for every $1\leq j \leq d$, and   $\displaystyle C_{max}(M)= max\{C_j(M) :1\leq j \leq d\}$.

Let $\beta >1$,  a positive $d\times d$-matrix $M$ is  $\beta$-$balanced$, if
$$
 \beta C_j(M) \geq C_{max}(M).
$$
We denote by $\mathcal{M}_{\beta}$ the set of all $\beta$-balanced matrices.

We show Theorem \ref{main} by showing that if $V \subset \mathbb{R}_+^d\times \mathcal{P}$ with $\lambda(V)>0$, $\mathcal{I}(V)\subset V$ and 
$\mathcal{I}^{-1}(V)\subset V$, then  $\lambda(V^c)=0$.

The following two propositions help establish Theorem \ref{main}.
Let $S' \subset \Delta_{1,2}$ with $\lambda(S')>0$.
Let $S= S' \times \mathcal{P}$.

\begin{prop}\label{hits well} Let $U\subset \Delta$ be a nonempty open set. Then there exist $C_{\mathcal{I}}>0$ which depends only on $\mathcal{I}$ and $k_0 \in \mathbb{N}$  which depends only on   $U$ such that for any $k>k_0$ 
$$\lambda\left(  \left(\Delta_{\beta^{-1}2^k,2^{k+1}} \cap p^{-1}_{\Delta}(U)\right)\times \mathcal{P} \cap
 \underset{i=1}{\overset{\infty}{\cup}}\mathcal{I}^{-i}(S)\right)> C_{\mathcal{I}}\lambda_0(U)\lambda(\Delta_{\beta^{-1}2^k,2^{k+1}})\frac{\lambda(S)}{\lambda(\Delta_{1,2})}.$$
\end{prop}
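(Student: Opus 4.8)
The plan is to reduce the statement to an averaged version: I would first fix the open set $U$, and observe that it suffices to find, for $\lambda_0$-almost every $\bar u \in U$, a controlled portion of the ``ray'' $\{t\bar u : t \in [\beta^{-1}2^k, 2^{k+1}]\}\times\mathcal{P}$ that lands in $S$ under some iterate of $\mathcal{I}^{-1}$, with a lower bound depending only on $\mathcal{I}$. Then integration over $U$ in polar-type coordinates on $\mathbb{R}_+^d$ (slicing $\Delta_{\beta^{-1}2^k,2^{k+1}}$ by the rays through $U$) produces the factor $\lambda_0(U)\lambda(\Delta_{\beta^{-1}2^k,2^{k+1}})$, and the remaining uniform fraction of each ray contributes the constant $C_{\mathcal{I}}$ together with the density factor $\lambda(S)/\lambda(\Delta_{1,2})$. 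The point of the window $[\beta^{-1}2^k,2^{k+1}]$ and of the set $S \subset \Delta_{1,2}$ is scale invariance: since $\mathcal{I}$ is piecewise linear and homogeneous, if $(\bar y,\pi)\in \mathcal{I}^{-i}(S)$ then any positive rescaling $(c\bar y,\pi)$ satisfies $(c\bar y,\pi)\in \mathcal{I}^{-i}(cS)$, and $cS$ for $c$ ranging over a bounded interval covers a band $\Delta_{a,b}$ of comparable size.

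Concretely, the key steps in order: (1) Use assumption (4) (orbits of irrational points converge to the origin) together with the loop from assumption (3) to show that for $\lambda_0$-a.e.\ direction $\bar u$, the backward orbit under $\mathcal{I}$ of points $t\bar u$ visits, for a definite set of $t$'s, the ``good region'' near $S$; the loop is what guarantees the inverse branches do not shrink too fast, giving polynomial (not exponential) growth of $\|M(\bar x,n)\|$ and hence a non-negligible preimage. (2) Translate ``visiting $S'\times\mathcal{P}$ under $\mathcal{I}^{-i}$'' into a statement about the cylinder sets $M_{\Delta}$ where $M$ runs over admissible products of the elementary matrices of (\ref{type}): for each node $\pi$ and each $i$, the sets $\mathcal{I}^{-i}(S)$ decompose into pieces $M(\cdot,i)\,(S' \cap M_\Delta^{-1}\text{-image})$, and one tracks how these cylinders sit inside $\Delta_{\beta^{-1}2^k,2^{k+1}}\cap p_\Delta^{-1}(U)$. (3) Invoke Kerckhoff's estimates from \cite{ker} on the measures of these cylinders — in particular that a positive proportion of them are $\beta$-balanced (which is exactly why the $\beta$-balanced notion and the lower cutoff $\beta^{-1}2^k$ appear) — to get the uniform lower bound $C_{\mathcal{I}}$. (4) Assemble via Fubini: integrate the per-direction bound over $\bar u \in U$.

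I expect the main obstacle to be step (3): controlling \emph{uniformly in $k$} the measure of the portion of $\Delta_{\beta^{-1}2^k,2^{k+1}}\cap p^{-1}_\Delta(U)$ that is covered by preimages of $S$, i.e.\ showing the covering proportion does not degrade as $k\to\infty$. This is where one genuinely needs the $\beta$-balanced matrices: balanced cylinders have measure comparable to a fixed power of their ``size'', so a definite fraction of the band $\Delta_{\beta^{-1}2^k,2^{k+1}}$ is captured, and the constant can be taken independent of $k$ (only $k_0$, the time needed to first reach the relevant part of the graph from directions in $U$, depends on $U$). Handling the interface between the polar slicing over $U$ and the linear cylinder structure — making sure $p_\Delta^{-1}(U)$ meets each relevant cylinder in a set of the expected relative measure — is the technically delicate bookkeeping, but it is of the same flavor as the distortion estimates already available in \cite{ker}. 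Everything else (scale invariance, the decomposition of $\mathcal{I}^{-i}(S)$, the final Fubini) is routine.
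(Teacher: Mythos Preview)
Your plan has the right ingredients (Kerckhoff's balanced-matrix estimates, the cylinder decomposition $\mathcal{I}^{-n}(S)=\bigcup_M MS$, volume preservation $\det M=1$), but the ray-slicing/Fubini wrapper you put around them is both unnecessary and, as stated, contains a gap.

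The gap is the pointwise claim in step~(1): that for $\lambda_0$-a.e.\ $\bar u\in U$ a controlled fraction of the segment $\{t\bar u: t\in[\beta^{-1}2^k,2^{k+1}]\}$ lies in $\bigcup_i\mathcal{I}^{-i}(S)$. Kerckhoff's estimate (Assumption~3) only guarantees that a \emph{positive proportion} of directions in any cylinder $M'_\Delta$ eventually see a $\beta$-balanced matrix with $C_{\max}$ in the target window; the complementary set of directions has positive $\lambda_0$-measure and, for those $\bar u$, none of the balanced $MS'$ at that scale need meet the ray at all. So the a.e.\ per-ray lower bound can fail outright. Only the \emph{integrated} statement is available --- and once you integrate, Fubini collapses to a straight $d$-dimensional volume computation, which is exactly what the paper does without ever introducing rays.

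The paper's argument is this. For $k$ large it builds a family $\mathcal{V}_k(U)$ of pairwise-disjoint $\beta$-balanced induction matrices $M$ with $M_\Delta\subset U$ and $C_{\max}(M)\in[2^k,2^{k+1}]$, of cardinality $|\mathcal{V}_k(U)|\ge c'_{\mathcal{I}}\,\lambda_0(U)\,2^{kd}$ (Lemma~\ref{matrices}, which packages Assumption~3). Then $H=\bigcup_{M\in\mathcal{V}_k(U)}MS$ is a disjoint union; it lies in $\bigl(p_\Delta^{-1}(U)\cap\Delta_{\beta^{-1}2^k,\,2^{k+1}}\bigr)\times\mathcal{P}$ by balancedness (Lemma~\ref{size}) and in $\bigcup_i\mathcal{I}^{-i}(S)$ by construction; since each $M$ preserves $\lambda$, one gets $\lambda(H)=|\mathcal{V}_k(U)|\,\lambda(S)$ and the inequality follows immediately. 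No polar coordinates, no per-direction estimate. Two further remarks: your ``delicate interface bookkeeping'' between $p_\Delta^{-1}(U)$ and the cylinders evaporates, because one \emph{chooses} the matrices so that $M_\Delta\subset U$ rather than intersecting after the fact; and the loop (your step~(1)) and the convergence assumption~(4) are not used in this proposition at all --- the loop enters only in Proposition~\ref{compare}.
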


Informally this proposition states that $ \underset{i=1}{\overset{\infty}{\cup}}\mathcal{I}^{-i}(S)$ intersects wedges (that
is $p^{-1}_{\Delta}(U) \times \mathcal{P}$) in $\Delta_{\beta^{-1} 2^{k},2^{k+1}}$
about as much as one would expect. The following Corollary is a straightforward consequence of the ergodicity of $I$.

\begin{cor} \label{hits well perm} There exist $q$ depending only on  $\lambda_0(U)$ and $k_0$ depending only on   $U$ such that for any $k>k_0$ and each $\sigma \in  \mathcal{P}$ we have that
 $$\lambda\left( \left(\Delta_{\beta^{-1}2^k,q2^{k+1}} \cap p^{-1}_{\Delta}(U)\right)\times \{\sigma\}\cap\underset{i=1}{\overset{\infty}{\cup}} \mathcal{I}^{-i}(S)\right)
 > C^{(2)}_{\mathcal{I}}\lambda_0(U)\lambda(\Delta_{\beta^{-1}2^k,2^{k+1}})\frac{\lambda(S)}{\lambda(\Delta_{1,2})}.$$
\end{cor}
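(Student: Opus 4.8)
\textbf{Proof proposal for Corollary \ref{hits well perm}.}

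The plan is to bootstrap from Proposition \ref{hits well}, whose conclusion controls the intersection of $\bigcup_i \mathcal{I}^{-i}(S)$ with wedges $\left(\Delta_{\beta^{-1}2^k,2^{k+1}}\cap p^{-1}_\Delta(U)\right)\times\mathcal{P}$, but only summed over \emph{all} permutations $\sigma\in\mathcal{P}$. To isolate a single permutation we will use the ergodicity of the normalized map $I$ on $\Delta\times\mathcal{P}$ (the statement preceding the proof, valid for any Rauzy induction type algorithm). First I would note that, up to a constant depending on $\#\mathcal{P}$, Proposition \ref{hits well} produces at least one permutation $\sigma_0$ for which the single-permutation intersection is comparable to the bound; the task is to transfer this to an \emph{arbitrary} $\sigma\in\mathcal{P}$.

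The mechanism for the transfer is the following. By ergodicity of $I$, for the open set $U$ there is an integer $N=N(\lambda_0(U))$ and, for each ordered pair $(\sigma_0,\sigma)$, a power $m\le N$ and an open subset $U_0\subset U$ with $\lambda_0(U_0)$ bounded below in terms of $\lambda_0(U)$ such that $I^m$ maps $U_0\times\{\sigma_0\}$ into $U\times\{\sigma\}$; here one uses that the graph $\mathcal{G}$ is strongly connected (every node reaches every node) so that the relevant cylinder of admissible itineraries from $\sigma_0$ to $\sigma$ is nonempty, and then ergodicity/topological transitivity of $I$ gives such a $U_0$. Pulling back along $\mathcal{I}^m$ rather than $I^m$ only rescales the $|\bar x|$-coordinate by a bounded factor — this is where the parameter $q$ enters: choosing $q$ to absorb the maximal expansion $C_{max}(M)$ over all inductive matrices $M$ of length at most $N$ (a finite set, since $\mathcal{P}$ is finite and $m\le N$) guarantees that the preimage of $\left(\Delta_{\beta^{-1}2^k,2^{k+1}}\cap p^{-1}_\Delta(U)\right)\times\{\sigma\}$ under $\mathcal{I}^m$ still sits inside $\Delta_{\beta^{-1}2^k,q2^{k+1}}\times\{\sigma_0\}$ and inside $p^{-1}_\Delta(U_0)\times\{\sigma_0\}$. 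Since $\mathcal{I}$ is (piecewise) linear with bounded Jacobian on each such cylinder, the measure of the preimage is comparable (with constant depending only on $N$, hence only on $\lambda_0(U)$) to the measure of the image, and $\mathcal{I}^{-i}(S)\subset\mathcal{I}^{-i}(S)$ is respected because $\mathcal{I}^{-(i+m)}(S)$ is among the sets in the union $\bigcup_i\mathcal{I}^{-i}(S)$.

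Concretely, the steps in order: (i) apply Proposition \ref{hits well} and pigeonhole over $\#\mathcal{P}$ to get the estimate for some $\sigma_0$ with a constant $C_{\mathcal{I}}/\#\mathcal{P}$; (ii) fix an arbitrary target $\sigma$, and using strong connectedness of $\mathcal{G}$ together with ergodicity of $I$ produce $m\le N(\lambda_0(U))$, a subwedge $U_0\subset U$ with $\lambda_0(U_0)\gtrsim_{\lambda_0(U)}\lambda_0(U)$, and an admissible length-$m$ itinerary carrying $(U_0,\sigma)$ back to $(\cdot,\sigma_0)$; (iii) set $q$ to dominate the column-sum expansion $C_{max}$ of all length-$\le N$ inductive matrices, so that $\mathcal{I}^{-m}\big(\Delta_{\beta^{-1}2^k,2^{k+1}}\cap p^{-1}_\Delta(U)\times\{\sigma\}\big)\subset \Delta_{\beta^{-1}2^k,q2^{k+1}}\cap p^{-1}_\Delta(U_0)\times\{\sigma_0\}$; (iv) use piecewise-linearity (bounded distortion on the cylinder, constant depending only on $N$) to compare Lebesgue measures of a set and its $\mathcal{I}^{-m}$-preimage; (v) combine (i)–(iv): the preimage under $\mathcal{I}^m$ of the $\sigma$-slice estimate is controlled by the $\sigma_0$-slice estimate from step (i), applied with $U_0$ in place of $U$ (legitimate once $k_0$ is enlarged to handle $U_0$), and since preimages under $\mathcal{I}^m$ of $\bigcup_i\mathcal{I}^{-i}(S)$ land again in $\bigcup_i\mathcal{I}^{-i}(S)$, we recover the desired lower bound for the $\sigma$-slice with $C^{(2)}_{\mathcal{I}}=C^{(2)}_{\mathcal{I}}(\lambda_0(U))$.

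The main obstacle I anticipate is step (iv): making precise that the preimage measure is \emph{comparable} (a two-sided bound) rather than merely dominated, uniformly over the finitely many length-$\le N$ cylinders. Because $\mathcal{I}$ acts by elementary matrices of determinant $\pm 1$, the Jacobian of $\mathcal{I}^m$ on a cylinder is $\pm 1$, so $\mathcal{I}^m$ is in fact Lebesgue-measure-preserving on $\mathbb{R}_+^d$; the only subtlety is the interplay between this and the normalization (the $|\bar x|$-rescaling), which is exactly what $q$ is introduced to control. Once the measure-preservation of $\mathcal{I}^m$ in the unnormalized coordinates is invoked, steps (iii) and (v) are bookkeeping, and the dependence of constants on $\lambda_0(U)$ (through $N$) and of $k_0$ on $U$ (through $U_0$) comes out as stated.
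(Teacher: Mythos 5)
Your proposal follows the same route as the paper: pigeonhole Proposition \ref{hits well} over $\#\mathcal{P}$ to locate a single permutation $\tau$ with the full lower bound, then use strong connectedness of the graph to find a matrix of induction $M$ whose simplex $M_{\Delta}$ lies inside $U$ and whose itinerary joins $\sigma$ to $\tau$, push the $\tau$-slice set forward by $M$, invoke that $M$ has determinant $\pm 1$ (so preserves $\lambda$), and set $q=2^n$ to absorb the expansion of $|\bar x|$ over the $n$ induction steps. That is exactly the paper's argument, so I would call the proposal essentially correct and essentially the same.

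Two things to tighten up. First, there is a directional inconsistency in the writeup: in the body you write that $I^m$ maps $U_0\times\{\sigma_0\}$ \emph{into} $U\times\{\sigma\}$, whereas in step (ii) you say the itinerary carries $(U_0,\sigma)$ \emph{back to} $(\cdot,\sigma_0)$. Only the second version works: you need the inductive matrix $M=M((\bar x,\sigma),n)$ associated to a path \emph{from} $\sigma$ \emph{to} $\sigma_0$ (so $\pi_2(I^n(\bar x,\sigma))=\sigma_0$), because such an $M$ sends $\sigma_0$-slice points to $\sigma$-slice points and increases $|\bar x|$ by a factor at most $2^n$; this is what places the image in $\Delta_{\beta^{-1}2^k,\,q\,2^{k+1}}$. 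Pulling the $\sigma$-slice back instead would shrink $|\bar x|$ and yield an inclusion in the wrong direction, giving an upper rather than a lower bound. Second, the detour through a shrunken $U_0\subset U$ and a re-application of Proposition \ref{hits well} with $U_0$ in place of $U$ is not needed: since $M_{\Delta}\subset U$, the image of the entire $\tau$-slice of $p_{\Delta}^{-1}(U)$ under $M$ already lands inside $p_{\Delta}^{-1}(M_{\Delta})\subset p_{\Delta}^{-1}(U)$, so one can push forward the estimate on the original $U$ directly. Your version would still close after the extra bookkeeping ($\lambda_0(U_0)\gtrsim\lambda_0(U)$, enlarge $k_0$), but the paper's route is shorter and avoids having to quantify the comparability of $U_0$ to $U$.
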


\begin{cor} \label{hits real well}There exists $q'$ depending only on  $\lambda_0(U)$ and $k_0$ depending only on $U$ such that for any $k>k_0$ and each pair $\sigma, \sigma' \in  \mathcal{P}$ we have that
 \begin{multline}\lambda\left( \left(\Delta_{\beta^{-1}2^k,q'2^{k+1}} \cap p^{-1}_{\Delta}(U)\right)
 \times \{\sigma\}\cap \underset{i=1}{\overset{\infty}{\cup}}\mathcal{I}^{-i}(S'\times \{\sigma'\})\right)
 \\> C^{(3)}_{\mathcal{I}}\lambda_0(U)\lambda(\Delta_{\beta^{-1}2^k,2^{k+1}})\frac{\lambda(S)}{\lambda(\Delta_{1,2})}.
 \end{multline}
\end{cor}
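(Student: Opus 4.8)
The plan is to deduce Corollary \ref{hits real well} from Corollary \ref{hits well perm} by a single further averaging argument, using the fact that the map $\mathcal{I}$ already mixes the $\mathcal{P}$-coordinate. The starting point is that Corollary \ref{hits well perm} controls the measure of preimages of $S = S'\times\mathcal{P}$; we must upgrade this to preimages of $S'\times\{\sigma'\}$ for a single permutation $\sigma'$. The key observation is that $S'\times\{\sigma'\}$ is one of the $|\mathcal{P}|$ "slices" whose disjoint union is $S$, so $\bigcup_i \mathcal{I}^{-i}(S) = \bigcup_{\tau\in\mathcal{P}} \bigcup_i \mathcal{I}^{-i}(S'\times\{\tau\})$; the trouble is that this union need not be disjoint and the shares need not be equal, so we cannot just divide by $|\mathcal{P}|$.

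First I would fix the open set $U$, take $k_0$ and $q$ from Corollary \ref{hits well perm}, and fix $k>k_0$ and the pair $\sigma,\sigma'$. I would then look at the set $W := \bigl(\Delta_{\beta^{-1}2^k, q2^{k+1}}\cap p_\Delta^{-1}(U)\bigr)\times\{\sigma\}\cap \bigcup_i\mathcal{I}^{-i}(S)$ produced by Corollary \ref{hits well perm}. For a point in $W$, some forward iterate $\mathcal{I}^i$ lands in $S = S'\times\mathcal{P}$, hence in $S'\times\{\tau\}$ for exactly one $\tau=\tau(\bar x)$. So $W$ decomposes (up to measure zero, using irrationality and Lemma-type genericity as earlier in the paper) into the $|\mathcal{P}|$ pieces $W_\tau := W\cap\bigcup_i\mathcal{I}^{-i}(S'\times\{\tau\})$. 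Pigeonhole gives at least one $\tau_0$ with $\lambda(W_{\tau_0}) \geq \lambda(W)/|\mathcal{P}|$. The remaining task is to move from this unspecified $\tau_0$ to the prescribed $\sigma'$. For this I would invoke the structure of the graph $\mathcal{G}$: by the connectivity assumption (assumption (1), and the Rauzy-graph path lemma in the model case), there is a finite admissible path in $\mathcal{G}$ from $\sigma'$ to $\tau_0$, say of length $m = m(\sigma',\tau_0)$, and this path is realized on an open subcone of positive measure — concretely, a suitable product of the elementary matrices $M(\cdot,1)$ sends a positive-measure sub-wedge of $\Delta\times\{\sigma'\}$ into $\Delta\times\{\tau_0\}$ under $\mathcal{I}^m$. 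Pulling $W_{\tau_0}$ back by this $m$-step branch, one obtains a positive-measure subset of $\bigl(\Delta_{\beta^{-1}2^k, q'2^{k+1}}\cap p_\Delta^{-1}(U')\bigr)\times\{\sigma'\}$, for a slightly enlarged $q'$ (to absorb the bounded distortion of the $m$-step matrix product, which only changes norms by a factor depending on $m$, hence on $\mathcal{P}$ alone) and a nonempty open $U'\subset\Delta$, whose forward orbit still hits $S'\times\{\tau_0\}$ and therefore $S'\times\{\sigma'\}$ is reached; more precisely one arranges the path so that the hit on $S'\times\{\tau\}$ is replaced by a hit on $S'\times\{\sigma'\}$.

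A cleaner route, which I would actually prefer, avoids the $U'\neq U$ annoyance: apply Corollary \ref{hits well perm} not to $U$ but to each of finitely many open sets, and instead realize the permutation adjustment \emph{after} the return to $S$, not before the start. That is, given the prescribed $\sigma'$, choose for each $\tau$ a path in $\mathcal{G}$ from $\tau$ to $\sigma'$; since $\mathcal{I}$ restricted to the corresponding branch is a measurable bijection onto its image with Jacobian bounded above and below in terms of $|\mathcal{P}|$ and the (bounded) path length, the preimage $\bigcup_i\mathcal{I}^{-i}(S'\times\{\sigma'\})$ contains, up to this bounded factor, a definite fraction of $\bigcup_i\mathcal{I}^{-i}(S'\times\{\tau\})$ intersected with each wedge; summing over $\tau$ and feeding in Corollary \ref{hits well perm} yields the claim with $C^{(3)}_{\mathcal{I}} = C^{(2)}_{\mathcal{I}}/(|\mathcal{P}|\cdot\kappa)$ for the distortion constant $\kappa$, and with $q'$ the product of $q$ and the maximal norm blow-up along a shortest path in $\mathcal{G}$ — both depending only on $\mathcal{I}$ and $\lambda_0(U)$ as required. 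I would carry this out in the order: (i) record the graph-path and bounded-distortion facts as a lemma; (ii) decompose $\bigcup_i\mathcal{I}^{-i}(S)$ over the slices $S'\times\{\tau\}$; (iii) apply Corollary \ref{hits well perm}; (iv) pigeonhole and transfer along a path to $\sigma'$; (v) collect constants.

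The main obstacle I anticipate is bookkeeping with the scale parameter: the path-transfer step composes $\mathcal{I}$ with itself a bounded number of extra times, which rescales $|\bar x|$ and thus forces the enlargement from $q$ to $q'$, and one must check this rescaling is bounded purely in terms of $\mathcal{I}$ (so that $q'$ depends only on $\lambda_0(U)$, exactly as stated) and does not interact badly with the lower cutoff $\beta^{-1}2^k$ — here the $\beta$-balancedness of the induction matrices, available from assumption (3) and the polynomial growth it guarantees, is what keeps the distortion uniform. The measure-zero caveats (ensuring the slice decomposition of $W$ is genuinely a partition) are routine given that irrational $\bar x$ have well-defined infinite itineraries, and I would dispatch them by the same genericity remarks used earlier in the section.
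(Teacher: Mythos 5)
Your proposal tries to upgrade Corollary~\ref{hits well perm} (which controls preimages of $S = S'\times\mathcal{P}$) by decomposing over the $|\mathcal{P}|$ target slices $S'\times\{\tau\}$, pigeonholing to a dominant $\tau_0$, and then ``transferring'' from $\tau_0$ to the prescribed $\sigma'$ along a path in the graph. The transfer step is the gap, and it does not work in either of your two routes. The quantity you wish to change --- which slice $S'\times\{\tau\}$ the forward orbit lands in --- is determined by the point and cannot be altered by a pull-back. In route one, pulling $W_{\tau_0}$ back along a branch of length $m$ only changes the permutation of the \emph{wedge} (from $\sigma$ to the start of the branch), not the slice of $S$ that the forward orbit eventually hits: the pulled-back set has forward itinerary that first traverses the $m$-step branch, then follows the orbit of $W_{\tau_0}$, and still terminates in $S'\times\{\tau_0\}$, not $S'\times\{\sigma'\}$. (It is also at the wrong permutation $\sigma'$ rather than the required $\sigma$.) In route two you propose to adjust the permutation after the orbit reaches $S'\times\{\tau\}$, claiming that along a branch from $\tau$ to $\sigma'$ the set $\bigcup_i\mathcal{I}^{-i}(S'\times\{\sigma'\})$ captures a definite fraction of $\bigcup_i\mathcal{I}^{-i}(S'\times\{\tau\})$ in each wedge. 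But once the orbit hits $S'\times\{\tau\}\subset\Delta_{1,2}\times\{\tau\}$, the next $m$ steps \emph{decrease} $|\bar x|$, so when the orbit reaches permutation $\sigma'$ its $\pi_1$-coordinate has left $\Delta_{1,2}$ and need not lie in $S'$; the two unions of preimages are not related by any bounded-distortion inclusion.

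The paper avoids this entirely by reversing the logic: it does not invoke Corollary~\ref{hits well perm}. It applies Lemma~\ref{I condition} directly to the target slice $S'\times\{\sigma'\}$, obtaining a lower bound on $\lambda\bigl(\bigcup_i\mathcal{I}^{-i}(S'\times\{\sigma'\})\cap\Delta_{2^k\beta^{-1},2^{k+1}}\times\mathcal{P}\bigr)$ with no wedge-permutation constraint at all, then pigeonholes over the permutation $\tau$ in which these preimages live, and only then applies finitely many matrices of induction $M_i=M((\bar x_i,\sigma),n_i)$ with $n_i<n_\tau$, $(M_i)_\Delta\subset U$, all starting from $\sigma$ and reaching $\tau$, covering at least half of $U$. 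Applying $M_i$ moves the set from $\Delta\times\{\tau\}$ into $p_\Delta^{-1}(U)\times\{\sigma\}$ while \emph{preserving} membership in $\bigcup_i\mathcal{I}^{-i}(S'\times\{\sigma'\})$, since the orbit of $M_i\bar z$ simply prepends the $M_i$-itinerary before following that of $\bar z$; the scale inflates by at most $2^{n_\tau}$, giving $q'=2^{\max n_\tau}$. In short, one must fix the target slice $S'\times\{\sigma'\}$ at the outset and adjust the wedge permutation by pulling back, rather than fixing the wedge and trying to steer where the forward orbit lands.
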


Note the $k_0$ may be different than in Proposition \ref{hits well}.

\begin{prop}\label{compare} If $U \subset \Delta$ is measurable with $\lambda(U)>0$, $\sigma, \sigma' \in \mathcal{P}$, $q'>1$ is as in Corollary \ref{hits real well}, $\epsilon>0$ and $r,l \leq \frac{q'}{\epsilon}$, then there exist $k_0, c>0$ depending on $q'$  such that for all $k>k_0$
$$
 \frac{ \lambda\left(\left(p_{\Delta}^{-1}(U)
\cap \Delta_{2^k(1+r\epsilon), 2^{k}(1+(r+1)\epsilon)}\right)\times \{\sigma\} \cap
\underset{i=1}{\overset{\infty}{\cup}}\mathcal{I}^{-i}(S)\right)}{\lambda\left(\left(p_{\Delta}^{-1}(U) \cap \Delta_{2^k(1+l\epsilon),
2^{k}(1+(l+1)\epsilon)}\right) \times \{\sigma\} \cap \underset{i=1}{\overset{\infty}{\cup}}\mathcal{I}^{-i}(S' \times \{\sigma'\})\right)}>c.
$$
\end{prop}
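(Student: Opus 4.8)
The plan is to reduce the desired two-sided bound to the one-sided estimates already available in Corollary \ref{hits real well}, by exploiting the scaling structure of the algorithm. The numerator is handled directly: the set $p_\Delta^{-1}(U)\cap\Delta_{2^k(1+r\epsilon),\,2^k(1+(r+1)\epsilon)}$ sits inside $\Delta_{\beta^{-1}2^k,\,q'2^{k+1}}$ once $k$ is large (using $r,l\le q'/\epsilon$ to control the upper radius), so Corollary \ref{hits real well} — applied with the set $S'\times\{\sigma'\}$ replaced by $S$, equivalently summing over $\sigma'\in\mathcal P$ — gives a lower bound of the form $C\,\lambda_0(U)\,\lambda(\Delta_{\beta^{-1}2^k,2^{k+1}})\,\lambda(S)/\lambda(\Delta_{1,2})$ for the numerator. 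Here I would be slightly careful that Corollary \ref{hits real well} as stated controls $\mathcal I^{-i}(S'\times\{\sigma'\})$, and that to get the numerator (involving $S=S'\times\mathcal P$) I simply sum the corollary over the finitely many $\sigma'$.

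For the denominator I would argue in the opposite direction, bounding it \emph{above} by a constant multiple of the same quantity $\lambda_0(U)\,\lambda(\Delta_{\beta^{-1}2^k,2^{k+1}})\,\lambda(S)/\lambda(\Delta_{1,2})$. The cleanest route is a change-of-variables/scaling argument: the slab $\Delta_{2^k(1+l\epsilon),\,2^k(1+(l+1)\epsilon)}$ has Lebesgue measure comparable (with constants depending only on $d$, $q'$, $\epsilon$) to $\lambda(\Delta_{\beta^{-1}2^k,2^{k+1}})$, and the set $\bigcup_i\mathcal I^{-i}(S'\times\{\sigma'\})$ is contained in the full backward orbit, so its intersection with any fixed slab is at most the measure of that slab. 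This gives
\[
\lambda\!\left(\left(p_\Delta^{-1}(U)\cap\Delta_{2^k(1+l\epsilon),\,2^k(1+(l+1)\epsilon)}\right)\times\{\sigma\}\cap\bigcup_{i=1}^\infty\mathcal I^{-i}(S'\times\{\sigma'\})\right)\le \lambda\!\left(p_\Delta^{-1}(U)\cap\Delta_{2^k(1+l\epsilon),\,2^k(1+(l+1)\epsilon)}\right),
\]
and the right side is $\le C'\,\lambda_0(U)\,\lambda(\Delta_{\beta^{-1}2^k,2^{k+1}})$ by homogeneity of Lebesgue measure on the cone. Combining the lower bound on the numerator with this upper bound on the denominator, the ratio is bounded below by a constant $c$ depending only on $q'$ (through $\epsilon$, $\beta$, $d$ and the implied constants $C^{(3)}_{\mathcal I}$), which is exactly the claim; the dependence of $k_0$ on $q'$ comes from the two invocations of Corollary \ref{hits real well} (its $k_0$ depends on $U$, and one also needs $k$ large enough that $q'2^{k+1}$ dominates the slab endpoints $2^k(1+(r+1)\epsilon)$ for all admissible $r$).

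The main obstacle I anticipate is making sure the constant $c$ genuinely depends only on $q'$ and not on $U$, $S$, or $k$: both the numerator lower bound and the denominator upper bound carry a factor $\lambda_0(U)\,\lambda(\Delta_{\beta^{-1}2^k,2^{k+1}})\,\lambda(S)/\lambda(\Delta_{1,2})$ (the denominator after bounding $\lambda(p_\Delta^{-1}(U)\cap\text{slab})$ by this quantity, and after trivially inserting the harmless factor $\lambda(S)/\lambda(\Delta_{1,2})\le 1$ — or more honestly, one should track that the denominator upper bound does \emph{not} gain a factor $\lambda(S)$, so the ratio is actually bounded below by $c\,\lambda(S)/\lambda(\Delta_{1,2})$ unless one is more careful, which would then force a hypothesis or rescaling). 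Resolving this cleanly may require either an additional normalization of $S'$ or absorbing $\lambda(S)/\lambda(\Delta_{1,2})$ into how the proposition is \emph{used} downstream rather than into $c$ itself; I would check the intended application before fixing the exact form of the constant. A secondary technical point is verifying that $p_\Delta^{-1}$ of an open $U$ intersected with a thin slab still has measure comparable to $\lambda_0(U)$ times the slab measure uniformly in $k$, which is immediate from the product structure $\mathbb R_+^d\setminus\{0\}\cong \Delta\times(0,\infty)$ under $\bar v\mapsto(p_\Delta(\bar v),|\bar v|)$ with Jacobian a power of $|\bar v|$.
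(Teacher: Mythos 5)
Your approach breaks down at the numerator lower bound, and the gap is not the one you flag at the end; it is more fundamental. Corollary~\ref{hits real well} gives a lower bound for the measure of $\underset{i=1}{\overset{\infty}{\cup}}\mathcal{I}^{-i}(S'\times\{\sigma'\})$ intersected with the \emph{entire} wedge $\left(p_{\Delta}^{-1}(U)\cap\Delta_{\beta^{-1}2^k,\,q'2^{k+1}}\right)\times\{\sigma\}$, whose width in the radial direction is comparable to $2^k$. Containment of the thin slab $\Delta_{2^k(1+r\epsilon),\,2^k(1+(r+1)\epsilon)}$ (radial width $\epsilon 2^k$) inside that wedge gives you no information in the direction you need: a lower bound on a superset says nothing about the measure on a subset. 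As far as the corollary is concerned, all the mass could be concentrated in a slab far from the one indexed by $r$, and the numerator could be zero. The whole point of the proposition is precisely to show that the preimages $\underset{i}{\cup}\mathcal{I}^{-i}(S)$ do not pile up in one thin radial band and neglect another, and that cannot be deduced from the coarser estimate.

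The paper's proof supplies the missing idea, which your sketch never touches: it uses the loop permutation matrix $\tilde{N}$. A positive proportion of the matrices in $\mathcal{V}_k(U)$ can be written as $A\tilde{N}^mB$ with controlled balance and distance-to-boundary conditions on $A$ and $B$; replacing $\tilde{N}^m$ by $\tilde{N}^{m+s}$ moves $|A\tilde{N}^{m+s}B\bar{u}|$ by a definite positive amount of order $\epsilon 2^k$ per unit increment of $s$, uniformly over $\bar{u}\in\Delta$, while preserving membership in the wedge and disjointness of images. This turns mass sitting in the slab with offset $l\epsilon$ into mass in the slab with offset $l\epsilon$ shifted by one step, and iterating $|r-l|\le 2q'/\epsilon$ times transports a fixed fraction of the measure from the $l$-slab to the $r$-slab (or vice versa), with a loss factor that depends only on $d$, $\beta$, $\zeta$, $\kappa$ and hence ultimately on $q'$. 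That is also why the constant $c$ ends up depending only on $q'$: no appeal is made to the absolute size of $\lambda(S)$ or $\lambda_0(U)$, in contrast to the $\lambda(S)$ residue you correctly notice appears in your version. Your denominator upper bound is fine but is not what the paper uses; the comparison in the paper is a direct two-slab transfer, not a numerator-lower/denominator-upper sandwich.
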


Informally, this proposition states that locally $\underset{i=1}{\overset{\infty}{\cup}}\mathcal{I}^{-i}(S)$ intersects
slices of wedges with comparable volume.

The proof of Theorem \ref{main} also requires another result (Lemma \ref{run condition}) that allows one to apply the propositions at any scale which will be stated later.\\

\subsection{ Induction dynamics}
Here we make more precise the assumptions which were made in the end of Subsection 3.3.

 \begin{enumerate}
\item It contains a loop permutation (see Definition \ref{loop}). This is stronger than what we need. We need that there is a sequence of steps of induction which start and end at the same permutation such that the associated matrix has powers whose operator norm grows polynomially.
 \item For Lebesgue almost every $(\bar x,\sigma) \in \mathbb{R}^d_+ \times \mathcal{S}$ we have that 
 $$
 \underset{n \to \infty}{\lim} \pi_1(\mathcal{I}^n(\bar x,\sigma))=\bar 0.
$$
\item \label{balance assumption}
There exist $\rho>0,K>1$ and $\nu_0>1$ depending only on $\mathcal{I}$ such that for any matrix of induction $M'=M((\bar x,\sigma),n)$ we have
\begin{multline*}
\lefteqn{\lambda_0(\{T: \pi(T)=\sigma, T \in M'_{\Delta} \exists m>n \text{ such that } M(T,m) \text{ is }}\\ \nu_0 \text{-balanced and }  C_{max} (M(T,m)) <K^d C_{max}(M')\})> \rho \lambda_0 ( M'_{\Delta}).\end{multline*}
This result is \cite[Corollary 1.7]{ker} which is a consequence of Assumption 1 in Subsection 3.3.
\item The graph of $\mathcal{P}$ is connected.
\end{enumerate}

 First let us notice that
$$
\underset{n=1}{\overset{\infty}{\cup}}\mathcal{I}^{-n}(B(\bar x,\delta_1)\times \{\sigma\})= \underset{n, (\bar y,\sigma'):\pi_2(\mathcal{I}^n(\bar y,\sigma'))=\sigma}{\cup}M((\bar y,\sigma'),n)B(\bar x,
\delta_1)\times \{\sigma'\}.
$$

\subsection{Proof of Proposition \ref{hits well}}
For readability reasons we suppress the permutation. Permutations will be introduced in the the proofs of Corollaries \ref{hits well perm} and \ref{hits real well}.

We first prove an independence result of induction and then use it
to prove the proposition.

\begin{lem} \label{I condition} There exists a constant $c_{\mathcal{I}}$ such that
 for any $\bar v \in \Delta$, $n\in \mathbb{N}$ there exist $k_0$ depending on $\bar v$ and $n$ so that if 
 $k>k_0$ 
\begin{multline}\lefteqn{\lambda_0(\{\bar z \in
M(\bar v ,n)_{\Delta}: \exists q \text{ with } M(\bar z,q)\in
\mathcal{M}_{\beta}, C_{max}(M(\bar z, q))\, \in
[2^k,2^{k+1}]\})}\\ \geq c_{\mathcal{I}} \lambda_0(M(\bar v,n)_{\Delta}).\end{multline}
\end{lem}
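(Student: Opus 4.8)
The statement is essentially a uniform lower bound saying that, starting from any cylinder $M(\bar v,n)_\Delta$, a definite proportion of normalized points can be further induced to produce a $\beta$-balanced matrix whose maximal column sum lands in the dyadic window $[2^k,2^{k+1}]$, provided $k$ is large enough (how large depending on $\bar v,n$). The natural approach is a two-stage argument. In the first stage I would invoke Assumption \ref{balance assumption} above (Kerckhoff's \cite[Corollary 1.7]{ker}) applied to the matrix $M'=M(\bar v,n)$: it gives a subset of $M'_\Delta$ of measure at least $\rho\,\lambda_0(M'_\Delta)$ consisting of points $T$ from which one can reach, at some later time $m>n$, a $\nu_0$-balanced matrix $M(T,m)$ with $C_{\max}(M(T,m))<K^d\,C_{\max}(M')$. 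The point of this step is only to get, after finitely many more steps, a \emph{balanced} matrix of controlled size; the size $C_{\max}(M(T,m))$ is then bounded above by the fixed constant $K^d C_{\max}(M(\bar v,n))$, which does not depend on $k$.

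\textbf{Second stage: growing into the window $[2^k,2^{k+1}]$.} From a balanced matrix $N$ of bounded size I want to continue inducing so as to inflate $C_{\max}$ up into $[2^k,2^{k+1}]$ while keeping the matrix $\beta$-balanced (with the new, larger constant $\beta\ge\nu_0$ that we are free to fix). Here is where the loop hypothesis (Assumption 3 / item 1 of Subsection 4.1) enters: there is a finite word of induction steps that begins and ends at a fixed permutation and whose associated matrix $L$ has polynomially-growing powers; more importantly, repeatedly appending blocks that pass through the loop multiplies column sums by a \emph{bounded} multiplicative factor per block, so by concatenating an appropriate number of such blocks one can hit $C_{\max}$ into \emph{any} dyadic window $[2^k,2^{k+1}]$ once $k$ exceeds a threshold determined by $C_{\max}(N)$ — hence by $\bar v,n$ and the fixed constants. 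One must check that these extra loop blocks preserve $\beta$-balancedness for a suitable fixed $\beta$; this is the place to use that passing through a loop (a column operation adding one column to itself repeatedly, or the structure of $L$) does not destroy the comparability of column sums, possibly after enlarging $\beta$ once and for all. Combining the two stages, the set of $\bar z\in M(\bar v,n)_\Delta$ that succeed has $\lambda_0$-measure at least $\rho\,\lambda_0(M(\bar v,n)_\Delta)$ times a further fixed fraction coming from the (finitely many, bounded-distortion) induction steps used in the inflation, giving $c_{\mathcal I}$ independent of $\bar v,n,k$.

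\textbf{Measure bookkeeping.} Throughout I would use the standard fact, recorded just before the proof, that $\bigcup_n \mathcal I^{-n}(B)$ decomposes as a disjoint (up to measure zero) union of sets $M_\Delta$ over induction matrices, and that the normalized Lebesgue measure of $M_\Delta$ is comparable to $C_{\max}(M)^{-d}$ up to a factor depending only on $\beta$ when $M$ is $\beta$-balanced (bounded distortion of projective images of balanced cones). This lets me convert ``a definite $\lambda_0$-proportion of $M(\bar v,n)_\Delta$ can be steered into a balanced matrix of given size'' into the measure-theoretic conclusion, since a single step of induction changes $\lambda_0$ of the relevant piece by a factor bounded above and below. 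Summing over the (uniformly bounded number of) admissible continuation words and using that distinct continuation matrices have disjoint projective images yields the asserted inequality with $c_{\mathcal I}$ depending only on $\mathcal I$ (via $\rho$, $K$, $\nu_0$, $\beta$ and the loop word).

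\textbf{Main obstacle.} The delicate point is the second stage: controlling balancedness \emph{and} the exact dyadic scale simultaneously. One must ensure that the loop-block multiplier on $C_{\max}$ is bounded by a constant $<2$ raised to the block length, so that the reachable values of $C_{\max}$ form a set with no gaps larger than a factor $2$ eventually — only then is every window $[2^k,2^{k+1}]$ guaranteed to be hit for all large $k$ rather than only along a subsequence. Equivalently, one needs a family of continuation words of \emph{graded} lengths whose matrices have $C_{\max}$ filling up $[C_{\max}(N),\infty)$ densely enough on the logarithmic scale, all while staying $\beta$-balanced. Establishing this ``no dyadic gap'' property from the loop structure, with constants uniform in the starting balanced matrix, is the crux; the rest is routine distortion estimates and the application of Kerckhoff's corollary.
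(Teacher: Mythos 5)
Your first stage is fine, but the second stage has a genuine gap, and in fact the paper proves the lemma by a different and more efficient route that avoids the inflation step entirely.

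The problem with your ``inflate via loop blocks'' plan is twofold. First, the loop matrix $\tilde N$ is of the form $\mathrm{Id}+E_{ij}$, so $\tilde N^m=\mathrm{Id}+mE_{ij}$: appending such a block to a matrix $N$ grows \emph{one} column of $N$ linearly in $m$ while leaving the others fixed, which \emph{destroys} $\beta$-balancedness rather than preserving it. No fixed enlargement of $\beta$ survives arbitrarily many loop applications; you would need $\beta$ to grow with $k$, defeating the purpose. (You could instead try to append a positive cycle matrix $L$, which does preserve balancedness, but then you meet the second problem.) Second, to steer a point from a bounded-size balanced matrix up to scale $2^k$ you must prescribe $\Theta(k)$ further induction steps, and each prescribed step retains only a fixed fraction of the measure of the current cylinder; the surviving proportion therefore decays geometrically in $k$, so the constant you produce is not independent of $k$. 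The ``no dyadic gap'' issue you flag as the crux is actually secondary; the measure loss and the failure of balance under inflation are the real obstructions.

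The paper's proof sidesteps all of this with a stopping-time decomposition. For each $\bar y\in M(\bar v,n)_\Delta$ set $m_{\bar y}=\max\{m: C_{\max}(M(\bar y,m))<2^{k+1}K^d\}$; the cylinders $M(\bar y,m_{\bar y})_\Delta$ partition $M(\bar v,n)_\Delta$ up to measure zero (this is where $k_0$ enters: $k$ must be large enough that these stopping cylinders are genuine refinements of $M(\bar v,n)_\Delta$). One then applies Assumption 3 (Kerckhoff's Corollary 1.7) \emph{at each stopping cylinder}, not at $M(\bar v,n)_\Delta$: in each $M(\bar y,m_{\bar y})_\Delta$, a proportion $\geq\rho$ of points $\bar z$ acquire a $\nu_0$-balanced matrix $M(\bar z,q_{\bar z})$ with $C_{\max}$ controlled by $K^d\,C_{\max}(M(\bar y,m_{\bar y}))$. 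Since each single induction step at most doubles $C_{\max}$, by passing a few more steps one lands $C_{\max}$ in $[2^k,2^{k+1}]$ while only worsening the balance constant to $\beta=\nu_0K^d$. Summing over the stopping cylinders gives the claim with $c_{\mathcal I}=\rho$. In short: the uniformity of Assumption 3 in the past matrix $M'$ lets you get balanced matrices \emph{directly at the target scale}, so no inflation (and no use of the loop hypothesis, which only enters later in Proposition \ref{compare}) is needed.
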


\begin{proof}[Proof of Lemma \ref{I condition}] 
Consider $k>k_0$ and we verify the lemma for this $k$. Let $\bar y \in M(\bar x, n)_{\Delta}$ and $K$ be as in Assumption 3
$$
m_{\bar{y}}=\max\{m: C_{max}(M(\bar{y},m)) <2^{k+1}K^d\}.
$$

Let $\beta=K^d\nu_0$ where $\nu_0$ is as in Assumption 3.
 Let $\bar{z}\in M(\bar{y},m_{\bar{y}})_{\Delta}$ be an element of the measure at least
$\rho\lambda_0(M(\bar{y},m_{\bar{y}})_{\Delta})$ set 
contained in $M(\bar{y},m_{\bar{y}})_{\Delta}$ guaranteed by Assumption 3. 
So there exists $q_{\bar{z}}\geq m_{\bar{y}}$ with 
$$
M(\bar{z},q_{\bar{z}})\in \mathcal{M}_{\nu_0} \text{ and } C_{max}(M(\bar{z},q_{\bar{z}}) \in [\frac{2^{k+1}}{K^d},2^{k+1}].
$$
Because $C_{\max}(M(\bar{u},r+1)) \leq 2 C_{\max}(M(\bar{u},r))$ for any $\bar{u},r$,
there exists $q'_{\bar{z}}\geq q_{\bar{z}}$ such that $C_{\max}M(\bar{z},q'_{\bar{z}}) \in [2^k,2^{k+1}]$. 
Because $C_{\max}M(\bar{z},q'_{\bar{z}}) \leq K^d C_{\max}M(\bar{z},q_{\bar{z}})$ it follows that $M(\bar{z},q'_{\bar{z}})\in\mathcal{M}_{\nu_0K^d}=\mathcal{M}_{\beta}. $ Treating the various $M(\bar{y},m_{\bar{y}})_{\Delta}$ which form a partition of a full measure subset of $ M(\bar{v},n)_{\Delta}$ the lemma follows.
\end{proof}

\begin{prop} \label{bal} (Kerckhoff \cite[Corollary 1.2]{ker}) If $M$ is $\beta$-balanced and
$W \subset \Delta$ is a measurable set, then
$$
\frac{\lambda_0(W)}{\lambda_0(\Delta)}< \frac{\lambda_0(p_{\Delta}(MW))}{\lambda_0(M_{\Delta})}\beta^{d}.
$$ 
\end{prop}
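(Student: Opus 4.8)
The statement is Kerckhoff's Corollary~1.2, so one option is simply to cite it; but here is a self\nobreakdash-contained plan. I would first dispose of degenerate cases: we may assume $\det M\neq 0$ (otherwise $M\mathbb{R}_+^d$ is lower dimensional, $\lambda_0(M_\Delta)=0$, and the right-hand side is not even finite) and $\lambda_0(W)>0$ (the assertion being a comparison of two ratios). The proof rests on two elementary facts about $d$\nobreakdash-dimensional Lebesgue measure $\mathrm{vol}_d$ on $\mathbb{R}_+^d$: first, $\mathrm{vol}_d(ME)=|\det M|\,\mathrm{vol}_d(E)$ for every measurable $E$; second, for measurable $A\subseteq\Delta$ and $\rho>0$ the solid cone $\rho\widehat A:=\{t\bar y:\bar y\in A,\ 0\le t\le\rho\}$ satisfies $\mathrm{vol}_d(\rho\widehat A)=\rho^{d}\kappa\,\lambda_0(A)$ for a constant $\kappa=\kappa(d)$ independent of $A$ (the ``base times height over $d$'' formula for cones over the hyperplane $\{|\bar x|=1\}$). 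The third ingredient is the estimate on $|M\bar x|$: for $\bar x\in\Delta$ one has $|M\bar x|=\sum_{j}C_j(M)\,x[j]$, a convex combination of the column sums, so $\min_jC_j(M)\le|M\bar x|\le C_{max}(M)$; since $M$ is $\beta$-balanced this upgrades to
$$\frac{C_{max}(M)}{\beta}\le|M\bar x|\le C_{max}(M)\qquad(\bar x\in\Delta).$$

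Next I would establish two cone inclusions (here $\widehat A=1\cdot\widehat A$, and recall $M_\Delta=p_\Delta(M\Delta)$). \emph{(a)} $M\widehat W\subseteq C_{max}(M)\,\widehat{p_\Delta(MW)}$: a generic point of $M\widehat W$ is $tM\bar x=(t|M\bar x|)\,p_\Delta(M\bar x)$ with $\bar x\in W$, $t\in[0,1]$, and here $p_\Delta(M\bar x)\in p_\Delta(MW)$ while $t|M\bar x|\le C_{max}(M)$. \emph{(b)} $(C_{max}(M)/\beta)\,\widehat{M_\Delta}\subseteq M\widehat\Delta$: a generic point of the left-hand cone is $s\bar y$ with $\bar y\in M_\Delta$ and $0\le s\le C_{max}(M)/\beta$; writing $\bar y=p_\Delta(M\bar x)$ with $\bar x\in\Delta$ gives $M\bar x=|M\bar x|\,\bar y$, hence $s\bar y=M\big(\tfrac{s}{|M\bar x|}\,\bar x\big)$, and $\tfrac{s}{|M\bar x|}\le\tfrac{C_{max}(M)/\beta}{|M\bar x|}\le1$ by the lower bound above, so $\tfrac{s}{|M\bar x|}\bar x\in\widehat\Delta$.

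Finally, applying $\mathrm{vol}_d$ to (a) and (b) and invoking the two facts yields
$$|\det M|\,\kappa\,\lambda_0(W)\le C_{max}(M)^{d}\kappa\,\lambda_0\big(p_\Delta(MW)\big),\qquad\Big(\tfrac{C_{max}(M)}{\beta}\Big)^{d}\kappa\,\lambda_0(M_\Delta)\le|\det M|\,\kappa\,\lambda_0(\Delta).$$
Multiplying these two inequalities and cancelling the common factors $|\det M|$, $\kappa$ and $C_{max}(M)^{d}$ gives $\lambda_0(W)\,\lambda_0(M_\Delta)\le\beta^{d}\,\lambda_0(p_\Delta(MW))\,\lambda_0(\Delta)$, i.e.\ the claimed inequality. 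Strictness is read off the second displayed inequality: because $\beta>1$, the cone $M\widehat\Delta$ reaches radius up to $C_{max}(M)$ along every direction of $M_\Delta$, whereas $(C_{max}(M)/\beta)\widehat{M_\Delta}$ reaches only $C_{max}(M)/\beta<C_{max}(M)$, so the inclusion (b) is proper on a set of positive $\mathrm{vol}_d$-measure and the second inequality is strict. I do not expect a genuine obstacle here: the only point requiring care is the consistency of the normalization constant $\kappa$ in the cone-volume identity and the bookkeeping of the various scalings; the rest is linear algebra.
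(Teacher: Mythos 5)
The paper does not prove this proposition at all: it is stated as a direct citation of Kerckhoff's Corollary~1.2, so there is no ``paper's own proof'' to compare against. Your self-contained argument is therefore judged on its own merits, and it is essentially correct. The two cone inclusions, the scaling identity $\mathrm{vol}_d(\rho\widehat A)=\rho^d\kappa\lambda_0(A)$, and the $|\det M|$ bookkeeping are all sound, and the key estimate $C_{max}(M)/\beta \le |M\bar x| \le C_{max}(M)$ for $\bar x\in\Delta$ correctly exploits the $\beta$-balance hypothesis via $|M\bar x|=\sum_j C_j(M)\,x[j]$. This is in spirit the same geometric computation that underlies Kerckhoff's projective Jacobian bound (the change-of-variables factor $1/|M\bar x|^d$ for $p_\Delta\circ M$), just packaged as solid cones rather than as a Jacobian.

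One sentence in the strictness discussion is stated incorrectly, though the conclusion you draw from it is fine. You write that ``the cone $M\widehat\Delta$ reaches radius up to $C_{max}(M)$ along every direction of $M_\Delta$''; this is false, since along the direction $p_\Delta(M\bar x)$ the cone $M\widehat\Delta$ reaches precisely radius $|M\bar x|$, which ranges over $[\min_j C_j(M),\,C_{max}(M)]$ and attains $C_{max}(M)$ only at one vertex direction. The correct justification is the weaker observation that $|M\bar x|\ge \min_j C_j(M)\ge C_{max}(M)/\beta$ for all $\bar x\in\Delta$, with strict inequality $|M\bar x|>C_{max}(M)/\beta$ on a positive-measure subset of $\Delta$: if all columns are equal then $|M\bar x|$ is constantly $C_{max}(M)>C_{max}(M)/\beta$ since $\beta>1$, and otherwise $|M\bar x|$ is a non-constant affine function on $\Delta$ attaining $\min_j C_j(M)$ only on a lower-dimensional face. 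Either way inclusion (b) is proper on a set of positive $\mathrm{vol}_d$, yielding $A_2<B_2$ in your notation and hence the strict inequality (once, as you correctly note at the outset, $\lambda_0(W)>0$ is assumed so that the statement is not vacuous). With that one sentence repaired, the proof is complete.
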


\begin{lem} \label{matrices} There exists a constant $c'_{\mathcal{I}}$ such that for any  open set $U \subset \Delta$ and all large enough $k$ (depending on $U$) there exists a set
 $\mathcal{V}_k(U) \subset \mathcal{M}_{\beta}$
  with $|\mathcal{V}_k(U)|\geq c'_{\mathcal{I}}\lambda_0(U)2^{kd}$
   such that for any $M \neq N\in \mathcal{V}_k(U)$ the following holds
    \begin{enumerate}
\item $C_{\max}(M) \in [2^k,2^{k+1}]$
\item $M_{\Delta}\subset U$
\item $N_{\Delta} \cap M_{\Delta}=\emptyset.$
\end{enumerate}
\end{lem}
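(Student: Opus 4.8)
\textbf{Plan of proof for Lemma \ref{matrices}.}

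The statement asks for a large disjoint family of $\beta$-balanced matrices whose $\Delta$-cones sit inside a given open set $U$ and whose sizes are controlled at scale $2^k$. The natural strategy is to build $\mathcal{V}_k(U)$ out of the matrices $M(\bar z,q)$ produced by Lemma \ref{I condition}: those already are $\beta$-balanced with $C_{\max}\in[2^k,2^{k+1}]$, so conditions (1) is free, and the only work is to arrange (2) that their cones lie in $U$, and (3) that we can extract a disjoint subfamily that is still large.

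First I would pass from $U$ to a smaller piece. Fix a point $\bar v$ in $U$ together with a small ball $B(\bar v,\delta)\subset U$, normalized to $\Delta$, so that $\lambda_0$ of this piece is comparable to $\lambda_0(U)$ up to a constant (we may even reduce to the case where $U$ itself is a small ball, since a general open $U$ can be exhausted by countably many such balls and we only need a lower bound proportional to $\lambda_0(U)$; alternatively run the argument inside one ball of definite relative size). Apply Lemma \ref{I condition} with $\bar v$ and a fixed $n$ chosen large enough that $M(\bar v,n)_\Delta$ is contained in $B(\bar v,\delta)$ — this is possible precisely by Assumption (2)/(4) (contraction to the origin), which forces the diameters of the induction cones $M(\bar v,n)_\Delta$ to shrink to $0$ as $n\to\infty$. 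Lemma \ref{I condition} then gives, for all $k$ large depending on $\bar v$ and $n$, a subset of $M(\bar v,n)_\Delta$ of measure at least $c_{\mathcal I}\lambda_0(M(\bar v,n)_\Delta)$ consisting of points $\bar z$ admitting $q=q_{\bar z}$ with $M(\bar z,q)\in\mathcal M_\beta$ and $C_{\max}(M(\bar z,q))\in[2^k,2^{k+1}]$. Note each such cone $M(\bar z,q)_\Delta$ is a subcone of $M(\bar v,n)_\Delta\subset U$, so (2) holds automatically; this is the role of the preliminary shrinking.

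Next I would extract the disjoint family. The cones $\{M(\bar z,q_{\bar z})_\Delta\}$ coming from distinct $\bar z$ are nested or disjoint in the usual combinatorial sense of induction cones (two cylinders of the induction are either disjoint or one contains the other), so by a standard maximality / Vitali-type selection I can choose a subcollection $\mathcal V_k(U)$ of pairwise disjoint ones that still covers a set of measure at least $c_{\mathcal I}\lambda_0(M(\bar v,n)_\Delta)$ inside $M(\bar v,n)_\Delta$. It remains to bound $|\mathcal V_k(U)|$ from below. For each $M\in\mathcal V_k(U)$, since $M$ is $\beta$-balanced and $C_{\max}(M)\in[2^k,2^{k+1}]$, Proposition \ref{bal} (applied with $W=\Delta$) gives $\lambda_0(M_\Delta)\geq \beta^{-d}\lambda_0(\Delta)\cdot(\text{something})$; more directly, a $\beta$-balanced matrix with $C_{\max}\in[2^k,2^{k+1}]$ has $|\det M|$ and hence $\lambda_0(M_\Delta)$ comparable to $2^{-kd}$ up to a constant depending only on $\beta$ and $d$, i.e. $\lambda_0(M_\Delta)\leq c''_{\mathcal I}2^{-kd}$. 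Summing the disjoint cones,
$$
c_{\mathcal I}\lambda_0(M(\bar v,n)_\Delta)\;\leq\;\sum_{M\in\mathcal V_k(U)}\lambda_0(M_\Delta)\;\leq\;|\mathcal V_k(U)|\,c''_{\mathcal I}2^{-kd},
$$
whence $|\mathcal V_k(U)|\geq (c_{\mathcal I}/c''_{\mathcal I})\lambda_0(M(\bar v,n)_\Delta)2^{kd}\geq c'_{\mathcal I}\lambda_0(U)2^{kd}$ after absorbing the comparison $\lambda_0(M(\bar v,n)_\Delta)\gtrsim\lambda_0(U)$ into the constant.

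\textbf{Main obstacle.} The delicate point is the uniformity of the size estimate $\lambda_0(M_\Delta)\asymp 2^{-kd}$ for $\beta$-balanced $M$: one needs that $\beta$-balancedness genuinely controls the \emph{shape} of the simplex $M_\Delta$, not just a single column sum, so that its volume cannot be much smaller than $C_{\max}(M)^{-d}$ — this is exactly where Kerckhoff's balance machinery (the inequality behind Proposition \ref{bal}) does the real work, and care is needed that the constant depends only on $\beta$ and $d$ and not on $k$ or $U$. The second, more bookkeeping, obstacle is making the $k_0$ uniform: Lemma \ref{I condition} gives a $k_0$ depending on $\bar v$ and $n$, and one must fix $\bar v,n$ once and for all in terms of $U$ (via the contraction rate) before $k\to\infty$, which is why the statement only claims "all large enough $k$ depending on $U$."
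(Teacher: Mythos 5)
Your proposal follows essentially the same route as the paper's proof: apply Lemma \ref{I condition} to obtain a large-measure set of points $\bar z$ with $\beta$-balanced $M(\bar z,q)$ at scale $[2^k,2^{k+1}]$, extract a disjoint subcollection $\mathcal V_k(U)$ using the nested-or-disjoint structure of induction cylinders (maximality/Vitali), and divide the covered measure by a uniform upper bound $\lambda_0(M_\Delta)\leq c\,2^{-kd}$ to get the cardinality estimate. The paper's reduction is a touch cleaner than yours: it simply notes that every open subset of $\Delta$ is, up to a null set, a disjoint union of induction cylinders $M(\bar v,n)_\Delta$, so one may take $U$ itself to be such a cylinder; this avoids the ball-plus-contraction step you sketch (your alternative exhaustion ultimately achieves the same thing). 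One slip worth correcting: a matrix of induction has $\det M=1$ always, being a product of unipotent elementary matrices, so $|\det M|$ is certainly not comparable to $2^{-kd}$. The bound $\lambda_0(M_\Delta)\leq c\,2^{-kd}$ comes instead from the identity $\lambda_0(M_\Delta)=\lambda_0(\Delta)/\prod_j C_j(M)$ together with the lower bound $C_j(M)\geq 2^k\beta^{-1}$ for every $j$ furnished by $\beta$-balance; also note that Proposition \ref{bal} applied with $W=\Delta$ gives only the vacuous $1<\beta^d$, so it is the column-sum formula that carries this step.
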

\begin{proof} Because every open set in $\Delta$ is a disjoint union of $M_{\Delta}$ up to a null measure set it suffices to consider $U=M(\bar v,n)_{\Delta}$. Consider $M(\bar{z},q'_{\bar{z}})$ given by the proof of Lemma \ref{I condition}.  Let these matrices be ordered by $M \leq N$ if $M_{\Delta}\subset N_{\Delta}$. Consider the set of maximal elements in this set $\mathcal{V}_k(U)$. Lemma \ref{I condition} provides a set $\mathcal{V}_k(U) \subset \mathcal{M}_{\beta}$ where Conclusions 1 and 2 hold. The fact that steps of induction define partitions (so if $\bar{w} \in M(\bar{u}_1,n)_{\Delta}\cap M(\bar{u}_2,n)_{\Delta}$ then $M(\bar{u}_1,n)_{\Delta}=M(\bar{u}_2,n)_{\Delta}$)  and $\mathcal{V}_k(U)$ consist of maximal elements provides Conclusion 3. Each 
$$
C_i(M(\bar{z},q'_{\bar{z}})) \geq C_{\max}(M(\bar{z},q'_{\bar{z}})) \beta^{-1}\geq 2^k \beta^{-1}.
$$ 
So by Proposition \ref{bal} it follows that $\lambda_0(M(\bar{z},q'_{\bar{z}})_{\Delta})\leq 2^{-kd}\beta^{d-1}$. The previous computation and Lemma \ref{I condition} provide the cardinality estimate.
\end{proof}

\begin{lem}\label{size} If $M$ is $\beta$-balanced, $C_{max}(M) \in [2^i,2^{i+1}]$ and $\bar{v}\in \Delta_{1,2},$ then 
$M\bar{v} \in \Delta_{2^i\beta^{-1},2^{i+2}}$. 
\end{lem}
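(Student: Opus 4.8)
The statement to prove, Lemma \ref{size}, is a purely elementary estimate: if $M$ is $\beta$-balanced with $C_{\max}(M) \in [2^i, 2^{i+1}]$ and $\bar v \in \Delta_{1,2}$ (so $|\bar v| \in [1,2]$), then $M\bar v \in \Delta_{2^i\beta^{-1}, 2^{i+2}}$, i.e.\ $|M\bar v| \in [2^i\beta^{-1}, 2^{i+2}]$. Here is how I would organize the argument.

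\textbf{Setup.} The plan is to compute $|M\bar v| = \sum_i (M\bar v)[i]$ directly in terms of the column sums $C_j(M)$. Writing $\bar v = (v[1], \ldots, v[d])$, one has
$$
|M\bar v| = \sum_{i=1}^d \sum_{j=1}^d M[i,j] v[j] = \sum_{j=1}^d v[j] \sum_{i=1}^d M[i,j] = \sum_{j=1}^d v[j]\, C_j(M).
$$
So $|M\bar v|$ is just a nonnegative-coefficient combination of the column sums, weighted by the coordinates of $\bar v$, and the whole lemma reduces to bounding this sum above and below.

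\textbf{Upper bound.} Since each $C_j(M) \le C_{\max}(M) \le 2^{i+1}$ and $\sum_j v[j] = |\bar v| \le 2$, we get $|M\bar v| \le 2^{i+1} \sum_j v[j] \le 2^{i+1} \cdot 2 = 2^{i+2}$, which is the claimed upper bound. This step is immediate and uses only $C_j(M) \le C_{\max}(M)$ and $|\bar v| \le 2$; it does not even need $\beta$-balancedness.

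\textbf{Lower bound.} This is where $\beta$-balancedness enters. By definition of $\mathcal{M}_\beta$, every column satisfies $\beta\, C_j(M) \ge C_{\max}(M) \ge 2^i$, hence $C_j(M) \ge 2^i \beta^{-1}$ for all $j$. Therefore $|M\bar v| = \sum_j v[j]\, C_j(M) \ge 2^i\beta^{-1} \sum_j v[j] = 2^i \beta^{-1} |\bar v| \ge 2^i\beta^{-1}$, using $|\bar v| \ge 1$. Combining the two bounds gives $|M\bar v| \in [2^i\beta^{-1}, 2^{i+2}]$, i.e.\ $M\bar v \in \Delta_{2^i\beta^{-1}, 2^{i+2}}$, as desired. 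There is no real obstacle here — the only thing to be slightly careful about is keeping the lower bound on $|\bar v|$ (namely $1$) separate from the lower bound on the column sums, and the only ``content'' is the observation that $|M\bar v|$ is a convex-type combination of column sums, after which both inequalities fall out of the definitions of $\Delta_{1,2}$, $C_{\max}$, and $\beta$-balanced.
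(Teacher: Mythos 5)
Your proof is correct and follows exactly the same approach as the paper's: expand $|M\bar v| = \sum_j C_j(M)\,v[j]$ and then use the bounds $2^i\beta^{-1} \le C_j(M) \le 2^{i+1}$ together with $|\bar v|\in[1,2]$. You have simply spelled out the one-line argument in the paper more explicitly.
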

\begin{proof} This follows from the fact that $2^i \beta^{-1} \leq C_{j}(M) \leq 2^{i+1}$ for all $j$
 and $|M\bar{v}|=\underset{j=1}{\overset{d}{\sum}} C_j(M) \bar{v}[j].$
\end{proof}

\begin{proof}[Proof of Proposition \ref{hits well}] 
Recall that $S$ is a positive measure subset of $\Delta_{1,2}$. 
Consider $\mathcal{V}_k(U)$ given by Lemma \ref{matrices} and $H=\underset{M \in \mathcal{V}_k(U)}{\cup}MS$. 
By Lemma \ref{size} and Conclusion 2 of Lemma \ref{matrices} we have $H \subset p_{\Delta}^{-1}(U)\cap \Delta_{2^k\beta^{-1},2^{k+1}}$.
 By the first remark in this section $H \subset \underset{n=1}{\overset{\infty}{\cup}}\mathcal{I}^{-n}(S)$. 
 The union that defines $H$ is a disjoint union by Conclusion 3 of Lemma \ref{matrices} so 
 $\lambda(H) =\underset{M \in \mathcal{V}_k(U)}{\sum} \lambda(MS)$.
  Because our matrices have determinant 1 and so preserve $\lambda$ we have $\lambda(H)=\lambda(S)|\mathcal{V}_k(U)|$.
   By the cardinality estimate in Lemma \ref{matrices} this is at least $\lambda(S)c'_{\mathcal{I}}\lambda_0(U)2^{kd}$ and the proposition follows. 
\end{proof}

\begin{proof}[Proof of Corollary \ref{hits well perm}]
Given $U\subset \Delta$ open, by  Proposition \ref{hits well} there exists a permutation $\tau$ such that 
 $$\lambda(\underset{i=1}{\overset{\infty}{\cup}}\mathcal{I}^{-i}S \cap \Delta_{\beta^{-1}2^k,2^{k+1}} \cap p^{-1}_{\Delta}(U)\times\{\tau\})>\frac{1}{|\mathcal{P}|} C_{\mathcal{I}}\lambda_0(U)\lambda(\Delta_{\beta^{-1}2^k,2^{k+1}})\frac{\lambda(S)}{\lambda(\Delta_{1,2})}.$$

Let $\sigma \in \mathcal{P}$, then there exists $n$ and a matrix of induction $M((\bar{x},\sigma),n)$ such that $\pi_2(I^{n}(\bar{x}_{\tau},\sigma))=\tau$,
  and $(M(\bar{x},\sigma),n)_{\Delta} \subset U$. 
Considering the image of $$\underset{i=1}{\overset{\infty}{\cup}}\mathcal{I}^{-i}S \cap \Delta_{\beta^{-1}2^k,2^{k+1}} \cap p^{-1}_{\Delta}(U)\times\{\tau\}$$ under this matrix, the corollary follows with $q=2^n$.
\end{proof}
Corollary \ref{hits real well} follows similarly by using the above argument. Recall $S=S'\times \mathcal{P}$.
\begin{proof}[Proof of Corollary \ref{hits real well}] For each large $k$ by Lemma \ref{I condition}  
we have that 
$$\lambda(\underset{i=1}{\overset{\infty}{\cup}}\mathcal{I}^{-i}(S' \times \{\sigma'\}) \cap \Delta_{2^k\beta^{-1},2^{k+1}} \times \mathcal{P})\geq c_{\mathcal{I}}2^{kd}\lambda(S).$$ 
So there exists $\tau$ such that 
$$\lambda(\underset{i=1}{\overset{\infty}{\cup}}\mathcal{I}^{-i}(S'\times \{\sigma'\}) \cap \Delta_{2^k\beta^{-1},2^{k+1}} \times \{\tau\}) \geq
\frac{1}{|\mathcal{P}|} c_{\mathcal{I}}2^{kd}\lambda(S).$$ 
 For each $\sigma, \tau\in \mathcal{P}$ there exist $t, n_{\tau}$, and matrices of induction 
 $M_1$,...,$M_{t}$ such that $(M_i)_{\Delta}\subset U$, each 
 $M_i=M((\bar{x}_i,\sigma),n_i))$ where $n_i<n_{\tau}$, $\pi_2(I^{n_i}(\bar{x}_i,n_i))=\tau$,
   and $\lambda_0(\cup (M_i)_{\Delta})\geq \frac 1 2 \lambda_0(U)$.  
   Applying these matrices to 
   $\underset{i=1}{\overset{\infty}{\cup}}\mathcal{I}^{-i}(S'\times \{\sigma'\}) \cap \Delta_{2^k\beta^{-1},2^{k+1}} \times \{\tau\}$
   and letting $q'=2^{\max \, n_{\tau}}$ the corollary follows.
\end{proof}

\subsection{Proof of Proposition \ref{compare}}

Here is where Assumption 1 is used. We assume that there exists
$\tilde{N}$ which is a loop permutation matrix.

\begin{lem}\label{start} For any $\epsilon$ and $k$ there exists
$R_{\epsilon,k}:=R$ such that for all $(\bar y,\sigma) \in \Delta\times \mathcal{P}$ except a
set of $\lambda_0$-measure $\epsilon$ we have $M((\bar y,\sigma),1)M(I(\bar y,\sigma),1)\cdots M(I^R(\bar y,\sigma),1)$ contains a block of the form
$\tilde{N}^k$.
\end{lem}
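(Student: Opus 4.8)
\textbf{Proof plan for Lemma \ref{start}.}

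The plan is to argue that the loop permutation forces the symbolic dynamics of the normalized map $I$ to visit, within a bounded window, the loop node $\pi_0$ (the permutation with $\pi_0 \in \{\pi_0',\pi_0''\}$ giving the matrix $\tilde N$) and, once there, to run the loop at least $k$ times consecutively; and then to show this happens for all but an $\epsilon$-fraction of points by a uniform recurrence/expansion estimate derived from Assumption 3 and the connectedness of $\mathcal G$ (Assumption 4). First I would fix the loop node $\pi_0$ and its loop edge, say the one with $\pi_0'=\pi_0$, whose associated elementary matrix is $\tilde N$ (up to relabeling coordinates, $\tilde N$ is a single off-diagonal $1$). Running this loop $k$ times in a row means the orbit stays at $\pi_0$ and at each step lies in the half-cone selecting the $\pi_0'$ branch; the product of the $k$ matrices picked up is exactly $\tilde N^k$, which is the block we want embedded in $M((\bar y,\sigma),1)\cdots M(I^R(\bar y,\sigma),1)$.

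The key steps, in order, would be: (i) Observe that the set $E_{\pi_0,k}\subset \Delta\times\{\pi_0\}$ of points whose first $k$ iterates under $I$ all stay at $\pi_0$ via the loop branch is a union of cylinders $M_\Delta\times\{\pi_0\}$ of positive $\lambda_0$-measure; indeed it contains $(\tilde N^k \mathbb R_+^d\cap\Delta)\times\{\pi_0\}$ which has positive measure since $\tilde N^k$ is a fixed invertible matrix. Call $\delta_0 = \lambda_0(E_{\pi_0,k})>0$; note it depends on $k$ but not on the starting point. (ii) By ergodicity of $I$ (established in Section~3 for any Rauzy induction type algorithm, i.e.\ the statement following (\ref{norm})), a.e.\ orbit of $I$ enters $E_{\pi_0,k}$ infinitely often; by the maximal ergodic theorem / Egorov, for any $\epsilon>0$ there is $R=R_{\epsilon,k}$ such that the set of $(\bar y,\sigma)$ whose $I$-orbit does \emph{not} hit $E_{\pi_0,k}$ within the first $R$ steps has $\lambda_0$-measure less than $\epsilon$. (iii) For $(\bar y,\sigma)$ outside this bad set, pick $r\le R$ with $I^r(\bar y,\sigma)\in E_{\pi_0,k}$; then the matrices $M(I^{r}(\bar y,\sigma),1),\dots,M(I^{r+k-1}(\bar y,\sigma),1)$ are each the loop matrix, so their product is $\tilde N^k$, and this product is a consecutive sub-block of $M((\bar y,\sigma),1)\cdots M(I^R(\bar y,\sigma),1)$ as long as $r+k\le R$; enlarging $R$ to $R+k$ if necessary handles the boundary case. (iv) Conclude by taking the final $R$ to be this (possibly enlarged) value.

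The main obstacle I anticipate is step (ii): turning "a.e.\ orbit eventually hits $E_{\pi_0,k}$" into a \emph{uniform} bound "all but measure $\epsilon$ hit it within $R$ steps," where $R$ depends only on $\epsilon$ and $k$. This is where one must be careful that the ergodicity used is of the \emph{finite-measure} normalized map $I$ on $\Delta\times\mathcal P$ rather than the infinite-measure $\mathcal I$ — so the Birkhoff/maximal ergodic theorem applies and the sets $\{(\bar y,\sigma): I^j(\bar y,\sigma)\notin E_{\pi_0,k}\ \forall j\le R\}$ shrink to a null set as $R\to\infty$, whence their measures drop below $\epsilon$ for $R$ large. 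An alternative, more self-contained route avoiding a direct appeal to ergodicity of $I$ would be to use Assumption~3 together with connectedness of $\mathcal G$: from any cylinder $M'_\Delta$ a definite proportion of points reach a controlled-balanced matrix, and from the balanced regime one can prepend a bounded path in the graph from its terminal node to $\pi_0$ and then run the loop $k$ times, giving a positive proportion (depending only on $\mathcal I$ and $k$) of each cylinder that realizes the desired block within a bounded number of steps; iterating this over a partition into cylinders and using a Borel--Cantelli / quasi-independence argument (as in the proof of Lemma~\ref{I condition}) yields the full-measure-minus-$\epsilon$ statement with an explicit $R_{\epsilon,k}$. I would present whichever of these the surrounding text has already set up most cleanly; given that Section~4 leans on Kerckhoff-style combinatorial estimates rather than soft ergodic theory, the second route is likely the intended one.
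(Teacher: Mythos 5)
Your primary argument — define the cylinder of points running the loop $k$ times at $\pi_0$, note it has positive $\lambda_0$-measure since it contains $(\tilde N^k)_\Delta\times\{\pi_0\}$, then invoke $\lambda_0$-ergodicity of the finite-measure normalized map $I$ so that the bad set $\{(\bar y,\sigma): I^j(\bar y,\sigma)\notin E_{\pi_0,k}\ \forall j\le R\}$ has measure below $\epsilon$ for $R$ large — is exactly the paper's proof, which consists of one sentence stating these two facts. Your closing guess that the paper would instead use the Kerckhoff-style quantitative route is wrong, but your first, soft-ergodicity argument is the intended and correct one.
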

\begin{proof} This follows because
$\lambda_0((\tilde{N}^m)_{\Delta})>0$ and the action of $I$ on $\Delta\times \mathcal{P}$ is $\lambda_0$-ergodic where $\lambda_0$ is a finite measure.
\end{proof}

Let $E_{i,j}$ denote the matrix with a $1$ in the $i^{th}$ row $j^{th}$ column and zeros elsewhere. Given a matrix $A$ let $A[i,j]$ denote the entry in the $i^{th}$ row $j^{th}$ column. Let $\tilde{N}$ be the matrix coming from taking the loop permutation to itself.
Consider $A\tilde{N}^kB=A(Id+kE_{i,j})B $ for some $i \neq j$ by our
assumption on $\tilde{N}$. This in turn is
$\underset{t=1}{\overset{d}{\sum}}\underset{s=1}{\overset{d}{\sum}}A[t,i]B[j,s]kE_{t,s}+AB$.

The idea of the proof of Proposition \ref{compare}  is that a fixed proportion
of the matrices $M\in \mathcal{V}_k(U)$ have the form $A\tilde{N}^nB$ where 
$$|A\tilde{N}^{n+r}B\bar{u}|-|A\tilde{N}^nB\bar{u}|\in [ra\epsilon2^k,r\epsilon 2^k]$$
 for any $\bar{u}\in \Delta$ and the $a$ depends only on $\mathcal{I}$. So by varying $r$
  we can move the measure in $\underset{i=1}{\overset{\infty}{\cup}}\mathcal{I}^{-i}(S)$ from
  $\Delta_{2^k(1+l\epsilon),2^{k}(1+(l+1)\epsilon)}$ to $\Delta_{2^k(1+r\epsilon),2^k(1+(r+1)\epsilon)}$.

\begin{proof}[Proof of Proposition \ref{compare}]

Observe that if $A\in \mathcal{M}_{\zeta}$ and 
$\frac{B[r,s]}{B[g,h]}\leq \kappa$ then for any $\bar{u}, \bar{v} \in \Delta$ we have
 $\frac{|AE_{i,j}B\bar{u}|}{|AE_{i,j}B\bar{v}|}\leq \kappa\zeta$ and $\frac{|AB\bar{u}|}{|AE_{i,j}B\bar{u}|}\leq d\kappa.$
 It follows that if $A$ and $B$ have these properties and $A\tilde{N}^mB\in \mathcal{V}_k(U)\subset \mathcal{M}_{\beta}$, then $|AE_{i,j}B \bar{x}|\in [\frac{2^k}{\beta \kappa d t},\frac{2^{k+1}}{t}] $, for every  $ \bar{x} \in \Delta$.

We claim that there exist $\zeta$ and $\kappa$ such that given $t$ for any large enough $k$ the proportion of matrices in $\mathcal{V}_k(U)$ with this form is at least a fixed constant independent of $t$. The condition on $B$ depends on $B_{\Delta}$ being away from the boundary of the simplex and $B$ being balanced. 
Given large enough $\zeta$, there are matrices $A'$, $B'$ such that $\lambda_0\left(\left(A'\tilde{N}^mB'\right)_{\Delta}\right)>0$, $A=\tilde{A}A'$ is $\zeta$-balanced and that $B=B' \tilde{B}$ has that $B_{\Delta}$ is away from the boundary of the simplex for any matrices of induction $\tilde{A}, \tilde{B}$.  Analogously to Lemma \ref{start} we have $A'\tilde{N}^mB'$ appears in most large enough matrices of induction. By Lemma \ref{I condition} at least $c_{\mathcal{I}}$ of these matrices have that $B$ is $\beta$-balanced.

Let 
\begin{multline*}
G_k(U)=\{A\tilde{N}^mB \in V_k(U): |AE_{i,j}B\bar{x}|\in [\epsilon \frac{2^{k-2}}{d^2 \zeta\beta\kappa},
 \epsilon \frac{2^{k-1}}{ \beta}] \forall \bar{x} \in \Delta\}.
 \end{multline*}
 It follows that there exist $\zeta,\kappa,\omega$ independent of $\epsilon$ such that for all large enough $k$ (the largeness of $k$ does depend on $\epsilon$)
$|G_k(U)| \geq \omega |\mathcal{V}_k(U)|$.
Let 
$$
G_k(U)[s]=\{A\tilde{N}^{m+s}B: A\tilde{N}^mB \in G_k(U)\}.
$$
For each $\bar{u} \in \Delta_{1,2}$ and $A\tilde{N}^mB\in G_k(U)$ there exists $s$ so that $A\tilde{N}^{m+s}B\bar{u} \in \Delta_{2^k(1+(l+1)\epsilon),2^k(1+(l+2)\epsilon)}$.
 Because our matrices all preserve $\lambda$, and for each fixed $s$ we have that the matrices in  $G_k(U)[s]$ have disjoint images (by Conclusion 3 of Lemma \ref{matrices}) we have 
 \begin{multline}
 \underset{s=1}{\overset{2d^2\beta\zeta\kappa}{\sum}}\lambda(\underset{M \in G_k(U)[s]}{\cup} MS \cap \Delta_{2^k(1+(l+1)\epsilon,2^k(1+(l+2)\epsilon)})\geq\\
  \lambda(\underset{M \in G_k(U)}{\cup}MS \cap \Delta_{2^k(1+l\epsilon),2^k(1+(l+1)\epsilon)}).
 \end{multline}
 Therefore there exists $1\leq s\leq d^2 \beta \zeta \kappa$ such that  if 
  \begin{multline}
 \lambda(\underset{M \in G_k(U)}{\cup}MS \cap \Delta_{2^k(1+l\epsilon),2^k(1+(l+1)\epsilon)}) \geq \\ 
 \hat{C} \lambda((p_{\Delta}^{-1}(U)\cap \Delta_{2^k(1+l\epsilon),
2^{k}(1+(l+1)\epsilon)})\times \{\sigma\}  \cap \underset{i=1}{\overset{\infty}{\cup}}\mathcal{I}^{-i}S)),
 \end{multline}
then
 \begin{multline}
 \lambda(
 \underset{M \in G_k(U)[s]}{\cup} MS 
 \cap (p_{\Delta}^{-1}(U) \cap \Delta_{2^k(1+(l+1)\epsilon),
2^{k}(1+(l+2)\epsilon)})\times \{\sigma\}  )\geq \\ \frac{\hat{C}}{2d^2\beta\zeta\kappa}\lambda((p_{\Delta}^{-1}(U) \cap \Delta_{2^k(1+l\epsilon),
2^{k}(1+(l+1)\epsilon)})\times \{\sigma\}  \cap \underset{i=1}{\overset{\infty}{\cup}}\mathcal{I}^{-i}S)).
\end{multline}
The proposition follows similarly in the general case.
\end{proof}

\bigskip

\subsection{Proof to Theorem \ref{main}}

We first prove a technical lemma whose proof is similar   to the proof of Proposition \ref{hits well} with some additional complications. 

\begin{lem}\label{run condition} Let $W$ be a  set that is $\mathcal{I}$ invariant with
$\lambda(W)>0$ and  let $\epsilon>0$. Then there exist $r_{\epsilon}>0$  
and  infinitely many arbitrarily small $a>0$
(depending on $W$), $\bar x_a\in \Delta_{a,2a}$ and $\sigma_a\in \mathcal{P}$ such that 
$$
\lambda(W \cap (B(\bar{x}_a,r_{\epsilon}a)  \times\{\sigma_a\}))\geq (1-\epsilon) \lambda(B(\bar{x}_a,r_{\epsilon}a)).
$$ 
\end{lem}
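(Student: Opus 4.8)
The goal is a Lebesgue density statement: an $\mathcal{I}$-invariant set $W$ of positive measure has near-full-density balls $B(\bar x_a, r_\epsilon a)$ sitting inside thin annular shells $\Delta_{a,2a}$, for infinitely many arbitrarily small scales $a$. The plan is to deduce this from the Lebesgue density theorem together with the self-similar structure of the induction, using the matrices of induction to transport a density point of $W$ down to small scales. First I would pick a Lebesgue density point $\bar w$ of $W$ in $\mathbb{R}_+^d \times \mathcal{P}$, say with $\pi_2$-coordinate $\sigma$, so that on small balls $B(\bar w, \delta)\times\{\sigma\}$ the set $W$ has density exceeding $1-\epsilon'$ for a suitable $\epsilon'\ll\epsilon$. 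Normalizing, I get a small open set $U\subset\Delta$ around $p_\Delta(\bar w)$ on which $W$, viewed through $p_\Delta^{-1}$, has controlled density.

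\textbf{Transporting to small scales.} The key mechanism is the first remark of Section 4.1: $\bigcup_n \mathcal{I}^{-n}(B\times\{\sigma\})$ is the union of $M((\bar y,\sigma'),n)\,B\times\{\sigma'\}$ over all inductions landing at $\sigma$. Since $W$ is $\mathcal{I}$-invariant, $\mathcal{I}^{-n}(W)\subset W$, so applying a matrix of induction $M=M((\bar y,\sigma'),n)$ with $M_\Delta\subset U$ sends the high-density copy of $W$ inside $p_\Delta^{-1}(U)$ to a high-density copy of $W$ inside $M_\Delta$ — crucially, because $M$ has determinant $1$ it preserves $\lambda$, and because $M$ is linear it preserves density ratios of sets with respect to the pushed-forward reference set $M(p_\Delta^{-1}(U))$. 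To land in a thin shell $\Delta_{a,2a}$ I would use the $\beta$-balanced matrices supplied by Lemma \ref{I condition} and Lemma \ref{matrices}: a $\beta$-balanced $M$ with $C_{max}(M)\in[2^k,2^{k+1}]$ sends $\Delta$ into $\Delta_{2^k\beta^{-1},2^{k+1}}$ by Lemma \ref{size}, and a ball around $p_\Delta(\bar w)$ of the correct radius goes to a ball of radius comparable to $2^{-k}$ — these are the $a\sim 2^{-k}$ and $\bar x_a = M\bar w$ (suitably rescaled into $\Delta_{a,2a}$) that the lemma asks for. Since Lemma \ref{I condition} produces such balanced matrices at every large scale $k$, one gets infinitely many arbitrarily small $a$.

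\textbf{Density is preserved.} The point requiring care is that a linear map distorts Euclidean balls into ellipsoids, so ``density on a ball'' is not literally preserved by $M$; one only directly controls density with respect to images of balls. The fix is the $\beta$-balancedness and the ``away from the boundary of the simplex'' condition already used in the proof of Proposition \ref{compare}: restricting to $M$ whose $M_\Delta$ is interior and whose column sums are comparable (ratio bounded by $\beta$) forces the distortion of $M$ on the relevant region to be bounded by a constant depending only on $\mathcal{I}$, so an inscribed genuine Euclidean ball $B(\bar x_a, r_\epsilon a)$ inside $M(B(\bar w,\delta))$ still captures density $\geq 1-\epsilon$ of $W$, after shrinking $\delta$ (equivalently choosing $\epsilon'$) and choosing $r_\epsilon$ in terms of the distortion constant and $\epsilon$. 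One must also check that enough of these balanced matrices have $M_\Delta\subset U$ and are boundary-interior simultaneously: this is exactly the combination of Lemma \ref{matrices} (for the $U$ containment and cardinality, hence nonemptiness at every large $k$) with the boundary-avoidance argument from Proposition \ref{compare}'s proof.

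\textbf{Main obstacle.} I expect the genuine difficulty to be precisely the distortion control: turning the abstract ``density point'' into a quantitative near-full-density \emph{Euclidean} ball at the new scale, uniformly over the infinitely many matrices used, with a single radius factor $r_\epsilon$ depending only on $\epsilon$ (and $\mathcal{I}$) and not on the scale. This is what the paper flags as ``similar to the proof of Proposition \ref{hits well} with some additional complications'' — the complications being bookkeeping of the linear distortion and ensuring the balanced, interior matrices are abundant at every scale. Everything else (density point existence, $\mathcal{I}$-invariance giving $\mathcal{I}^{-n}(W)\subset W$, determinant-$1$ volume preservation, Lemma \ref{size} for the shell) is routine.
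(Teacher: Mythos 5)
Your plan shares several ingredients with the paper's proof — a Lebesgue density point of $W$, the matrix collections $\mathcal{V}_k(U)$ from Lemma~\ref{matrices}, $\mathcal{I}$-invariance plus determinant-one to preserve measure — but it is built around a step that does not work: pushing a density ball $B(\bar w,\delta)$ forward through a balanced matrix $M$ and \emph{inscribing} a Euclidean ball of radius $r_\epsilon a$ inside the image $M(B(\bar w,\delta))$.

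The problem is that $\beta$-balancedness of $M$ controls ratios of \emph{column sums}, hence (via Proposition~\ref{bal}) Jacobian distortion on the simplex; it does not bound the condition number of $M$ as a linear map. Indeed, with $\det M=1$ and $\|M\|_{\mathrm{op}}\sim C_{\max}(M)\sim 2^k$, the smallest singular value of $M$ is at most of order $2^{-k/(d-1)}$, so $M(B(\bar w,\delta))$ is an ellipsoid whose shortest semi-axis is of order $2^{-k/(d-1)}\delta$ while the target scale is $a\sim 2^k$. The largest Euclidean ball inscribed in this ellipsoid has radius a vanishingly small multiple of $a$ as $k\to\infty$, so there is no fixed $r_\epsilon$ achievable this way. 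The ``away from the boundary'' and balancedness conditions invoked from the proof of Proposition~\ref{compare} control measure ratios and column-sum ratios, not the aspect ratio of $M(B)$, and one can construct balanced matrices whose image simplices $M_\Delta$ are arbitrarily thin slivers.

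The paper avoids this entirely by working in the opposite direction. It never tries to make the image of a ball round. Instead it chooses Euclidean balls $B(\bar w_{M,j}, \tfrac{r}{2\|M\|_{\mathrm{op}}})$ \emph{already at the small scale}, and only uses $\|M\|_{\mathrm{op}}$ to show the forward images $MB(\bar w_{M,j},\cdot)$ land \emph{inside} $B(\bar x, r)$ — a one-sided expansion bound which needs nothing about $M^{-1}$. Then it establishes that the images are pairwise disjoint (from separation of the $\bar z_j$ and Conclusion~3 of Lemma~\ref{matrices}), counts them, and shows their total $\lambda$-mass is a fixed proportion $\tfrac{2}{T}$ of $\lambda(B(\bar x,r))$. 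With $\lambda(B(\bar x,r)\cap W^c)<\tfrac{\epsilon}{T}\lambda(B(\bar x,r))$, the pigeonhole principle produces one image, hence by volume-preservation and $\mathcal{I}$-invariance one genuine Euclidean ball at the small scale, with $W$-density $\geq 1-\epsilon$. This packing-and-pigeonhole structure is the idea your proposal is missing; the distortion control you flagged as ``the main obstacle'' is not something the paper surmounts but something it designs around.
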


\begin{proof}
This follows from Lemma \ref{matrices} and the Lebesgue Density Theorem.  Let $\bar{x}$ be a density point of $W$, so that 
$\lambda (B(\bar{x},r )\cap W)> (1-\frac{\epsilon}{T}) \lambda(B(\bar{x},r))$ where $T$ depends only on $\mathcal{I}$, and the dimension $d$ and is specified in the last line of the proof. 
For simplicity we write $\bar{x}$ instead of $(\bar{x},\sigma) $.
Let $U=p_{\Delta}(B(\bar{x},\frac{r}2))$.

Let $\mathcal{V}_k(U)$ be the set of matrices guaranteed by Lemma \ref{matrices}. 
Let $\{\bar{z}_j\}_{j=1}^t$ be a maximal collection of $\frac{2r}{|\bar{x}|}$ separated points in $\Delta$. 
 For each $M \in \mathcal{V}_k(U)$ and $1\leq j \leq t$, let 
$\bar{w}_{M,j}= \frac{| \bar{x}|}{|M \bar{z}_j|}\bar{z}_j$, thus $M\bar{w}_{M,j}\in B(\bar{x},\frac r 2).$ Let $||M||_{op}$ denote the $L^1$ operator norm of $M$. This is $|C_{\max}(M)|$.
 We have 
$$MB(\bar{w}_{M,j},\frac{r}{2||M||_{op}})\subset B(\bar{x},\frac{r}2 +||M||_{op}\frac{r}{2||M||_{op}})=B(\bar{x},r).$$
 We will show that one of the $B(\bar{w}_{M,j},\frac{r}{2||M||_{op}})$ satisfies the conclusion of Lemma \ref{run condition} by showing that 
\begin{equation}\label{measure estimate}
\lambda(\underset{M \in \mathcal{V}_k(U)}{\cup}\underset{j=1}{\overset{t}{\cup}} MB(\bar{w}_{M,j},\frac{r}{2||M||_{op}}))>\frac 2 T \lambda(B(\bar{x},r)).
\end{equation} 
This is enough because then we will have 
$$
\lambda(\underset{M \in \mathcal{V}_k(U)}{\cup}\underset{j=1}{\overset{t}{\cup}} MB(\bar{w}_{M,j},\frac{r}{2||M||_{op}}))\geq 2\epsilon^{-1} \lambda(B(\bar x,r) \cap W^c)).
$$
The pigeonhole principle will then imply that there exists $MB(\bar{w}_{M,j},\frac{r}{2||M||_{op}})$ with 
$$
\lambda(MB(\bar{w}_{M,j},\frac{r}{2||M||_{op}})\cap W)\geq (1-\epsilon)\lambda(B(\bar{w}_{M,j},\frac{r}{2||M||_{op}})).
$$
 Because $W$ is $\mathcal{I}$ invariant and $M$ preserves volume it will follow that 
 $\lambda(B(\bar{w}_{M,j},\frac{r}{2||M||_{op}})\cap W)\geq (1-\epsilon)\lambda(B(\bar{w}_{M,j},\frac{r}{2||M||_{op}}))$ completing the lemma.
 
 To prove that we first show that $\underset{M \in \mathcal{V}_k(U)}{\cup}\underset{j=1}{\overset{t}{\cup}} MB(\bar{w}_{M,j},\frac{r}{2||M||_{op}})$ is a disjoint union.
  If $M\neq N$ then
  $MB(\bar{w}_{M,i},\frac{r}{2||M||_{op}}) \cap NB(\bar{w}_{N,j},\frac{r}{2||N||_{op}}) =\emptyset$ by Conclusion 3 of Lemma \ref{matrices}.
   Let $M=N$, $i \neq j$ and $|\bar{w}_{M,i}|\leq |\bar{w}_{M,j}|$.
    Notice that $\bar{w}_{M,i}$ and $\bar{w}_{M,j}\frac{|\bar{w}_{M,i}|}{|\bar{w}_{M,j}|}$
     are $|\bar{w}_{M,i}|\frac{2r}{|\bar{x}|}\geq \frac{r}{||M||_{op}}$ 
     separated. 
 So
   \begin{multline}\label{disjoint}
  p_{\Delta}(B(\bar{w}_{M,i},\frac{r}{2||M||_{op}})) \cap p_{\Delta}(B(\bar{w}_{M,j},\frac{r}{2||M||_{op}}))\subset \\   p_{\Delta}(B(\bar{w}_{M,i},\frac{r}{2||M||_{op}})) \cap p_{\Delta}(B(\bar{w}_{M,j}\frac{|\bar{w}_{M,i}|}{|\bar{w}_{M,j}|},\frac{r}{2||M||_{op}})) =\emptyset, 
   \end{multline}
as the points $\bar{z}_i, \bar{z}_j$ are separated.  Because $M$ is an injective map, by (\ref{disjoint}) 
 $$
 MB(\bar{w}_{M,i},\frac{r}{2||M||_{op}}) \cap MB(\bar{w}_{M,j},\frac{r}{2||M||_{op}}) =\emptyset
 $$ 
  and so the union is disjoint.
  
  Because $M$ preserves $\lambda$ it follows that
  $$
  \lambda(\underset{M \in \mathcal{V}_k(U)}{\cup}\underset{j=1}{\overset{t}{\cup}} MB(\bar{w}_{M,j},\frac{r}{2||M||_{op}}))=
  \underset{M \in \mathcal{V}_k(U)}{\sum} t \lambda(B(\bar{w}_{M,1},\frac{r}{2||M||_{op}}) ).
  $$ 
  By the cardinality estimate in Lemma \ref{matrices} this is at least 
  $$c'_{\mathcal{I}}\lambda_0(U)2^{kd}t\lambda(B(\bar{w}_{M,j},\frac{r}{2||M||_{op}}))=c_{\mathcal{I}}\lambda_0(U)2^{kd}te_1(\frac{r}{2\cdot2^{k+1}})^d$$ where $e_1$ depends only on the dimension. By the definition of $U$ and $t$, $\lambda_0(U)$ is at least $e_2(r/|\bar{x}|)^{d-1}$ and $t$ is at least  $e_3(r/|\bar{x}|)^{-(d-1)}$ where $e_2$ and $e_3$ depend only on the dimension. This is because $t$ is the cardinality of a maximal set of $c\frac{r}{|\bar {x}|}$ separated points in a simplex of dimension $d-1$ and $\lambda_0(U)$ is the measure of the projection of a sphere of radius $\frac 1 2 \frac{r}{|\bar{x}|}$ onto a $(d-1)$-dimensional hyperplane. So 
  $$\lambda(\underset{M \in \mathcal{V}_k(U)}{\cup}\underset{j=1}{\overset{t}{\cup}} MB(\bar{w}_{M,j},\frac{r}{2||M||_{op}}))>c'_{\mathcal{I}}e_1e_2e_32^{-2d}r^d>c'_{\mathcal{I}}e_1e_2e_32^{-2d}e_4\lambda(B(\bar{x},r))$$ where $e_4$ depends only on the dimension, establishing Equation (\ref{measure estimate}).  The lemma is completed and $a\in [|\bar{x}|2^{-k-1},|\bar{x}|\beta2^{-k}]$ and $r_{\epsilon}=\frac{r}{4\beta}$. $T$ can be chosen to be $(c'_{\mathcal{I}}e_1e_2e_3e_4)^{-1}2^{2d}(4\beta)^d$.
  \end{proof}

\begin{proof}[Proof of Theorem \ref{main}] Assume that $\mathcal{I}$ is not ergodic with respect to the measure $\lambda$. Therefore there
exists a measurable set $V \subset \mathbb{R}^d_+ \times \mathcal{P}$ such that
 \begin{enumerate}
 \item $\lambda(V)>0$
 \item $\lambda(V^c)>0$
 \item $\mathcal{I}(V) \subset V$
 \item $\mathcal{I}^{-1}(V) \subset V$.
\end{enumerate}
We will show that for $\lambda$-almost every $\bar z \times \{\sigma\}$ we have
 $$
 \underset{r \to 0^+}{\liminf} \frac{\lambda (B(\bar z,r) \times \{\sigma\}\cap V)}{\lambda(B(\bar z,r))}>0.
$$ 
 By Lemma \ref{run condition} there exists $r>0$ such that for arbitrarily small $a$ we have $B(\bar{x}_a,ar)\times \{\sigma_a\}$ with 
 $\lambda\left(V \cap B(\bar{x}_a,ra)\times \{\sigma_a\}\right)>(1-\epsilon)\lambda(B(\bar{x}_a,ra))$.
 By assumption $\lambda(V^c)>0$ and so $V^c$ has a Lebesgue density point $(\bar{z},\tau) \in \Delta_{b,2b}\times \mathcal{P}$, where 
 $b= \frac 3 2 |\bar {z}|$.

 By the $\lambda_0$-ergodicity of $I$ we may assume that there exists $n$ such that
     
 \begin{equation}\label{point condition}
     \underset{i=1}{\overset{n}{\cup}}\{\pi_2(I^i(\bar{z},\tau))\}=\mathcal{P} \text { and diameter}(M((\bar{z},\tau),m)) \to 0, \text { as } m \to \infty.
     \end{equation}
      Let $B(\bar{z},sb)\subset \Delta_{b,2b}$ such that
      $B(\bar{z},sb) \subset M((\bar{z},\tau),n) \mathbb{R}_+^d$ and
 $\lambda\left(B(\bar{z},sb)\times \{\tau\}\cap V^c\right)>(1-\epsilon)\lambda(B(\bar{z},sb))$.
 Let $U= p_{\Delta}(B(\bar{z},\frac{sb}2))$.
 Choose $k_0$ depending on $U$ so that Lemma \ref{matrices} and Proposition \ref{compare} hold. 
 Choose $a<\frac{b}{2^{k_0}}$, $B(\bar{x}_a,ar)$ and $\sigma_a$ 
 satisfying the conclusion of Lemma \ref{run condition}. 
 By Lemma \ref{matrices}  it follows that there is a set $\mathcal{V}_{\log(\frac{b}{a})}(U)$.
 Following the proof of Proposition \ref{hits well}, 
 
 \begin{multline}\label{point condition2}  
 \lambda(V \cap p_{\Delta}^{-1}(U)  \cap \Delta_{b\beta^{-1},2qb}) \leq 
  \lambda\left(\underset{i=0}{\overset{\infty}{\cup}}\mathcal{I}^{-i} V \cap  \left(V \cap p_{\Delta}^{-1}(U)
  \cap \Delta_{b\beta^{-1},2qb}\right)\right)\\ \geq
 \lambda \left( \underset{i=0}{\overset{\infty}{\cup}}\mathcal{I}^{-i}( B(\bar{x}_a,ra)\cap V) \cap  \left(V \cap p_{\Delta}^{-1}(U)
  \cap \Delta_{b\beta^{-1},2qb}\right)\right)\\ \geq
  \lambda \left( \underset{M \in \mathcal{V}_{\log(\frac b a )}(U)}{\cup} M(B(\bar{x}_a,ra)\cap V) \cap  \left(V \cap p_{\Delta}^{-1}(U)
  \cap \Delta_{b\beta^{-1},2qb}\right)\right)\\ =
  \lambda \left( \underset{M \in \mathcal{V}_{\log(\frac b a )}(U)}{\cup} M(B(\bar{x}_a,ra)\cap V) \right)
  \\ = |\mathcal{V}_{\log( \frac b a )}(U)|\lambda(B(\bar{x}_a,ra) \cap V).
 \end{multline}
 The second to last equality follows by Lemma \ref{size} and conditions 1 and 2 in the conclusion of Lemma \ref{matrices}. The last equality follows because  our matrices preserve $\lambda$ and because condition 3 in the conclusion of Lemma \ref{matrices} says the union is a disjoint union. By the cardinality estimate in Lemma \ref{matrices} and (\ref{point condition2}), we obtain
\begin{equation}\label{point condition3}
 \lambda(V \cap p_{\Delta}^{-1}(U)
  \cap \Delta_{b\beta^{-1},2qb})\geq C_1 r_{1/2 }^{d}\lambda(\Delta_{b,2b})\lambda_0(U).
 \end{equation}
  Letting $\epsilon=\frac{s}{3}$ so that there exists $l$ with 
  $$ p_{\Delta}^{-1}(U)\cap \Delta_{b(1+l\epsilon),b(1+(l+1)\epsilon)}\subset B(\bar{z},sb).
  $$
   Proposition \ref{compare} and (\ref{point condition3}) imply that 
$$ 
\lambda \left(V \cap p_{\Delta}^{-1}(U) \cap B\left(\bar{z},sb \right)\right)\geq
  C_2 r_{1/2 }^{d}\lambda\left(B\left(\bar{z},sb\right)\right).
$$

  We have that $(\bar{z},\tau)$ is not a Lebesgue density point for $V^c$ because $r_{1/2}>0$ is fixed and $q=2^n$ are fixed by $\bar{z},\tau$ (and not $s$). It follows that there is a $\lambda$-full measure set (the points satisfying  (\ref{point condition})  whose elements can not be  Lebesgue density points of $V^c$, therefore $V^c$ is a null measure set.
\end{proof}

\section{Selmer algorithm}

The aim of this section is to prove Corollary \ref{exact2}. In order to do it the {\it Selmer algorithm} \cite[p. 53-61]{sc}  will be related to a  variation of the Rauzy type algorithm so that Theorem \ref{main} can be applied. However, as we will see, Selmer algorithm does not satisfy  Assumption $1$ in Subsection $3.3$, 
therefore it is not a Rauzy induction type algorithm. Nevertheless we will prove that Selmer algorithm satisfies Assumption 3 in Subsection $4.1$ which is the statement of  \cite[Corollary 1.7]{ker}.

Let $\bar x=(x[1] ,\ldots, x[d]) \in \Sigma_d$ and $\sigma_{\bar x}$ be the permutation which arranges $x[1] ,\ldots, x[d-1],x[d]-x[1]$ in ascending order. If the point  $\bar x$ is irrational, then the iterates of the  map $S$ (\ref{selmer})
at $\bar x$  are  well defined.  
In matricial form the map $S$  equals
$$ 
S(\bar x)= \sigma_{\bar x} \left( \begin{array}{ccccccc}
1 &   0 &                & 0  & 0\\
0  & 1  &                & 0 & 0\\
   &      &  \ddots  && \\
0 & 0   &              &1&0 \\
-1 & 0   &           & 0 &1
\end{array} \right)
\left( \begin{array}{c}
x[1] \\
\vdots \\
\vdots \\
x[d]
\end{array} \right),
$$
therefore there is a nonnegative  elementary  matrix,  $M(\bar x,1)$, such that
$$
M(\bar x,1)S(\bar x)= \bar x.
$$

It is proved in  \cite{sc} that the cone 
$$
\Omega_d=\{\bar x \in \mathbb{R}_+^d: x[1] \leq  x[2] \leq \ldots \leq x[d] \leq x[1] + x[2] \}
$$ 
is an absorbing set for $S$ which means that
$$
\cup_{k\geq 0} S^{-k}( \Omega_d)= \mathbb{R}_+^d \;\; \mbox{ almost surely.}
$$
Let $\bar x\in \Omega_d$ be irrational, then there are two possibilities
$$
\sigma_{\bar x}=(1d2\cdots (d-1)) \; \mbox{ or } \sigma_{\bar x}=(d12\cdots (d-1)).
$$
We want to keep track how the coordinates have been exchanged to maintain the ascending order. So, if after $k$ iterations the permutation is $\sigma=(1'\cdots d')$, then after $k+1$ iterations the  permutations are
$$
\pi^{(1)}=(1'd'2'\cdots (d-1)') \; \mbox{ or } \pi^{(2)}=(d'1'2'\cdots (d-1)').
$$
The cone $\Omega$ is partitioned into two sub-cones
$$
\Omega^{(1)}= \{\bar x \in \Omega: x[d] \geq 2x[1]\} \; \mbox{ and }\;  
\Omega^{(2)}= \{\bar x \in \Omega: 2x[1] > x[d']\}
$$

Next throughout the section $S$ denotes the restriction of  (\ref{selmer})  to $\Omega$ 
$$
S:\bar x \in \Omega \longmapsto
\left\{ \begin{array}{ll}         
          (x[1],x[d]-x[1],x[2], \ldots ,x[d-1]), & \mbox{if $\bar x \in \Omega^{(1)}$} \\
          (x[d]-x[1], x[1],x[2], \ldots ,x[d-1]), & \mbox{if $\bar x \in \Omega^{(2)}$}
\end{array}
\right.
$$
and $S(\Omega^{(i)}=\Omega$, for  $i=1,2$.
In matricial form, if $\bar x \in \Omega$, the map $S$ writes
\begin{equation}\label{selmer matrix}
S(\bar x)= 
\left\{ \begin{array}{ll}
\left( \begin{array}{ccccccc}
1 &   0 &                & 0  & 0\\
-1  &  0 &                & 0 & 1\\
0  & 1  &                & 0 & 0\\
   &      &  \ddots  && \\
0 & 0   &           & 1 &0
\end{array} \right)
\left( \begin{array}{c}
x[1] \\
\vdots \\
\vdots \\
x[d]
\end{array} \right),\;\; \mbox{ if } \;\; \bar x \in \Omega^{(1)}(\pi)\\
\left( \begin{array}{ccccccc}
-1  &  0 &                & 0 & 1\\
1 &   0 &                & 0  & 0\\
0  & 1  &                & 0 & 0\\
   &      &  \ddots  && \\
0 & 0   &           & 1 &0
\end{array} \right)
\left( \begin{array}{c}
x[1] \\
\vdots \\
\vdots \\
x[d]
\end{array} \right),\;\; \mbox{ if } \;\; \bar x \in \Omega^{(2)}(\pi)).
\end{array}
\right.
\end{equation}

Let $\mathcal{P}$ be the set of all permutations on the $d$ letters $1,2, \ldots,d$. The dynamics of  $S$ defines an oriented graph $\mathcal{G}$ whose nodes are the permutations $\pi=(1'\cdots d') \in\mathcal{P}$. We find in Figure 1 a partial picture of the graph which indicates that every node $\pi=(1'\cdots d')$ has exactly two followers and two predecessors in the graph.

\begin{picture}(100,130)(-70,-20)

\put(40,80){\vector(1,-1){20}}

\put(60,45){$(1'\cdots d')$}

\put(40,20){\vector(1,1){20}}
\put(105,58){\vector(1,1){20}}

\put(0,85){$(2'3'\cdots d'1')$}
\put(-5,5){$(1'3'4'\cdots d'2')$}
\put(120,85){$(1'd'2'\cdots (d-1)')$}
\put(120,5){$(d'1'2'\cdots (d-1)')$}
\put(105,40){\vector(1,-1){20}}
\put(58,-10){{\bf Figure 1}}
\end{picture}

In the case $d=3$, where
$$
\mathcal{P}=\{(123),(132),(312),(213),(321),(231)\},
$$
the graph can be fully described, see Figure 2.

\begin{picture}(100,130)(-70,-20)
\put(0,85){$(123)$}

\put(120,85){$(132)$}
\put(15,80){\vector(1,-1){20}}
\put(40,90){\vector(1,0){60}}
\put(100,85){\vector(-1,0){60}}

\put(25,45){$(312)$}
\put(95,45){$(321)$}
\put(60,48){\vector(1,0){25}}
\put(85,44){\vector(-1,0){25}}
\put(35,40){\vector(-1,-1){20}}
\put(105,58){\vector(1,1){20}}

\put(0,5){$(231)$}
\put(120,5){$(213)$}
\put(125,18){\vector(-1,1){20}}
\put(40,10){\vector(1,0){60}}
\put(100,5){\vector(-1,0){60}}
\put(135,80){\vector(0,-1){60}}
\put(10,20){\vector(0,1){60}}

\put(55,-12){{\bf Figure 2}}
\end{picture}

\subsection{Proof of Corollary \ref{exact2}}

Using the graph $\mathcal{G}$, we define  a map which is a  version of the map $S$ with the  setup of a variation of the Rauzy induction type algorithm 
$$
\mathcal{S}: \Omega \times \mathcal{P} \to  \Omega\times \mathcal{P}
$$
given by
$\displaystyle
\mathcal{S}(\bar x, \pi) =
\left\{ \begin{array}{ll}         
          (S(\bar x), \pi^{(1)}) & \mbox{if $\bar x \in \Omega^{(1)}$} \\
          (S(\bar x), \pi^{(2)}), & \mbox{if $\bar x \in \Omega^{(2)}$.}
\end{array}
\right.$

Next we show that the graph $\mathcal{G}$ is {\it path connected} which means that any two nodes are linked by a path. 

\begin{lem} 
The graph $\mathcal{G}$ is path connected.
\end{lem}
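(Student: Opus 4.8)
The statement to prove is that the directed graph $\mathcal{G}$ built from the dynamics of the Selmer algorithm $S$ restricted to $\Omega$ is path connected. The plan is to exploit the explicit description of the two outgoing arrows at each node: from $\pi=(1'\cdots d')$ one reaches the two nodes $\pi^{(1)}=(1'd'2'\cdots(d-1)')$ and $\pi^{(2)}=(d'1'2'\cdots(d-1)')$. In other words, at each step we either move the last entry into the second slot, or move the last entry into the first slot, and in both cases the remaining entries $2',\ldots,(d-1)'$ shift right by one while $1'$ either stays in the front or gets pushed one slot back. The key observation is that these operations are (essentially) generated by a long cycle together with a simple transposition, so iterating them should reach every permutation from any starting one.

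\textbf{Key steps.} First I would fix a convenient target, say the identity permutation $(12\cdots d)$, and show that from every node $\pi$ there is a path to $(12\cdots d)$; by reversing the role of source and target (or by noting the argument is symmetric enough) this gives path connectedness, since a path $\pi_1 \to (12\cdots d)$ followed by a path built the same way would not literally compose, so instead I would prove the stronger symmetric statement directly: from any $\pi$ one can reach any $\pi^\ast$. Second, I would analyze the semigroup generated by the two maps $\pi \mapsto \pi^{(1)}$ and $\pi \mapsto \pi^{(2)}$ on the symmetric group. Writing $c$ for the operation ``cyclically rotate: send $(1'2'\cdots d')$ to $(2'3'\cdots d'1')$'' — note this is precisely $\pi^{(1)}$ read off after observing that moving $d'$ to the second slot and shifting is the same as a rotation conjugated appropriately — one sees that repeated application of one generator sweeps through a full cycle of positions, and the other generator provides the transposition needed to change the cyclic order. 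Third, I would use the standard fact that a $d$-cycle and an adjacent transposition generate all of $S_d$, and translate the group-generation statement into a reachability statement in the graph by checking that inverses are not actually needed — because any element of $S_d$ is a product of the generators without inverses (the generators have finite order, so $g^{-1}=g^{k-1}$ is itself a positive word), so every permutation is reachable from the identity by a directed path, and by relabeling, from any permutation.

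\textbf{Main obstacle.} The delicate point is the bookkeeping: making sure the two graph operations $\pi\mapsto\pi^{(1)},\pi\mapsto\pi^{(2)}$ really do generate a transitive action, and in particular that one does not get trapped in a proper subgroup (for instance only reaching even permutations, or only permutations fixing a certain structure). One must verify carefully, perhaps by an explicit small-case check for $d=3$ (using Figure 2, which is already fully drawn and visibly connected) and then an inductive or direct argument for general $d$, that the composite operations include both a $d$-cycle and a transposition — equivalently that no invariant is preserved. A clean way to finish is to show that from $(12\cdots d)$ one can reach $(21 3\cdots d)$ (a single adjacent transposition) and also a $d$-cycle, then invoke that these generate $S_d$; the shift-structure of the two operations makes producing the $d$-cycle immediate, while producing the transposition requires combining the two operations in the right order, which is where the real verification lies.
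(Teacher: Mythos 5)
Your strategy is genuinely different from the paper's. The paper gives an explicit inductive construction: starting from an arbitrary $\pi$, it applies suitable powers $B^m$, $B^k$, $A^{\ell}$, $B^i$ of the shift operations to extend an initial block $12\cdots i$ of $\pi$ to $12\cdots i(i+1)$, eventually reaching $(12\cdots d)$. You instead translate reachability into a group-generation question: the moves $A,B$ act by composition with fixed permutations $a,b$, and since $S_d$ is finite the monoid they generate equals the group $\langle a,b\rangle$, so the directed graph is strongly connected iff $\langle a,b\rangle=S_d$. This is a cleaner reformulation, and it automatically delivers the two-way reachability one actually needs (from any $\pi$ to any $\pi'$), whereas the paper's reduction to reaching the identity relies implicitly on equivariance of the shifts under relabelling of letters.

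As written, however, your proposal has a gap at exactly the step you flag as ``where the real verification lies'': you do not exhibit the transposition, and you also misidentify which shift is the rotation. The move $\pi^{(1)}=(1'd'2'\cdots(d-1)')$ fixes the first slot, so it is a $(d-1)$-cycle on positions $2,\dots,d$; it is $\pi^{(2)}=(d'1'2'\cdots(d-1)')$ that is the full cyclic rotation, corresponding to the $d$-cycle $b=(1\,d\,(d-1)\,\cdots\,2)$. With $a=(2\,d\,(d-1)\,\cdots\,3)$ (fixing $1$) the missing computation is short: $a^{-1}b=(1\,2)$, and since $a$ has order $d-1$ one has $a^{-1}=a^{d-2}$, so $(1\,2)=a^{d-2}b$ is a positive word in $a,b$, realized in the graph by the directed path $B A^{d-2}$ (for $d=3$ this is $BA$, which indeed sends $(1'2'3')$ to $(2'1'3')$). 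Since $b^{-1}=(1\,2\,\cdots\,d)$, the group generated by $(1\,2)$ and $b$ is $S_d$. Once this calculation is inserted your argument is complete and, in my view, more transparent than the paper's block-building induction; but the proposal as it stands leaves the decisive verification undone.
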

\begin{proof} It suffices to show that any permutation $\pi=(1'\cdots d')$ can be transformed into the permutation $(1\cdots d)$ through a sequence of shifts and $vice$ $versa$. 

The shift on the graph is defined by the following operations:
$$
A(1'\cdots d')=(1'd'2'\cdots (d-1)') \; \mbox{ and } B(1'\cdots d')=(d'1'2'\cdots (d-1)').
$$

Assume that the block $12\cdots i$ appears in $\pi$. If $i=d$, we are done.

Let $1\leq i <d$ and we have $\pi=(1' \cdots a' 12\cdots i (a+i+1)'\cdots d')$. 
Next we will shift $\pi$ in order to obtain a permutation which contains the block  $12\cdots i(i+1)$.
Let $m=d-a$. A suitable shifting of $\pi$ gives
$$
B^m(1' \cdots a' 12\cdots i (a+i+1)'\cdots d')= (12\cdots i (a+i+1)'\cdots d'1' \cdots a').
$$
So it suffices to consider $\pi=(12\cdots i (i+1)'\cdots d')$, where $(i+1)'\neq i$.
Therefore the element $i+1$ belongs to the block $(i+1)'\cdots d'$ which means that there exists $i+1\leq b \leq  d$ such that $b'=i+1$. Thus setting  $k=d-b+1$,  we obtain 
$$
B^k(\pi)=((i+1)(b+1)'\cdots d'12\cdots i (i+1)'\cdots (b-1)').
$$
Setting $\ell=b-i+1$, we obtain 
$$
A^{\ell}(B^k(\pi))=((i+1)(i+1)'\cdots d'  12\cdots i ),
$$
finally
$$
B^i((A^{\ell}(B^k(\pi)) )=(12\cdots i(i+1)(i+1)'\cdots d'  ).
$$

This proves the claim.
\end{proof}

Using the graph we define  a map which is a  version of the map $S$ with the  set up which is a variation of the Rauzy induction type algorithm 
$$
\displaystyle \mathcal{S}:\cup_{\pi \in \mathcal{P}} \Omega(\pi) \to \cup_{\pi \in \mathcal{P}} \Omega(\pi),
$$
given by, if
$ \bar x \in \Omega(\pi)^{(i)}$, then $\mathcal{S}( \bar x) \in \Omega(\pi_i)$, for $i=1,2$.

\begin{lem} 
Let $1\leq i \leq d$ and $\mathcal{P}'=\{ \pi=(1'\cdots d') \in \mathcal{P}: 1'\neq i\}$, then the set $\displaystyle \cup_{\pi \in \mathcal{P}'} \Omega(\pi)$ is a null Lebesgue measure set.
\end{lem}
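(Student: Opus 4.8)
The plan is to show that if a point $\bar x \in \Omega$ is irrational, then after sufficiently many iterations of $S$ the first coordinate of the associated permutation must equal the index $i$; iterating $S$ amounts to multiplying by the shift operators $A$ or $B$ on the permutation, and both of these push the new leading symbol to be $1'$ (the old first coordinate) or $d'$ (the old last coordinate). So the key observation is that after $d$ steps every original position has had a chance to be cycled into the first slot, but more importantly the dynamics of $S$ on $\Omega$ forces a strong recurrence: for a.e.\ $\bar x$ the orbit visits every node of $\mathcal{G}$ infinitely often (by path-connectedness of $\mathcal{G}$, which was just proved, together with the ergodicity-type input from Kerckhoff used throughout Section 4). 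In particular the orbit enters a node with first coordinate $i$. Since $\cup_{\pi \in \mathcal{P}'}\Omega(\pi)$ is $S$-invariant in the appropriate sense only if it never reaches such a node, the set of points whose entire forward orbit avoids $\{\pi : 1' = i\}$ must be small.

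More precisely, first I would observe that the ``leading symbol'' of the permutation is the index $j$ of the coordinate that is currently smallest, i.e.\ $x[1] = \min_k x[k]$ in the $\Sigma_d$-coordinates translates, under the bookkeeping permutation $\pi = (1'\cdots d')$, to: the minimal coordinate sits in the original slot $1'$. Now the matrices in (\ref{selmer matrix}) show that under one step of $S$ the new minimal coordinate is either $x[1]$ (old minimum, so $1'$ unchanged) or $x[d]-x[1]$, which sits in the original slot that previously held $x[d]$, namely $d'$. Hence $1'$ after one step is either the old $1'$ or the old $d'$. Iterating, $1'$ after $k$ steps is always one of the original slots that appeared somewhere in the permutation — and I would argue, using that the largest coordinate $x[d]$ strictly decreases along the subsequence of $\Omega^{(2)}$-steps and that such steps occur infinitely often for irrational $\bar x$ (Assumption/convergence $\pi_1(S^n\bar x)\to 0$), that every original slot eventually becomes the leading slot.

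The cleanest route, though, is simply: by the $\lambda_0$-ergodicity of the normalized map (which is available here exactly as in Section 4, since Selmer's algorithm satisfies Assumption 3 of Subsection 4.1 as stated in the section preamble and will be proved in Proposition \ref{selmer balanced general}), for $\lambda$-a.e.\ $(\bar x,\pi)$ the forward $\mathcal{S}$-orbit equidistributes and in particular visits every node of the path-connected graph $\mathcal{G}$, hence visits a node with $1' = i$. The set $\cup_{\pi\in\mathcal{P}'}\Omega(\pi)$ consists of configurations whose current node has $1'\neq i$; but a.e.\ point leaves this set under some iterate of $\mathcal{S}$, and the set is a finite union of full-dimensional cones, so to conclude $\lambda\bigl(\cup_{\pi\in\mathcal{P}'}\Omega(\pi)\bigr)=0$ I would invoke the corresponding statement at the level of the invariant decomposition — that is, this union must actually be \emph{empty} up to measure zero because one can never have $1'\neq i$ for all small enough $x[1]$: when $x[1]=\min$ is small the geometry of $\Omega$ (namely $x[d]\le x[1]+x[2]$) forces, after one induction step, the slot structure to have already cycled through; I would make this last point explicit by direct inspection of the two branches.

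The main obstacle I expect is precisely pinning down the last step rigorously: translating ``the forward orbit visits a node with $1'=i$ a.e.'' into ``the cone of nodes with $1'\neq i$ has measure zero.'' These two are not the same unless one knows the union $\cup_{\pi\in\mathcal{P}'}\Omega(\pi)$ is, up to null sets, \emph{backward} invariant or empty — so the real content is a combinatorial claim about $\mathcal{G}$: there is no node $\pi$ with $1'\neq i$ all of whose predecessors also have first coordinate $\neq i$ in a way consistent with the $\Omega$-constraint. I would resolve this by noting that the predecessors of $\pi=(1'\cdots d')$ are obtained by inverting $A$ and $B$, i.e.\ by moving $2'$ or $1'$ to the end, and tracking which indices can occupy the first slot; combined with the fact (from the graph structure, Figure 1) that every node has exactly two predecessors, a short case analysis shows the only invariant sub-union of cones closed under this is all of $\Omega\times\mathcal{P}$, forcing the complement $\cup_{\pi\in\mathcal{P}'}\Omega(\pi)$ to have zero measure.
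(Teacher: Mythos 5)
The paper's proof is entirely elementary and never touches ergodicity: taking $i=d$, it assumes the forward $\mathcal{S}$-orbit of an irrational $\bar x$ never has coordinate $d$ as the minimum, observes that coordinate $d$ is then reduced by the current minimum $y^{(n)}$ once every $d-1$ steps while never itself supplying a minimum, so that $x[d]=\sum_{k\geq 0}y^{(k(d-1))}$ with the sequence $y^{(n)}$ determined by $x[1],\dots,x[d-1]$ alone; the trapped set is therefore contained in the graph of a measurable function of $d-1$ variables and is null.

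Your route — ergodicity of the normalized map plus path-connectedness of $\mathcal{G}$ — has two genuine problems. First, it is circular: ergodicity of (the normalization of) $\mathcal{S}$ is the eventual output of Section 5, not an input; at this stage only the existence of the $\bar{S}$-invariant density on $\Omega_1$ is available, and Theorem \ref{main} cannot yet be applied because $\mathcal{S}$ is shown to fail Assumption 1 of Subsection 3.3. Second, the obstacle you flag at the end is fatal to your patch: $\cup_{\pi\in\mathcal{P}'}\Omega(\pi)$ is a nonempty finite union of full-dimensional cones, so it is never Lebesgue-null as literally written; what the paper's proof actually establishes (and what must be meant) is that the forward-trapped set $\cap_{k\geq 0}\mathcal{S}^{-k}\bigl(\cup_{\pi\in\mathcal{P}'}\Omega(\pi)\bigr)$ is null. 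Your combinatorial "backward closure" sketch is aimed at the literal statement and therefore cannot succeed even if completed, whereas the paper's direct dimension-drop argument sidesteps both issues and needs no dynamical input beyond $\bar x^{(k)}\to\bar 0$.
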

\begin{proof}  
For simplicity let $i=d$. Let $\bar x \in \displaystyle \cup_{\pi \in \mathcal{P}'} \Omega(\pi)$ be irrational and $\bar x^{(k)}=\mathcal{S}^k( \bar x)$, for every $k\geq 0$. As $x[d]^{(k)}> y^{(k)}=\min_{1\leq j \leq d} x[j]^{(k)}$, for every $k\geq 0$, we have that 
$$
x[d]^{(k(d-1))}=x[d]-y^{(0)}-\ldots - y^{(k(d-1))}
$$
which implies that $x[d]= \sum_{k\geq 0} y^{(k(d-1))}$. On the other hand, the sequence $y^{(n)}$ depends only on $x[1],\ldots, x[d-1]$. Therefore
the set $\displaystyle \cup_{\pi \in \mathcal{P}'} \Omega(\pi)$ is a null Lebesgue measure set.
\end{proof}

On the other hand, the Selmer map $\mathcal{S}$ does not satisfy Proposition $1.4$ in \cite{ker}. It means in the language of \cite{ker} that there exist sets of isolated columns which are not the entire set of columns. Therefore the map $\mathcal{S}$ is not a Rauzy induction type algorithm, as it does not satisfy Assumption $1$ in Subsection $3.3$. Nevertheless next we will prove that the map $\mathcal{S}$ satisfies Assumption 3 in Subsection $4.1$ which is the statement of \cite[Corollary 1.7]{ker}.

\subsection{Proof of Assumption 3 Subsection 4.1}

We consider the Selmer Algorithm $S$ given by equation (\ref{selmer}) restricted to the absorbing set $\Omega=\Omega_d$.
Let $\bar{S}$ denote the normalized Selmer algorithm on $d$ letters. Let $\Omega_1=\Omega \cap \Delta$. Let $\lambda_0$ denote Lebesgue measure on $\Delta$. Let $p$ be the preserved measure for $\bar{S}$ on $\Omega_1$. Let the matrix $M$ denote a fixed past. So $M$ is given by the product of inverses of matrices as in equation (\ref{selmer matrix}). We study the measure $p(U|M)$, the conditional measure given a fixed past.
 If $v$ is a vector let $v_i$ denote its $i^{th}$ entry.  Critical position refers to the first position.

The aim of this section is to establish the next result which means that Assumption 3 in Subsection 4.1 also holds for Selmer algorithm.

\begin{prop} \label{selmer balanced general}There exists $\beta, K,\rho$ such that the conditional probability that the matrix will become $\beta$-balanced before the largest column increases by a factor of K is at least $\rho$ regardless of the past.
\end{prop}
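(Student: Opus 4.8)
The plan is to analyze the structure of the Selmer matrices in equation (\ref{selmer matrix}) and show that, conditioned on any past matrix $M$, there is a uniformly positive probability of hitting a $\beta$-balanced matrix before the largest column grows by a factor of $K$. The key point, which distinguishes Selmer from a genuine Rauzy induction type algorithm, is that only the \emph{critical} (first) column is ever modified by a step of the algorithm; the remaining columns are merely permuted (cyclically, in a controlled way). Thus the column sums $C_1(M),\dots,C_d(M)$ form a multiset that, at each step, has its smallest element replaced by (old smallest) $+$ (current first-column sum), and then everything is reindexed. I would first make this bookkeeping precise: track the ordered list of column sums $c^{(1)}\le c^{(2)}\le\dots\le c^{(d)}$ and show that one step of $\mathcal S$ sends the pair (first column sum, multiset of the rest) to a new such pair in which the old first column sum is pushed to the interior and the old second-smallest sum becomes the new candidate for the critical slot.

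\textbf{Key steps.} First I would set up notation: given a past $M$, write $m_j=C_j(M)$ and $m_{\min}=\min_j m_j$, $m_{\max}=C_{\max}(M)$. The matrix is $\beta$-balanced exactly when $m_{\max}\le\beta\, m_{\min}$. Second, I would show that running $S$ forward from the normalized point $\bar x\in\Omega_1$ for $d-1$ consecutive steps (so every column has had a turn in the critical position) the worst ratio $m_{\max}/m_{\min}$ cannot increase and in fact contracts on a set of $\bar x$ of uniformly positive $p$-measure — this is where one uses that $p$ has a density bounded below on $\Omega_1$ and that the absorbing condition $x[d]\le x[1]+x[2]$ keeps $x[1]$ from being too small relative to the other coordinates, so that the additive update ``smallest column sum $\mathrel{+}=$ first column sum'' genuinely shrinks the spread by a definite amount when the current state is unbalanced. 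Third, one must control $C_{\max}$: since only one column changes per step and it changes by adding another (smaller or comparable) column, $C_{\max}$ at most doubles per step, so after the bounded number of steps needed to balance, $C_{\max}$ has grown by at most a fixed factor — this gives the constant $K$ (one can take $K^d$ to dominate the product of the per-step growth factors). Fourth, assemble: choose $\beta$ large enough that ``unbalanced'' forces the contraction in Step 2, iterate the resulting geometric drift argument $O(\log\beta)$ times, and take $\rho$ to be the (positive) product of the per-round conditional probabilities, which is independent of $M$ because the conditional law $p(\cdot\mid M)$ restricted to $\Omega_1$ is comparable to $\lambda_0$ with constants depending only on $d$.

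\textbf{Main obstacle.} The hard part will be Step 2: quantifying that when the column-sum multiset is out of balance, a definite-measure set of points $\bar x$ over the next few steps actually reduces the ratio, rather than merely failing to increase it. Because the algorithm only touches the critical column, the growth of the \emph{other} columns is passive, and one must argue that the critical column, as it cycles through all positions, repeatedly gets augmented in a way that pulls the smallest sum up toward the mean. I would handle this by working with the ``defect'' $m_{\max}-m_{\min}$ (or $\log(m_{\max}/m_{\min})$) as a potential function and showing it drops by a fixed additive amount with positive probability per block of $d-1$ steps, using the explicit form of the two matrices in (\ref{selmer matrix}) together with the bound on the density of $p$ from the theory in \cite{sc}. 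The path-connectedness lemma and the almost-sure entry into the subcone where the first coordinate equals the global minimum (the preceding two lemmas) ensure that the relevant configurations are reached with positive probability regardless of the past, which is exactly what makes $\rho$ uniform in $M$.
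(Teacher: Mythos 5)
Your proposal correctly identifies the structural peculiarity of Selmer (only the critical column is additively modified per step, the rest are permuted) and correctly identifies that one must control the growth of $C_{\max}$ while waiting for balancing to occur. But there is a central gap and a conceptual confusion that together undermine the argument.

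The gap is in your Step 2, where you assert that ``running $S$ forward from the normalized point $\bar x\in\Omega_1$ for $d-1$ consecutive steps (so every column has had a turn in the critical position)\dots''. This is simply not what happens. The coordinate in the critical position can remain there for an arbitrarily long time: whenever $x[d]\geq 2x[1]$ (the set $\Omega^{(1)}$), the current smallest coordinate stays smallest after the step, so the \emph{same} column keeps being augmented while all the others are merely cycled. The number of steps before the critical column rotates out is of order $1/x[1]$, which is unbounded on $\Omega_1$. Establishing that, with uniformly positive conditional probability, the critical position cycles through a fresh variable before the largest column grows by a bounded factor is precisely the content of the paper's Lemma~\ref{leave critical}, Proposition~\ref{v1 decay}, Corollary~\ref{v12 decay}, Lemma~\ref{combine} and Lemma~\ref{d critical}. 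Your proposal treats as automatic the very thing that needs proof, and that is where all the measure-theoretic work (the Veech Jacobian bound of Equation~(\ref{jac estimate}), Claims~\ref{bounded jac} and~\ref{small meas}) actually gets used. Related to this, your potential function $\log(m_{\max}/m_{\min})$ need not drift downward: the column that gets augmented is indexed by the \emph{smallest coordinate} of $\mathcal{I}^k\bar x$, not the smallest column sum, and these need not coincide. Indeed when the same coordinate sits in the critical position for many steps its column sum can become the \emph{largest}, and the further augmentations then increase your potential rather than decrease it.

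Put differently, the paper does not try to show that a balance functional contracts per $(d-1)$-block; it instead tracks which of the $d$ variables have visited the critical position, proves that the variable corresponding to the largest coordinate will do so with conditional probability at least $1/4$ before $C_{\max}$ grows by a bounded factor (Lemma~\ref{d critical}), and then obtains the proposition by iterating this $d$ times. The ``balancedness once every variable has cycled through'' conclusion is then the isolated-column analysis of Kerckhoff~\cite{ker}, not a direct ratio estimate. If you want to salvage a potential-function argument, you would need to first prove a quantitative version of the cycling statement, at which point you would have reproduced the bulk of the paper's proof anyway.
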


We recall from Veech \cite[Proposition 5.2]{ietv} that the conditional probability, $p(U|M)$, given a fixed matrix from the past $M$ is 
\begin{equation}\label{jac estimate}
\int_U\frac{dv_1\cdots dv_d}{(C_1v_1+...+C_dv_d)^d}
\end{equation}
 where $C_i$ is the sum of the $i^{th}$ column of $M$.

\begin{prop} \label{v1 decay} 
For any $0<\eta<1$ and any $\epsilon>0$ there exists $L_{\eta,\epsilon}:=L$ such that with probability $\eta$ there exists $i$ such that $S^i(\bar{v})_1<\epsilon v[ 1]$ and the largest column hasn't increased by a factor of $L$.
\end{prop}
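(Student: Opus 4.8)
The plan is to control the conditional measure \eqref{jac estimate} and show that the event "$v_1$ shrinks by a factor $\epsilon$ before the maximal column grows by a factor $L$" has probability bounded below, uniformly in the past $M$, by a quantity tending to $1$ as $L\to\infty$. The key observation is that when $\bar v\in\Omega^{(2)}$ (that is, $2v_1>v_d$), one step of $S$ replaces $v_1$ by $v_d-v_1$, which is at most $v_1$, while the column sums change by a bounded amount — in fact, on $\Omega$ the new critical column is comparable to the old first and last columns, so the maximal column $C_{\max}$ is multiplied by at most $2$. Since $\Omega^{(2)}$ has positive conditional measure no matter what $M$ is (one can check $p(\Omega^{(2)}\mid M)$ is bounded below by a constant $\delta_0>0$ depending only on $d$, using \eqref{jac estimate} and the fact that the $C_i$ are all positive and, after absorbing into $\Omega$, comparable), we get a genuine "downward drift" on $v_1$.

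First I would make precise the one-step estimate: there is a constant $c_0>0$, depending only on $d$, such that for every past $M$,
\[
p\bigl(\{\bar v\in\Omega_1: S(\bar v)_1\le \tfrac12 v_1\}\mid M\bigr)\ge c_0,
\]
and moreover on this event the matrix $M'$ recording the new past satisfies $C_{\max}(M')\le 2\,C_{\max}(M)$. This follows by writing out \eqref{jac estimate} on the subregion $\Omega^{(2)}\cap\{v_d-v_1\le\frac12 v_1\}=\Omega^{(2)}\cap\{v_d\le\frac32 v_1\}$ and bounding the integral below by comparing the denominator $(C_1v_1+\dots+C_dv_d)^d$ with $(C_{\max}\sum v_i)^d$ on a fixed-proportion subsimplex. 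Then I would iterate: running the algorithm, at each step we are in the good event with conditional probability at least $c_0$ (a Borel–Cantelli / martingale-type argument over the induced filtration, exactly as in the proof of Lemma \ref{I condition} and Lemma \ref{start}), so after enough steps we accumulate, with probability $\eta$, at least $\log_2(1/\epsilon)$ "halving" steps. Each such step at most doubles $C_{\max}$, and the non-halving steps in between — of which we need to bound the number — also at most double $C_{\max}$ each; choosing the total number of steps $L_{\eta,\epsilon}$ large enough (depending on $\eta$, $\epsilon$, $c_0$, hence only on $\mathcal{I}$) forces the $\log_2(1/\epsilon)$ halvings to occur within a window where $C_{\max}$ has grown by a controlled factor, which we take to be $L=L_{\eta,\epsilon}$.

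The main obstacle I anticipate is the interplay between the \emph{number} of steps needed to collect $\log_2(1/\epsilon)$ successes and the \emph{growth of $C_{\max}$} over those steps: a priori a long run of "bad" steps (where $\bar v\in\Omega^{(1)}$) could inflate $C_{\max}$ before we ever see a halving. To handle this I would use the loop structure of $S$ on $\Omega$ — note $S(\Omega^{(1)})=\Omega$ and the graph has loops — together with the polynomial growth of the matrix norm along loops (Assumption 3's consequence, as used in Lemma \ref{I condition}): this guarantees that $C_{\max}$ cannot grow faster than polynomially in the number of steps along the relevant return times, so the bad runs are benign. A cleaner alternative, which I would try first, is to stop the clock: run the algorithm only until either $C_{\max}$ has grown by a factor $L$ or we have seen $\log_2(1/\epsilon)$ halvings, and show the stopping time is finite a.s.\ and that the first alternative has probability $<1-\eta$ for $L$ large — this reduces the statement to the one-step estimate plus the optional-stopping observation that the conditional halving probability is always $\ge c_0$. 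Either way, uniformity in the past is automatic because $c_0$ in the one-step estimate does not depend on $M$.
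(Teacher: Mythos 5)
Your proposed proof has a genuine gap at its central step. You need a uniform constant $c_0>0$, independent of the past $M$, such that $p(\Omega^{(2)}\mid M)\geq c_0$ (and then narrow to the halving subregion $\{v_d\le\tfrac32 v_1\}$). This is false. On $\Omega_1$ the largest entry satisfies $v_d\geq 1/d$, so the event $\Omega^{(2)}=\{v_d<2v_1\}$, and a fortiori $\{v_d\le\tfrac32 v_1\}$, forces $v_1$ to be bounded away from zero. Meanwhile the conditional density $(C_1v_1+\cdots+C_dv_d)^{-d}$ concentrates near $v_1=0$ precisely when $C_1$ dominates the other columns. A direct check in $d=3$ with $C_1=N$, $C_2=C_3=1$ gives total mass $\asymp N^{-2}$ while the mass of $\Omega^{(2)}$ is $\asymp N^{-3}$, so $p(\Omega^{(2)}\mid M)\to 0$ as $N\to\infty$. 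Your parenthetical "after absorbing into $\Omega$, comparable" conflates comparability of the coordinates of $\bar v$ (which $\Omega$ does control) with comparability of the columns $C_i$ of $M$, which it does not. Column balancedness is precisely the strong property that Proposition \ref{selmer balanced general} is trying to prove — assuming it here is circular — and arbitrarily imbalanced pasts genuinely occur: repeated steps in $\Omega^{(1)}$ inflate $C_1$ linearly while leaving the other column sums fixed. Your "stop-the-clock" alternative has the same hole, since optional stopping still needs a uniform per-step (or per-bounded-window) lower bound to run.

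The paper's proof uses a structurally different mechanism that sidesteps this. Lemma \ref{leave critical} asks only that the critical entry eventually \emph{change}, not that it halve in one step; this can take of order $1/v_1$ steps when $v_1$ is small, but the column growth per block of $d-1$ steps is of order $\max_{i>1}C_i$, and on the event $\{v_1\geq c_\rho\max_{i>1}C_i/C_1\}$ (which, by Claims \ref{bounded jac} and \ref{small meas}, has conditional mass at least $\rho$ uniformly in $M$) these two quantities cancel to give total column growth $\lesssim C_1/c_\rho\leq K_\rho C_{\max}$. The halving is then supplied deterministically by the Claim inside the proof (any entry re-entering the critical position has at least halved since it last left) together with a pigeonhole over the $d$ slots. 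That separation — a uniform probabilistic bound on "something changes" plus a purely combinatorial halving rule — is the idea your attempt is missing.
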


\begin{lem}\label{leave critical}
For any $0<\rho<1$ there exists $K_{\rho}$ so that for any past the conditional probability that the first entry leaves the critical position before the largest column increases by a factor of $K_{\rho}$ is at least $\rho$.
\end{lem}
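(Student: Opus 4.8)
The plan is to work with the explicit conditional measure \eqref{jac estimate} and track the first coordinate $v_1$ under iteration of $\bar S$. The key structural observation is that in the Selmer algorithm the only column of the past matrix that grows at a given step is the one corresponding to the position of the smallest entry, which (because $\bar x\in\Omega$) is always the first position; at each step $S$ subtracts $x[1]$ from $x[d]$ and cyclically moves the first entry to (near) the front after re-sorting. Consequently, as long as $v_1$ remains in the critical (first) position, $C_{\max}$ of the induction matrix grows only through the repeated addition of $C_1$ to one other column, and the ratio $v_1/|v|$ is being steadily diminished. So I would first quantify: if $v_1$ stays critical for the first $m$ steps, then the largest column has grown by at least a factor linear in (some function of) $m$ — in particular, staying critical for a long time forces the largest column to increase a lot. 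This is essentially the polynomial-growth-of-the-loop-matrix phenomenon invoked in Assumption 3 of Subsection 3.3, applied to the specific loop of $S$.

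Next I would use \eqref{jac estimate} to estimate, conditionally on an arbitrary past $M$ with column sums $C_1,\dots,C_d$, the probability that $v_1$ fails to leave the critical position in the next $m$ steps. Staying critical for $m$ more steps confines $\bar v$ to a region $R_m\subset\Omega_1$ that shrinks as $m$ grows — concretely, it forces $v_1$ to be small relative to the running largest coordinate, i.e. $R_m$ is contained in a sub-simplex where $v_1\le c(m)\,|v|$ with $c(m)\to 0$. Integrating $\frac{dv_1\cdots dv_d}{(C_1v_1+\cdots+C_dv_d)^d}$ over $R_m$: because the denominator is homogeneous of degree $d$ and the integrand is a probability density on $\Omega_1$ for every choice of $C_i$'s, a change of variables/scaling argument shows that $p(R_m\mid M)$ is bounded above by a quantity depending only on the geometric size of $R_m$ in $\Delta$ and not on the $C_i$'s — this uniformity over the past is exactly what the lemma demands. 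Since $\lambda_0(R_m)\to 0$, choosing $m$ large makes $p(R_m\mid M)<1-\rho$ uniformly, and then setting $K_\rho$ to be the growth factor of the largest column forced by $m$ steps of staying critical finishes the argument.

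The main obstacle I anticipate is the uniformity in the past: showing that the conditional probability of the "bad" event (staying critical) is bounded by something independent of the past matrix $M$. The danger is that for a very unbalanced past — one column sum $C_j$ enormously larger than the others — the measure \eqref{jac estimate} could concentrate mass in an unexpected corner of $\Omega_1$ and either inflate or deflate $p(R_m\mid M)$ in a way that depends on $M$. The resolution should come from the fact that for every past, \eqref{jac estimate} is a genuine probability measure on $\Omega_1$, together with the scaling homogeneity of the integrand; a careful comparison argument (replacing $R_m$ by a slightly larger scale-invariant region and using that the integrand is monotone in each $v_i$) should yield a bound of the form $p(R_m\mid M)\le \Phi(\lambda_0(R_m))$ with $\Phi$ independent of $M$ and $\Phi(t)\to 0$ as $t\to 0$. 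A secondary, more routine point is verifying the precise relationship between "number of steps spent critical" and "growth factor of $C_{\max}$", which is a direct computation with the matrices in \eqref{selmer matrix}.
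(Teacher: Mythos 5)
The central difficulty you yourself flag --- uniformity of the conditional-probability bound over all pasts $M$ --- is exactly where your plan breaks, and the resolution you sketch does not work. Your bad region $R_m=\{\bar v\in\Omega_1: v_1\le c(m)\}$ is a \emph{fixed} region, independent of the past. But the conditional density $\bigl(C_1v_1+\cdots+C_dv_d\bigr)^{-d}$ concentrates arbitrarily strongly near $\{v_1=0\}$ as the ratio $C_1/\max\{C_i:i>1\}$ grows: for any fixed $c>0$ one can make $p(\{v_1\le c\}\mid M)$ as close to $1$ as desired by choosing a past with $C_1$ very large relative to the other column sums. So a bound of the form $p(R_m\mid M)\le\Phi(\lambda_0(R_m))$ with $\Phi$ independent of $M$ is simply false, and no scaling or monotonicity argument applied to a fixed region can repair this. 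The homogeneity of the integrand gives you nothing here because after normalizing to a probability measure the dependence on the $C_i$'s is precisely what controls where the mass sits.

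The paper's proof dodges this by replacing the fixed region $R_m$ with the $M$-\emph{adapted} region
$U_{t,M}=\bigl\{\bar v\in\Omega_1:\ v_1\le t\,\tfrac{\max\{C_i:i>1\}}{C_1}\bigr\}$.
On $U_{1,M}$ the Radon--Nikodym derivative of $p(\cdot\mid M)$ with respect to $\lambda$ is uniformly comparable to a constant $D$ that does \emph{not} depend on $M$ (Claim~\ref{bounded jac}), while the relative Lebesgue volume $\lambda(U_{\epsilon,M})/\lambda(U_{1,M})$ is $O(\epsilon^{d-1})$ uniformly (Claim~\ref{small meas}). Combining these gives $p(U_{c_\rho,M}\mid M)<1-\rho$ for a suitable $c_\rho$, uniformly in $M$. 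The complementary event, $v_1\ge c_\rho\,\max\{C_i:i>1\}/C_1$, then forces $v_1$ to leave the critical position within roughly $C_1/(c_\rho\max\{C_i:i>1\})$ steps, and a direct matrix computation bounds the resulting growth of the largest column by a factor $K_\rho$ depending only on $c_\rho$ and $d$. Note also that your deterministic step runs in the wrong direction: you want ``$v_1$ not too small $\Rightarrow$ leaves critical before $C_{\max}$ grows by a fixed factor,'' which is what the paper establishes via the observation that $v_1$ leaves critical position in at most $1/v_1+d$ steps --- not ``staying critical forces $C_{\max}$ to have grown.'' Reworking your proof around the $M$-adapted regions $U_{t,M}$ and the uniform comparability of the density on $U_{1,M}$ would close the gap.
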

Let $U_{r,M}=\{\bar{v}\in \Omega_1: v_1\leq r \frac{\max\{C_i(M):i>1\}}{C_1(M)}\}$. 

\begin{claim}\label{bounded jac}
There exists a constant $D$ such that for any $\bar{u},\bar{w}\in U_{1,m}$ we have
$$
\frac 1 D<\frac{C_1u_1+...+C_du_d}{C_1w_1+...+C_dw_d}<D .
$$
Thus the ratio of the Radon derivative of $p(\cdot |M)$ is bounded on $U_{1,M}$ independent of $M$.
\end{claim}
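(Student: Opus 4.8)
The plan is to exploit the geometry of the absorbing cone $\Omega$: after normalization every point of $\Omega_1=\Omega\cap\Delta$ has \emph{all} of its coordinates except possibly the first bounded away from $0$ (and trivially from $\infty$), so the only way the linear form $\bar v\mapsto C_1(M)v_1+\cdots+C_d(M)v_d$ can change by a large factor across $\Omega_1$ is through the single term $C_1(M)v_1$ coming from the critical first column, and the defining restriction $v_1\le C_*/C_1(M)$ of $U_{1,M}$, with $C_*:=\max\{C_i(M):i>1\}$, is precisely what keeps that term under control.

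First I would record two elementary consequences of $\bar v\in\Omega_1$. Since $v_1\le v_2\le\cdots\le v_d$ and $v_1+\cdots+v_d=1$, the largest coordinate satisfies $v_d\ge 1/d$; and since $v_d\le v_1+v_2\le 2v_2$ by the definition of $\Omega$, this forces $v_i\ge v_2\ge\tfrac1{2d}$ for every $i\ge 2$, while of course $v_i\le 1$ for all $i$. Next fix a past $M$. Its entries are nonnegative, since $M$ is a product of the matrices inverse to those in (\ref{selmer matrix}), each of which has entries in $\{0,1\}$; hence $C_i(M)\ge 0$ for all $i$, $C_*\le\sum_{i>1}C_i(M)$, and (as $M$ is invertible) $C_1(M)>0$. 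For $\bar v\in U_{1,M}$ we have $C_1(M)v_1\le C_*$ by definition, so
$$
\tfrac1{2d}\,C_*\ \le\ \sum_{i>1}C_i(M)v_i\ \le\ \sum_{i=1}^{d}C_i(M)v_i\ \le\ C_*+\sum_{i>1}C_i(M)\ \le\ d\,C_* ,
$$
using $v_i\le 1$, $C_1(M)v_1\le C_*$, $\sum_{i>1}C_i(M)\le(d-1)C_*$ and $v_i\ge\tfrac1{2d}$. Consequently, for any $\bar u,\bar w\in U_{1,M}$, the ratio $\big(\sum_iC_i(M)u_i\big)\big/\big(\sum_iC_i(M)w_i\big)$ lies in $[\tfrac1{2d^2},2d^2]$, which proves the first assertion of the claim with $D=2d^2$, a constant depending only on $d$ and not on $M$.

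For the second assertion: by (\ref{jac estimate}) the Radon--Nikodym derivative of $p(\cdot\,|M)$ with respect to $\lambda_0$ on $\Omega_1$ is proportional to $\big(\sum_iC_i(M)v_i\big)^{-d}$, the normalizing constant being irrelevant since it cancels in every ratio; hence the ratio of this derivative at two points of $U_{1,M}$ is at most $D^d=(2d^2)^d$, again independent of $M$. I do not expect a genuine obstacle here: the one thing to get right is the geometric observation that $\Omega$ forces $v_2,\dots,v_d\ge\tfrac1{2d}$, which confines all of the potential ``unboundedness'' of the linear form to the critical coordinate $v_1$ --- and on $U_{1,M}$ that coordinate is itself tamed by definition.
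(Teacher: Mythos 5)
Your proof is correct and follows essentially the same route as the paper's: bound the linear form $\sum_i C_i(M)v_i$ above and below on $U_{1,M}$ by constant multiples of $C_*:=\max\{C_i(M):i>1\}$, using the defining restriction $C_1(M)v_1\le C_*$ for the first term and the geometric fact that $\Omega_1$ forces $v_i\ge\tfrac1{2d}$ for $i\ge 2$ for the lower bound. The only difference is cosmetic: the paper bounds $\sum_{i>1}C_iv_i\le C_*\sum_{i>1}v_i\le C_*$ to get the sharper $D=4d$, whereas your term-by-term estimate $v_i\le 1$ gives $D=2d^2$; either constant is independent of $M$, which is all the claim requires.
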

\begin{proof} 
If $\bar{u}\in U_{1,M}$ then 
$$\underset{i=1}{\overset{d}{\sum}} C_iu_i\leq \max\{C_i:i>1\}+\sum_{j=2}^du_i\max\{C_i:i>1\}< 2\max\{C_i:i>1\}.$$
 If $\bar{u}\in \Omega_1$ then $u_d\geq \frac 1 d$ and so $u_i\geq \frac 1 {2d}$ for all $i>1$. So 
$$ \sum_{i=1}^du_iC_i\geq \frac 1 {2d}\, \max\{C_i:i>1\}.$$
This establishes the claim with $D=4d$.
\end{proof}

\begin{claim} \label{small meas}
There exists a constant $r$ independent of $M$ such that $\frac{\lambda(U_{\epsilon,M})}{\lambda(U_{1,M})}<r\epsilon^{d-1}$.
\end{claim}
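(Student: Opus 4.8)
The plan is to estimate the Lebesgue measure of $U_{\epsilon,M}$ directly from its defining inequality and compare it to $U_{1,M}$. Recall that $U_{r,M}=\{\bar v\in\Omega_1: v_1\le r\,\frac{\max\{C_i(M):i>1\}}{C_1(M)}\}$; write $\tau = \tau(M) = \frac{\max\{C_i(M):i>1\}}{C_1(M)}$, so that $U_{r,M}$ is the slice of the normalized cone $\Omega_1$ on which the first coordinate is at most $r\tau$. The key geometric fact to exploit is that $\Omega_1=\Omega_d\cap\Delta$ is a fixed convex polytope in $\Delta\cong\Delta_{d-1}$ independent of $M$, and inside it $v_1$ ranges over a bounded interval; only the threshold $r\tau$ moves with $r$ and $M$.

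First I would observe that because $\Omega_d=\{x[1]\le\cdots\le x[d]\le x[1]+x[2]\}$, on $\Omega_1$ one has $v_1\le 1/d$ always, and more importantly $v_1$ is bounded below on a positive-measure portion of $\Omega_1$; in particular $\lambda_0(U_{1,M})$ is bounded below by a constant times $\min(1,\tau)^{d-1}$ up to the fixed geometry, and in fact one only needs $\lambda_0(U_{1,M})\ge c\,\tau^{d-1}$ when $\tau\le\tau_0$ for a fixed $\tau_0$ and $\lambda_0(U_{1,M})\ge c$ otherwise. Second, for the numerator, the set $U_{\epsilon,M}$ sits inside the slab $\{v_1\le\epsilon\tau\}\cap\Omega_1$; slicing $\Omega_1$ by the value of $v_1=t$ gives a $(d-2)$-dimensional cross-section whose $(d-2)$-volume is uniformly bounded (again by the fixed polytope geometry), so $\lambda_0(U_{\epsilon,M})\le C\int_0^{\epsilon\tau}dt = C\epsilon\tau$. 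Wait—this only gives a linear bound in $\epsilon$, not $\epsilon^{d-1}$; the point is that the cross-section itself shrinks. More carefully: on $\Omega_1$ the constraints $v_1\le v_2\le\cdots\le v_d\le v_1+v_2$ force, when $v_1=t$ is small, that all of $v_2,\dots,v_d$ are pinned into a region of diameter $O(t)$ around a point (since $v_d\le v_1+v_2\le t+v_2$ and $v_2\le v_3\le\cdots\le v_d$ forces $v_2\le v_d\le v_2+t$, hence $v_2,\dots,v_d$ all lie within $t$ of each other, and summing to $\approx 1$ they are all $\approx 1/(d-1)$ within $O(t)$). Hence the cross-section at height $t$ has $(d-2)$-volume $O(t^{d-2})$, giving $\lambda_0(U_{\epsilon,M})\le C\int_0^{\epsilon\tau}t^{d-2}\,dt = \frac{C}{d-1}(\epsilon\tau)^{d-1}$.

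Combining, $\frac{\lambda_0(U_{\epsilon,M})}{\lambda_0(U_{1,M})}\le \frac{C(\epsilon\tau)^{d-1}/(d-1)}{c\,\tau^{d-1}} = r\epsilon^{d-1}$ in the regime $\tau\le\tau_0$ (where both numerator and denominator scale like $\tau^{d-1}$), and in the regime $\tau>\tau_0$ the denominator is bounded below by a constant while the numerator is still $\le C(\epsilon\tau_0)^{d-1}\cdot(\tau/\tau_0)^{?}$—here one must be slightly careful, but since $U_{\epsilon,M}\subset U_{1,M}$ when $\epsilon<1$ and the threshold $\tau$ eventually exceeds $1/d$ so that $U_{1,M}=\Omega_1$, for large $\tau$ we get $\lambda_0(U_{\epsilon,M})\le C(\epsilon/d)^{d-1}$ and the ratio is again $\le r\epsilon^{d-1}$. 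I expect the main obstacle to be precisely this bookkeeping across the two regimes of $\tau$ together with the verification that the cross-section of $\Omega_1$ at height $v_1=t$ genuinely has $(d-2)$-volume $\Theta(t^{d-2})$ both above and below — the upper bound is what is needed here, and it follows from the ``pinching'' argument above, but making the constants uniform in $M$ (i.e.\ depending only on $d$) is the point that requires care. Everything else is elementary volume estimation on a fixed polytope.
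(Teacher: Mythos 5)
Your core argument is the same as the paper's, just written out in a different idiom. The paper's one-line proof observes that $U_{t,M}$ is a $(d-1)$-dimensional polytope with a distinguished vertex (the corner of $\Omega_1$ where $v_1=0$) from which the edges emanate with lengths proportional to $t\tau$, where $\tau=\max\{C_i:i>1\}/C_1$, and with angles bounded away from degeneracy; hence $\lambda_0(U_{t,M})\asymp (t\tau)^{d-1}$ and the ratio is $\asymp\epsilon^{d-1}$. Your Fubini computation --- slicing by $v_1=t$, using the pinching $v_d-v_2\le v_1=t$ together with $v_2\le\cdots\le v_d$ and $\sum v_i=1$ to get cross-sectional $(d-2)$-volume $O(t^{d-2})$, then integrating over $t\in[0,\epsilon\tau]$ --- is a different but equivalent way of extracting exactly that scaling. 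So in the regime $\tau\le 1/d$, where neither $U_{\epsilon,M}$ nor $U_{1,M}$ is truncated by the rest of $\partial\Omega_1$, your proof and the paper's coincide in substance.

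Where you flag trouble --- the regime $\tau>1/d$, where $U_{1,M}=\Omega_1$ --- your fix is wrong. You claim $\lambda_0(U_{\epsilon,M})\le C(\epsilon/d)^{d-1}$ for $\tau$ large, but $U_{\epsilon,M}=\{v\in\Omega_1:v_1\le\min(\epsilon\tau,1/d)\}$, and once $\epsilon\tau\ge 1/d$ this is all of $\Omega_1$, so $\lambda_0(U_{\epsilon,M})=\lambda_0(\Omega_1)$ is a fixed constant and the ratio equals $1$, not $O(\epsilon^{d-1})$. The inequality you wrote goes in the wrong direction: $\min(\epsilon\tau,1/d)\ge\epsilon/d$ when $\tau>1/d$, so the estimate is a lower bound, not an upper bound. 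To make the statement literally true for all $\epsilon<1$ one therefore needs to know that $\tau(M)$ is bounded over Selmer pasts $M$, or one must restate the Claim with an explicit restriction on $\epsilon\tau$. It is worth noting that this does not obviously hold: under right-multiplication the column sums evolve by $(C_1,\ldots,C_d)\mapsto(C_1+C_d,C_d,C_2,\ldots,C_{d-1})$ for the $\Omega^{(1)}$ step and $(C_1,\ldots,C_d)\mapsto(C_d,C_1+C_d,C_2,\ldots,C_{d-1})$ for the $\Omega^{(2)}$ step, so e.g.\ the past $(A^{-1})^kB^{-1}$ has column sums $(1,k+2,1,\ldots,1)$, giving $\tau=k+2$. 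The paper's terse proof also elides this regime, so you are in good company, but your instinct that ``one must be slightly careful'' here is right and the patch you offered does not close it. The way out in the intended application (Lemma~\ref{leave critical}) is to replace the event $\{v_1\ge c_\rho\tau\}$ by $\{v_1\ge\min(c_\rho\tau,c)\}$ for a fixed constant $c<1/(2d)$; this event does have conditional probability bounded below uniformly, and both cases of the minimum give the required control on the number of steps to leave the critical position.
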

\begin{proof} The sets
$U_{t,M}$ are $(d-1)$-dimensional polygons that are determined by a vertex with sides emanating from it with lengths proportional to $t\frac{\max\{C_i:i>1\}}{C_1}$ and so that each side makes a definite angle with the span of the other sides. 
\end{proof}

\begin{proof}[Proof of Lemma \ref{leave critical}] 
Observe that $v_1$ leaves the critical position in at most $\frac{1}{v_1}+d$ steps. 
By Claim \ref{bounded jac}
$$ \frac{p(U_{t,M})}{p(U_{1,M})}\leq D^d \frac{\lambda(U_{t,M})}{\lambda(U_{1,M})}$$
 and so by Claim \ref{small meas} there exists $c_{\rho}$ such that 
$$p(U_{c_{\rho},M}|M)<(1-\rho)p(U_{1,M}|M).$$
Trivially, the right hand side is bounded by $\rho$. 
 So with conditional probability at least $\rho$ we have that $v_1\geq c_{\rho}\frac{\max\{C_i:i>1\}}{C_1}$ and $v_1$ leaves the critical position in at most $\frac{C_1}{\max\{C_i:i>1\}c_{\rho}}$ steps. The largest column increases by at most $\frac{C_1}{c_{\rho}}+3C_1$. 
\end{proof}

\begin{proof}[Proof of Proposition \ref{v1 decay}] 

\begin{claim} 
Under $S$, whenever a term enters the critical position it means that it has decreased by at least half since it was last in the critical position. 
\end{claim}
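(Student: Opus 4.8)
The plan is to follow the position and value of a single coordinate (a ``term'') under iteration of $S$ on the absorbing cone $\Omega=\Omega_d$, reading off its dynamics from the two branches of $S$ (see (\ref{selmer matrix})):
\[
S(\bar x)=(x[1],\,x[d]-x[1],\,x[2],\dots,x[d-1])\quad\text{on }\Omega^{(1)}=\{\bar x\in\Omega:\ x[d]\ge 2x[1]\},
\]
\[
S(\bar x)=(x[d]-x[1],\,x[1],\,x[2],\dots,x[d-1])\quad\text{on }\Omega^{(2)}=\{\bar x\in\Omega:\ 2x[1]>x[d]\}.
\]
First I would record the combinatorics of a single step: the term occupying position $d$ moves to position $2$ on $\Omega^{(1)}$ and to position $1$ on $\Omega^{(2)}$, and it is the only term whose value changes, its value dropping from $x[d]$ to $x[d]-x[1]$; every other term keeps its value and moves from its position $j$ to position $j+1$, the sole exception being that on $\Omega^{(1)}$ the term in position $1$ stays in position $1$. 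Hence, along the trajectory $(\mathrm{pos}_k,\mathrm{val}_k)$ of a fixed term, the value $\mathrm{val}_k$ is non-increasing in $k$ and strictly decreases exactly at the steps where the term occupies position $d$, where it decreases by the current first coordinate.

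The second step is to observe that a term can \emph{enter} position $1$ — occupy it at time $k$ but not at time $k-1$ — only by occupying position $d$ at time $k-1$ with $\bar x^{(k-1)}\in\Omega^{(2)}$: by the step combinatorics, position $1$ is a fixed point of the position map on $\Omega^{(1)}$, on $\Omega^{(2)}$ the term arriving in position $1$ is precisely the one that was in position $d$, and any term in a position $2\le j\le d-1$ moves to position $j+1\ge 3$. At such a step the term's value $w$ equals the largest coordinate $x[d]$ of $\bar x^{(k-1)}$ (it lies in position $d$), so the branch condition $2x[1]>x[d]=w$ forces $x[1]>\frac{w}{2}$, and therefore the term's new value $w-x[1]$ is strictly less than $\frac{w}{2}$.

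To conclude, fix a time $k$ at which a term enters position $1$ and let $w=\mathrm{val}_{k-1}$ be its value just before. If $k'<k$ is the last time that term occupied position $1$, then by the monotonicity from the first step $w\le\mathrm{val}_{k'}$; if the term had not previously occupied position $1$, the same monotonicity gives $w\le(\text{its initial value})$. In either case the term's value at time $k$ is $w-x[1]<\frac{w}{2}\le\frac{1}{2}\,\mathrm{val}_{k'}$, so the term has lost at least half its value since it was last in the critical position, which is the claim.

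I expect the only delicate point to be the bookkeeping in the first step: reading off from (\ref{selmer matrix}) precisely which term goes to which position and that only one coordinate changes value, and in particular noticing that on $\Omega^{(1)}$ position $1$ is fixed, so that ``entering position $1$'' genuinely forces the term to have arrived from position $d$ through the $\Omega^{(2)}$ branch. The case of a term reaching position $1$ for the first time is harmless, since $w-x[1]<\frac{w}{2}$ with $w$ bounded by the term's initial value still yields the conclusion.
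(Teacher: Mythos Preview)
Your proof is correct and rests on the same key observation as the paper: a term can enter position $1$ only from position $d$ under the $\Omega^{(2)}$ branch, and that branch condition $2x[1]>x[d]$ forces the new value $x[d]-x[1]$ to drop below $\tfrac{1}{2}x[d]$. Where the paper tracks the term from the moment it \emph{leaves} the critical position, follows it once around to position $d$, and then case-splits on whether $w\geq \tfrac{1}{2}a$ (the second case being handled by the informal remark that ``$a$ must decrease to be smaller than $w$''), you instead look at the moment of \emph{re}-entry and invoke the monotonicity of the term's value to compare directly with its value at the previous visit to position $1$. This sidesteps the case analysis and the need to track the intervening dynamics of the first coordinate, giving a slightly cleaner argument; the trade-off is only that you need the (easy) global observation that a term's value is non-increasing along the orbit, which the paper's step-by-step tracking keeps implicit.
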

\begin{proof} 
Consider $(a,...,y)$ the step before it has left critical position. So  $S(a,...,y)=(y-a,a,...,z)$ and $S^{d-2}(y-a,a,...,z)=(w,...,a)$ for some $w$. If $w\geq \frac 1 2 a$ then $S(w,...,a)=(a-w,w,...,z')$ and $a-w\leq \frac 1 2 a$. If $w<\frac 1 2 a$ then $a$ must decrease to be smaller than $w$ before it enters critical position again. 
\end{proof}

 It follows that if the term in the critical position has changed at least $k$ times then one term has been in critical position at least $\frac k d $ times and the first time it is in critical position it is at most the initial minimum entry. When this entry is first in critical position it is at most the initial smallest entry. Thus the minimum entry has decreased by at least $2^{-\lfloor \frac k d \rfloor+1}$. By the Lemma \ref{leave critical} this happens with probability at least $\rho^k$ before the largest column increases by $2^kK_{\rho}^k$. For $\epsilon>2^{-\lfloor \frac k d\rfloor+1}$ and $\eta<1$ choose $\rho=\eta^{\frac 1 k}$ and $L=2^kK_{\rho}^k$.
\end{proof}

\begin{cor}\label{v12 decay}  
For any $\rho<1$, $\epsilon>0$ there exists $D_{\rho,\epsilon}:=D$ such that with probability $\rho$ there exists $i$ such that $S^i(\bar{v})_2<\epsilon v[1]$ and the largest column hasn't increased by a factor of $D$. 
\end{cor}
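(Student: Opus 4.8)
The plan is to re-run the proof of Proposition \ref{v1 decay}, carrying it one $\Omega^{(2)}$-step further so that the small value it produces in the first (critical) coordinate is shifted into the second coordinate. The relevant structural fact, read off from (\ref{selmer matrix}), is that on $\Omega^{(1)}$ one has $S(\bar x)_1=x[1]$, so the critical coordinate is untouched, whereas on $\Omega^{(2)}$ one has $S(\bar x)_1=x[d]-x[1]$ and $S(\bar x)_2=x[1]$, so the value in the critical coordinate is moved to the second coordinate while a strictly smaller value enters the critical coordinate. In particular, along any orbit the value occupying the critical coordinate stays constant until the next $\Omega^{(2)}$-step --- which is exactly the event ``the first entry leaves the critical position'' of Lemma \ref{leave critical} --- and at that step it becomes the new second coordinate.

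Given $\rho<1$ and $\epsilon>0$, I would first fix $k$ with $2^{-\lfloor k/d\rfloor+1}<\epsilon$ and set $\rho_2=\rho^{1/(k+1)}<1$. Following the proof of Proposition \ref{v1 decay} I would iterate Lemma \ref{leave critical} with parameter $\rho_2$, but performing $k+1$ successive ``leave the critical position'' stages rather than $k$. Conditionally on any past, with probability at least $\rho_2^{k+1}=\rho$ each of these $k+1$ $\Omega^{(2)}$-steps occurs before the largest column grows by a further factor $K_{\rho_2}$, so the total growth is at most $(2K_{\rho_2})^{k+1}=:D=D_{\rho,\epsilon}$. After the first $k$ of these stages the critical coordinate has ``changed'' $k$ times, so, exactly as in the proof of Proposition \ref{v1 decay}, some term $\tau$ has passed through the critical coordinate at least $\lfloor k/d\rfloor$ times, halving on each passage, and at its last such passage (which ends at or before the $(k+1)$-st $\Omega^{(2)}$-step) it sits in the critical coordinate --- i.e. as the minimum entry, since on $\Omega=\Omega_d$ the first coordinate is always the minimum --- at value at most $2^{-\lfloor k/d\rfloor+1}$ times the initial minimum entry, which equals $v[1]$. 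Since $\tau$ does not change during the intervening $\Omega^{(1)}$-steps, the very next $\Omega^{(2)}$-step moves it to the second coordinate, giving a time $j$ with $S^j(\bar v)_2\le 2^{-\lfloor k/d\rfloor+1}v[1]<\epsilon\,v[1]$ and column growth $<D$, as required.

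The one point that needs care --- and the main, essentially bookkeeping, obstacle --- is the same stopping-time issue already implicit in the proof of Proposition \ref{v1 decay}: the ``iteration'' of Lemma \ref{leave critical} must be made precise by decomposing over the countably many matrix-pasts realised at the end of each stage, applying the ``regardless of the past'' form of Lemma \ref{leave critical} on each piece, and multiplying the resulting conditional lower bounds. I would also record the minor verifications that on $\Omega=\Omega_d$ the first coordinate is indeed the minimum entry (so the halving estimate applies to the critical coordinate and to the conclusion of Proposition \ref{v1 decay}), and that the departure of $\tau$ from the critical coordinate legitimately counts as one more ``leave the critical position'' event to which Lemma \ref{leave critical} applies. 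All remaining computations are immediate from the explicit form of $S$ on $\Omega^{(1)}$ and $\Omega^{(2)}$.
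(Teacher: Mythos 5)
Your argument is correct and is exactly what the paper leaves implicit in calling this a ``Corollary'' of Proposition \ref{v1 decay}: run the $k$-fold iteration of Lemma \ref{leave critical} that proves Proposition \ref{v1 decay}, obtaining a small value in the critical (first) coordinate, and then use one further application of Lemma \ref{leave critical} to force an $\Omega^{(2)}$-step, under which $S(\bar x)_2=x[1]$ carries that small value into the second coordinate (with the column growth and the conditional probability both adjusted by one more factor, $K_{\rho_2}$ and $\rho_2$ respectively). The observations you flag as needing care --- that on $\Omega^{(1)}$ the first coordinate is fixed, that ``leaving the critical position'' is precisely an $\Omega^{(2)}$-step, and that the stopping-time bookkeeping is the same as in Proposition \ref{v1 decay} --- are the right ones, and your choice $\rho_2=\rho^{1/(k+1)}$, $D=(2K_{\rho_2})^{k+1}$ mirrors the paper's $\eta^{1/k}$, $L=2^kK_\rho^k$.
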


We now set up some notation: Let $\sigma_{\bar{v}}$ be the permutation on $d$-letters that puts 
$(v[ 1],...,v[ d-1],v[ d]-v[ 1])$ in ascending order. Let 
$\tau_{\bar{v},k}=\sigma_{S^k(\bar{v})}\circ...\circ \sigma_{\bar{v}}$. 

\begin{lem}\label{combine}
 If $S^i(\bar{v})_1+S^i(\bar{v})_2<v[ d]-v[ 1]-v[ 2]$ then $\tau_{\bar{v},j}^{-1}(1)=d $ for some $j\leq i$. Thus the entry corresponding to $v[ d]$ has been in the critical position.
\end{lem}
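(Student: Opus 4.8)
The plan is to track how the entry originally equal to $v[d]$ moves under iteration of $S$ and to show that, as long as it never reaches the critical (first) position, the total mass removed from it is exactly $S^i(\bar{v})_1 + S^i(\bar{v})_2 - v[1] - v[2]$ (or rather, the deficit between $v[d]$ and this entry is controlled by the running sums of the two smallest coordinates). First I would set up the bookkeeping: at each step $k$, write $\bar{v}^{(k)} = S^k(\bar{v})$ and note that the only coordinate that ever decreases is the one in position $d$, and it decreases by exactly the current value in position $1$ (this is visible from the matrix form (\ref{selmer matrix}): the new entry in the slot that held $v[d]$ is $v[d]-v[1]$, after which $\sigma$ permutes). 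So if we follow the ``token'' that started as $v[d]$, its value after $k$ steps is $v[d]$ minus the sum of the values that occupied position $1$ at each step during which this token was \emph{not} itself in position $1$.

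Next I would argue the contrapositive: suppose $\tau_{\bar{v},j}^{-1}(1) \neq d$ for all $j \leq i$, i.e.\ the $v[d]$-token never visits the critical position up to time $i$. Then at every step $\leq i$, the token sitting in position $1$ is one of the tokens originating from $v[1],\dots,v[d-1]$, and when such a token is in position $1$ it is subtracted from the $v[d]$-token. Since the coordinates stay in ascending order on $\Omega$ (apart from the critical slot), the value in position $1$ at step $k$ is $S^k(\bar{v})_1$. Hence the $v[d]$-token at time $i$ equals $v[d] - \sum_{k<i} S^k(\bar{v})_1 \cdot \mathbf{1}[\text{token not in pos }1]$; since the token is never in position $1$, this is $v[d]$ minus the full running sum of first coordinates. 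The key point is then to compare this with the decay established in Proposition \ref{v1 decay} and Corollary \ref{v12 decay}: because the first two coordinates shrink geometrically (in the probabilistic sense, but here we only need that the running sum of $S^k(\bar v)_1$ stays below $v[d]-v[1]-v[2]$ is what \emph{forces} the token to stay large), the $v[d]$-token remains larger than both $S^i(\bar v)_1$ and $S^i(\bar v)_2$, so it is not in positions $1$ or $2$. Conversely, the hypothesis $S^i(\bar{v})_1 + S^i(\bar{v})_2 < v[d]-v[1]-v[2]$ says precisely that not enough mass has been removed from any non-critical token for it to have displaced the $v[d]$-token out of the top two slots — which, combined with the fact that $v[d]$ starts as the largest entry, shows the token is still in a position $\geq 3$ and has never been to position $1$, a contradiction with what happens if it had left. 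I would close by noting that ``the entry corresponding to $v[d]$ has been in the critical position'' is exactly the statement $\tau_{\bar v,j}^{-1}(1)=d$.

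I expect the main obstacle to be the precise bookkeeping of which token occupies position $1$ at each step and verifying that the ascending-order structure on $\Omega$ is preserved under $S$ in the way needed — in particular handling the permutation $\sigma_{S^k(\bar v)}$ correctly so that ``the value subtracted is $S^k(\bar v)_1$'' is literally true, including the edge cases where the $v[d]$-token has itself dropped to second-smallest (so that $S^k(\bar v)_2$ is the token) versus the generic case. Getting the inequality chain $v[d] - \big(\text{running sum}\big) > \max\{S^i(\bar v)_1, S^i(\bar v)_2\}$ to follow cleanly from the hypothesis $S^i(\bar v)_1 + S^i(\bar v)_2 < v[d] - v[1] - v[2]$ will require carefully peeling off the contributions of $v[1]$ and $v[2]$ themselves from the running sums, since those two initial coordinates are ``used up'' early; I would handle this by induction on $i$, strengthening the inductive hypothesis to track the token's value explicitly rather than just its position.
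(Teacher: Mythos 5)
The central bookkeeping claim in your plan is incorrect: you assert that the $v[d]$-token's value after $k$ steps is $v[d]$ minus the sum of the first coordinates ``at each step during which this token was not itself in position $1$.'' Under $S$ the token in position $d$ is the one that loses mass (it loses exactly the current first coordinate), so the $v[d]$-token is reduced only when it sits in position $d$, not whenever it is outside position $1$ --- and for $d\geq 3$ these conditions are very different. The paper's proof rests on a sharper, purely algebraic fact that your plan never reaches: as long as $\tau^{-1}_{\bar v,j}(1)\neq d$ for all $j\leq k(d-1)$, the quantity $S^{m(d-1)}(\bar v)_1+S^{m(d-1)}(\bar v)_2-S^{m(d-1)}(\bar v)_d$ is conserved and equals $v[1]+v[2]-v[d]$ (verified step by step using the explicit matrix form (\ref{selmer matrix}) and induction over blocks of $d-1$ iterations). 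Combined with $S^{m(d-1)}(\bar v)_d\geq 0$, this immediately forces $S^{m(d-1)}(\bar v)_1+S^{m(d-1)}(\bar v)_2\geq v[1]+v[2]-v[d]$, which is the clean contrapositive. Your running-sum inequality chain has no analogous invariant to close it.

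A second problem is that your argument appeals to Proposition~\ref{v1 decay} and Corollary~\ref{v12 decay}. Those are probabilistic decay statements, and in the paper's logic they sit \emph{downstream} of Lemma~\ref{combine} (they are combined with it in the proof of Lemma~\ref{d critical}); the present lemma is a deterministic combinatorial identity about a single orbit and must not invoke them. Relatedly, after writing ``the hypothesis $\dots<v[d]-v[1]-v[2]$ says precisely that not enough mass has been removed $\dots$ so the token is still in a position $\geq 3$,'' you are arguing toward the \emph{negation} of the desired conclusion. The intended hypothesis (as actually used later in Lemma~\ref{d critical}, the stated version being a sign typo on $\Omega_d$ where $v[d]\leq v[1]+v[2]$) is $S^i(\bar v)_1+S^i(\bar v)_2<v[1]+v[2]-v[d]$, and the point is that this bound is incompatible with the conserved quantity unless the $v[d]$-token has already visited position $1$. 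Redo the bookkeeping around position $d$ rather than position $1$, establish the invariant, and drop the appeals to the probabilistic decay results.
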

\begin{proof}
We first claim that if $\tau^{-1}_{\bar{v},j}(1)\neq d$ for $j=1$ and so every $j\leq d-1$ then 
$$
S^{d-1}(\bar{v})_1+S^{d-1}(\bar{v})_2-S^{d-1}(\bar{v})=v[ 1]+v[ 2]-v[ d].
$$
 This follows because under our assumption 
 $$
S^{d-1}(\bar{v})_1+S^{d-1}(\bar{v})_2=v[ 2] \text{ and } {S^{d-1}(\bar{v})_d=v[ d]-v[ 1]}.
$$ 
To see that $S^{d-1}(\bar{v})_1+S^{d-1}(\bar{v})_2=v[ 2]$ observe that 
 $$
(S^{d-1}(\bar{v})_1,S^{d-1}(\bar{v})_2)=(v[ 2]-S^{d-2}(\bar{v})_1,S^{d-2}(\bar{v})_1) \text{ or }(S^{d-2}(\bar{v})_1,v[ 2]-S^{d-2}(\bar{v})_1).
$$
By induction if  $\tau^{-1}_{\bar{v},j}(1)\neq d$ for $j\leq k(d-1)$ then 
$$
S^{d-1}(\bar{v})_1+S^{d-1}(\bar{v})_2-S^{d-1}(\bar{v})_d=v[ 1]+v[ 2]-v[ d].
$$
  So if $S^{k(d-1)}(\bar{v})_1+S^{k(d-1)}(\bar{v})_2< v[ 1]+v[ 2]-v[ d]$, then  $\tau^{-1}_{\bar{v},j}(1)= d$ for some ${j\leq k(d-1)}$.
 \end{proof}
\begin{lem}\label{jacobian bound} 
If $(u_0,u_2,u_3,...,u_d)$ and $(u_0,w_2,w_3,...,w_d) \in \Omega_1$ then $\frac{u_i}{w_i}\in [\frac 1 {4} ,4]$ for all $i\geq 2$.
\end{lem}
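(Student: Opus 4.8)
The plan is to exploit the defining inequalities of $\Omega_d$ to pin each coordinate $v_i$ (for $i\ge 2$) to a single reference coordinate, and then use the fact that the two vectors share their first entry — hence, lying on $\Delta$, have equal sums over the remaining coordinates — to compare them term by term.

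First I would record the elementary observation that for any $\bar v=(v_0,v_2,\dots,v_d)\in\Omega_1$ one has $v_2\le v_i\le 2v_2$ for all $i\ge 2$: the lower bound is just the monotonicity $v_2\le v_3\le\cdots\le v_d$ built into $\Omega_d$, and the upper bound follows from $v_i\le v_d\le v_0+v_2\le 2v_2$, using $v_0\le v_2$. In particular $\sum_{i=2}^d v_i$ lies between $(d-1)v_2$ and $2(d-1)v_2$. Next, since $(u_0,u_2,\dots,u_d)$ and $(u_0,w_2,\dots,w_d)$ both lie in $\Delta$ and agree in the first coordinate, $\sum_{i=2}^d u_i=\sum_{i=2}^d w_i=1-u_0$. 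Combining, $(d-1)u_2\le 1-u_0\le 2(d-1)u_2$ and likewise for $w_2$, so $u_2$ and $w_2$ both lie in $[\tfrac{1-u_0}{2(d-1)},\tfrac{1-u_0}{d-1}]$, whence $u_2/w_2\in[\tfrac12,2]$. Finally, for any $i\ge 2$, $u_i\le 2u_2$ together with $w_i\ge w_2$ gives $u_i/w_i\le 2u_2/w_2\le 4$, and symmetrically $u_i\ge u_2$ with $w_i\le 2w_2$ gives $u_i/w_i\ge u_2/(2w_2)\ge\tfrac14$, which is the claim. (The case $d=2$ is degenerate and trivial, since then $u_2=1-u_0=w_2$.)

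There is no genuine obstacle here; the only point deserving a moment's care is ensuring the reference coordinate is uniformly comparable across the two vectors, which is exactly what the shared first entry delivers via the simplex normalization — without that hypothesis the ratio could only be controlled in terms of $d$. The argument uses nothing beyond the definitions of $\Omega_d$ and $\Delta$.
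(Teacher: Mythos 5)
Your argument is correct, but it takes a genuinely different route from the paper's. You pivot through the second coordinate: first $v_2 \le v_i \le 2v_2$ inside $\Omega_1$, then the shared first entry $u_0$ forces $\sum_{i\ge 2} u_i = \sum_{i\ge 2} w_i$, which pins $u_2/w_2 \in [\tfrac12,2]$, and the two comparisons are combined. The paper instead bounds each coordinate of a vector in $\Omega_1$ directly and uniformly, with no reference to the other vector at all: for any $\bar v \in \Omega_1$ and $i\ge 2$ one has $\tfrac{1}{2d} \le v_i \le \tfrac{2}{d+1}$ (the upper bound because $v_d \le v_1+v_2 \le 2v_2$ and the coordinates are nondecreasing and sum to $1$; the lower bound because $v_i \ge v_2 \ge v_d/2 \ge 1/(2d)$), which gives $u_i/w_i \in [\tfrac{d+1}{4d}, \tfrac{4d}{d+1}] \subset (\tfrac14,4)$ for \emph{any} pair of vectors in $\Omega_1$. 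So the paper's bound is slightly tighter, and — contrary to your closing remark — it shows that the shared-first-coordinate hypothesis is actually superfluous for a $d$-independent bound: the normalization on $\Delta$ together with the $\Omega_d$ constraints already pins every $v_i$ ($i\ge2$) to within a constant factor of $1/d$. Your use of the shared entry is a perfectly valid shortcut given the hypothesis as stated, but it is not the load-bearing fact it appears to be.
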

\begin{proof}
The largest $u_d,w_d$ can be is $\frac{2}{d+1}$ (if $u_i=\frac 1 {d+1}$ for all $i<d$). The smallest $u_i,w_i$ can be is $\frac 1 {2d}$ for $i>1$. So all entries are between $\frac{1}{2d}$ and $\frac{2}{d+1}$.
\end{proof}
 Let $x$ be fixed and  define
$$
\Lambda(x):=\{\bar{u}=(u[1],...,u[d ])\in \Omega_1:u[1 ]=x\}.
$$

Let $\lambda_{d-2}$ be the natural $(d-2)$-dimensional volume on each $\Lambda(x)$.

\begin{cor}\label{vd not big} 
There exists a constant $r>0$ such that for any $M$ 
$$
p(\{\bar{v}\in \Omega_1: v_d>v[1]+v[2]-rv[1]\}|M)<\frac 1 2.
$$ 
\end{cor}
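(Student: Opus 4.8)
The plan is to slice $\Omega_1$ by the value of the first coordinate: conditional on that value, the density of $p(\cdot\mid M)$ has multiplicative oscillation bounded by a constant depending only on $d$ (this is where Lemma \ref{jacobian bound} enters and where uniformity in $M$ is gained), while an elementary convexity estimate shows that the ``bad'' set occupies only an $O(r)$ fraction of each slice.

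First I would record that, by \eqref{jac estimate}, the measure $p(\cdot\mid M)$ is, up to its normalizing constant, the measure on $\Omega_1$ with density $\rho(v)=\bigl(C_1(M)v[1]+\cdots+C_d(M)v[d]\bigr)^{-d}$ with respect to $\lambda_0$. Since $v[1]\le 1/d$ on $\Omega_1$, decompose $\Omega_1$ into the affine slices $\Lambda(x)=\{v\in\Omega_1:v[1]=x\}$ for $x\in(0,1/d]$. The map $v\mapsto v[1]$ is affine on $\Delta$, so by the coarea formula its level sets are the $\Lambda(x)$ with a constant transversal Jacobian, and hence
$$p(\mathrm{bad}\mid M)\ \le\ \Bigl(\sup_{0<x\le 1/d}\ \frac{\int_{\{v\in\Lambda(x):\,v[d]>v[1]+v[2]-rv[1]\}}\rho\,d\lambda_{d-2}}{\int_{\Lambda(x)}\rho\,d\lambda_{d-2}}\Bigr)\,p(\Omega_1\mid M).$$
Thus it suffices to bound the bracketed slicewise ratio by a constant times $r$, with the constant depending only on $d$. (The $x$ for which $\lambda_{d-2}(\Lambda(x))=0$ form a null set and may be ignored, and on a fixed slice $v[1]=x$, so the defining inequality of the bad set becomes the constant‑threshold condition $f(v):=v[1]+v[2]-v[d]<rx$.)

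\emph{Density oscillation on a slice.} On $\Lambda(x)$ one has $\rho(v)=\bigl(C_1(M)x+\sum_{i\ge2}C_i(M)v[i]\bigr)^{-d}$. By Lemma \ref{jacobian bound}, any $v,w\in\Lambda(x)$ satisfy $v[i]/w[i]\in[\tfrac14,4]$ for $i\ge2$, so $\sum_{i\ge2}C_i(M)v[i]$ and $\sum_{i\ge2}C_i(M)w[i]$ differ by a factor in $[\tfrac14,4]$; adding the common nonnegative term $C_1(M)x$ only moves the ratio of the bases toward $1$, whence $\rho(v)/\rho(w)\in[4^{-d},4^{d}]$. Consequently $\sup_{\Lambda(x)}\rho\le 4^{d}\inf_{\Lambda(x)}\rho$, uniformly in $M$ and $x$.

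\emph{The bad part of a slice is small.} On $\Omega_1$ one has $0\le f\le v[1]$: the left inequality is the defining constraint $v[d]\le v[1]+v[2]$ of $\Omega$, and the right is $v[d]\ge v[2]$. The point $p^\ast=\bigl(x,\tfrac{1-x}{d-1},\dots,\tfrac{1-x}{d-1}\bigr)$ lies in $\Lambda(x)$ (using $x\le 1/d$) and has $f(p^\ast)=x$. For any $v\in\Lambda(x)$ the convex combination $(1-r)v+rp^\ast$ again lies in $\Lambda(x)$ and, by linearity of $f$, has $f$‑value $(1-r)f(v)+rx\ge rx$; hence the scaled copy $(1-r)\Lambda(x)+rp^\ast$ is contained in $\{f\ge rx\}\cap\Lambda(x)$ and has $\lambda_{d-2}$‑measure $(1-r)^{d-2}\lambda_{d-2}(\Lambda(x))$. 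Therefore
$$\lambda_{d-2}\bigl(\{f<rx\}\cap\Lambda(x)\bigr)\ \le\ \bigl(1-(1-r)^{d-2}\bigr)\lambda_{d-2}(\Lambda(x))\ \le\ (d-2)\,r\,\lambda_{d-2}(\Lambda(x)).$$
Combining the two ingredients, the slicewise ratio is at most $4^{d}(d-2)r$, so $p(\mathrm{bad}\mid M)\le 4^{d}(d-2)r$, and choosing $r=\bigl(2\cdot 4^{d}(d-2)\bigr)^{-1}$ — which depends only on $d$ — gives $p(\mathrm{bad}\mid M)<\tfrac12$ for every $M$.

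The main obstacle, really the only point requiring care, is obtaining a bound uniform over the past $M$; the slicing is precisely what delivers this, since after conditioning on the shared first coordinate $x$ the term $C_1(M)x$ is common to every point of the slice and the remaining linear form has bounded multiplicative variation there by Lemma \ref{jacobian bound}, making the density distortion factor $4^{d}$ regardless of the $C_i(M)$. The geometric factor $(d-2)r$ is purely combinatorial. The routine measure‑theoretic points (null set of degenerate slices, the constant‑threshold reduction on each slice, the constant transversal Jacobian in the coarea step) are immediate.
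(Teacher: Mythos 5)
Your proof is correct and follows the same route as the paper: slicing by the first coordinate, using Lemma \ref{jacobian bound} together with \eqref{jac estimate} and Fubini to get $p(U\mid M)\le 4^d\,\sup_x \lambda_{d-2}(\Lambda(x)\cap U)/\lambda_{d-2}(\Lambda(x))$, and then estimating the bad fraction of each slice. The paper states the $4^d$ slicewise inequality and ends with ``the corollary follows''; your convexity argument with the center $p^\ast$ supplies exactly the elementary geometric estimate that the paper leaves implicit.
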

\begin{proof} 
By Lemma \ref{jacobian bound}, Equation (\ref{jac estimate}) and Fubini's Theorem we have that if
$U\subset \Omega_1$ then $p(U|M)<4^d\, \underset{x}{\sup}\frac{\lambda_{d-2}(\Lambda(x)\cap U)}{\lambda_{d-2}(\Lambda(x))}$. The corollary follows.
\end{proof}

\begin{lem}\label{d critical} 
There exists a constant $\kappa$ so that the conditional probability, given any fixed past matrix $N$, that $\tau_{\bar{v},j}^{-1}(1)=d$ for some $j$ where $ C_{\max}(NM(\bar{v},j))<\kappa C_{max}(N)$ is at least a quarter independent of $N$.
\end{lem}
\begin{proof}[Proof of Lemma \ref{d critical}]  
By Corollary \ref{vd not big}
$$
{p(\{\bar{v}\in \Omega_1: v[d]<v[1]+v[2]-rv[1]}\}|M)\geq \frac 1 2.
$$ 
Now by Corollary \ref{v12 decay} with $\rho=\frac 3 4$ and $\epsilon=\frac r {2}$ there is at least a three quarters chance that the sum of the first and second entries is at most $\frac{r}{2} v[1]$ before the largest column has increased by a factor of $D_{\frac 3 4 , \frac{r}{2}}$. By Lemma \ref{combine} if  $S^i(\bar{v})_1+S^i(\bar{v})_2< v[1]+v[2]-v[d]$ then $\tau_{\bar{v},j}^{_1}(1)=d$ for some $j\leq i$.  Combining these facts, there is at least a $\frac 1 2 -\frac 1 4 =\frac 1 4$ chance that the entry corresponding to $\bar{v}_d$ has entered the critical position before the largest column has increased by a factor of $D_{\frac 3 4 , \frac r {2}}$.
\end{proof}

\begin{proof}[Proof of Proposition \ref{selmer balanced general}] This follows by iterating Lemma \ref{d critical}.
\end{proof}

This concludes the proof of Corollary \ref{exact2}.

\section{Others homogeneous algorithms}

Next we give examples  of  others $d$-dimensional   generalizations of (\ref{E1}). 

\subsection{Jacobi-Perron algorithm}

Nogueira \cite{nog}   introduces  a way to study the Jacobi-Perron algorithm using  a {\it tagged subtractive algorithm}.
For simplicity we define a subtractive algorithm  in the  sub-cone  
$$\Gamma_d=\{\bar x=(x[1]  ,\ldots, x[d] ) \in \mathbb{R}_+^d: x[d] = \max_i x[i]  \}.$$ For $\bar x \in \Gamma_d$, let $i_0=i(\bar x)=\min \{i \geq 2: x[i] >x[1] \}$
 and
$$
\mathcal{J}(\bar x)= 
\left\{ \begin{array}{ll}         
(x[1] ,\ldots, x[i_0] -x[1] ,\ldots, x[d] ), & \mbox{if $i_0 < d$,} \\
(x[1]  ,\ldots, x[d-1] , x[d] -x[1] ), & \mbox{if $i_0 =d,x[d] >2x[1] $,} \\
 (x[2]  ,\ldots, x[d-1] , x[d] -x[1] ,x[1] ), & \mbox{if $i_0 =d,x[d] <2x[1] $.}          
         \end{array}
\right.
$$
The {\it homogeneous Jacobi-Perron algorithm} (see \cite{sc},  p. 24) is the map $\Gamma_d \to \Gamma_d$ given by, for every $ \bar x \in \Gamma_d$,
$$
\bar x \mapsto\mathcal{I}^{k(\bar x)}(\bar x)=\left(x[2]- \left[ \frac{x[2]}{x[1]}\right] x[1],\ldots,  x[d]- \left[ \frac{x[d]}{x[1]}\right] x[1],x[1]\right),
$$
where $\displaystyle k(\bar x) =\sum_{i=2}^d \left[ \frac{x[i]}{x[1]}\right]$ and $[x]$ means the integer part of the real number $x$.

\medskip

\subsection{Multidimensional continued fractions with absorbing sets}

For completeness sake next we give examples of homogeneous multidimensional continued fraction algorithms which are not ergodic. 

The first map is the Poincar\'e multidimensional algorithm which was introduced by Poincar\'e \cite{Poin} 
$$
\mathcal{J}: \bar x \in \Lambda_d \mapsto \sigma_{\bar x}(x[1],x[2]-x[1],\ldots,x[d]-x[d-1])\in \Lambda_d,
$$
where $\Lambda_d=\{\bar x \in  \mathbb{R}_+^d: x[1] \leq \ldots \leq x[d]\}$ and $\sigma_{\bar x}$ is the permutation which arranges $x[1],x[2]-x[1],\ldots,x[d]-x[d-1]$ in ascending order. This map corresponds to  nice geometrical interpretation of the one-dimensional continued fraction which was extended to $\mathbb{R}_+^d$. The aim of Poincar\'e was to derive good diophantine approximation from his algorithm. In Nogueira \cite{poin} it is proved that already for $d=3$ for Lebesgue almost every $\bar x$ in $\Omega_3$, there exists a positive  function $s(\bar x)>0$ such that
$$
\lim_{k \rightarrow \infty} \mathcal{J}^k(\bar x)=(0,0, s(\bar x))
$$
which implies that the algorithm does not give diophantine approximation and is not ergodic.
There it is conjectured that $\mathcal{J}$ is ergodic if, and only if, $d$ is even.

The second example arises from a model in Percolation Theory and the multidimensional algorithm is defined by
$$
\mathcal{J}: \bar x \in \Lambda_d \mapsto \sigma_{\bar x}(x[1],x[2]-x[1],\ldots,x[d]-x[1])\in \Lambda_d.
$$
In \cite{km}, Kraaikamp and Mester prove that, for $d\geq3$, for Lebesgue almost every $\bar x\in\Lambda_d$, there exists a positive  function $p(\bar x)>0$ such that
$$
\lim_{k \rightarrow \infty} \mathcal{J}^k(\bar x)=(0,\ldots, 0, p(\bar x)),
$$
where $p(\bar x)$ is related to the critical probability of the percolation model, therefore the map is not ergodic.\\

\noindent
{\bf Acknowledgements.}\\
We would like to thank Tomasz Miernowski for fruitful discussions.
The first named author (J.C.) was supported by CODY and NSF grant DMS-1004372 while working on this project. He would also like to thank for hospitality the Institut de Math\'ematiques de Luminy, Marseilles, where this work was completed.

\end{document}